\newtheorem{theorem}{Theorem}[section]
\newtheorem{lemma}[theorem]{Lemma}
\newtheorem{prop}[theorem]{Proposition}
\theoremstyle{definition}
\newtheorem{defn}[theorem]{Definition}
\newtheorem{remark}[theorem]{Remark}
\numberwithin{equation}{section}
\newcommand{\primecf}{\mathbbm{1}_{\mathbb{P}}}
\begin{document}

\title{Correlations of Almost Primes}
\author{Natalie Evans}
\address{Department of Mathematics, King's College London, Strand, London, WC2R 2LS, UK}
\email{natalie.evans@kcl.ac.uk}
\date{\today}

\maketitle

\begin{abstract}
We prove that analogues of the Hardy-Littlewood generalised twin prime conjecture for almost primes hold on average. Our main theorem establishes an asymptotic formula for the number of integers $n=p_1p_2 \leq X$ such that $n+h$ is a product of exactly two primes which holds for almost all $|h|\leq H$ with $\log^{19+\varepsilon}X\leq H\leq X^{1-\varepsilon}$, under a restriction on the size of one of the prime factors of $n$ and $n+h$. Additionally, we consider correlations $n,n+h$ where $n$ is a prime and $n+h$ has exactly two prime factors, establishing an asymptotic formula which holds for almost all $|h| \leq H$ with $X^{1/6+\varepsilon}\leq H\leq X^{1-\varepsilon}$.
\end{abstract}

\section{Introduction}
\label{sec:introduction}

The generalised twin prime conjecture states that for any integer $k\geq1$ there are infinitely many primes $p$ such that $p+2k$ is also a prime. Hardy and Littlewood \cite{hardylittlewood} conjectured that the number of primes $p\in(X,2X]$ such that $p+2k$ is prime is 
\begin{equation}\label{eq:hardylittlewood}
\sim\frac{\mathfrak{S}(2k)X}{\log^2X}
\end{equation}
as $X\rightarrow\infty$, where $\mathfrak{S}(h)$ is the singular series defined by
\begin{equation}\label{eq:singularseries}
\mathfrak{S}(h):=2\Pi_2\prod_{\substack{p\mid h\\ p>2}}\frac{p-1}{p-2}
\end{equation}
if $h$ is an even integer and zero if $h$ is odd. Here $\Pi_2:=\prod_{p>2}\left(1-\frac{1}{(p-1)^2}\right)$ is the twin prime constant. The Hardy-Littlewood conjecture \eqref{eq:hardylittlewood} is equivalent to showing for any fixed non-zero integer $h$ that
\begin{equation}\label{eq:primecorrelation}
\frac{1}{X}\sum_{X<n\leq2X}\primecf(n)\primecf(n+h)\sim\mathfrak{S}(h)\Bigg(\frac{1}{X}\sum_{X<n\leq2X}\primecf(n)\Bigg)^2,
\end{equation}
where $\primecf$ is the indicator function of the primes, as $X\rightarrow\infty$. While the Hardy-Littlewood conjecture remains wide open it is known to be true on average. In particular, for any fixed $A>0$ we have
\[
\sum_{|h|\le H}\Bigg|\frac{1}{X}\sum_{X<n\leq2X}\primecf(n)\primecf(n+h)-\mathfrak{S}(h)\Bigg(\frac{1}{X}\sum_{X<n\leq2X}\primecf(n)\Bigg)^2\,\Bigg|^2=O\Bigg(\frac{H}{\log^{A+2}X}\Bigg),
\]
so by Chebyshev's inequality we have that \eqref{eq:primecorrelation} holds for all but at most $O_A(H\log^{-A}X)$ values of $|h|\leq H=H(X)$. Mikawa \cite{mikawa} proved that if $X^{1/3+\varepsilon}\leq H\leq X^{1-\varepsilon}$, then for all but at most $O_{\varepsilon,A}(H\log^{-A}X)$ values of $|h|\leq H$ we have that \eqref{eq:primecorrelation} holds. Matom\"aki, Radziwi{\l}{\l} and Tao \cite{mrt}
\footnote{Mikawa proved his result (in his notation) also in the range $X^{1-\varepsilon}\leq H\leq X$. Matom\"aki, Radziwi{\l}{\l} and Tao note that their result can also be proved in this range by their methods. Both results are also proved with a better error term.} 
improved this range, showing that if $0\leq h_0\leq X^{1-\varepsilon}$ and $X^{8/33+\varepsilon}\leq H\leq X^{1-\varepsilon}$ then \eqref{eq:primecorrelation} holds for all but $O_{\varepsilon,A}(H\log^{-A}X)$ values of $h$ such that $|h-h_0|\leq H$.

In this paper we establish an analogue of the Hardy-Littlewood conjecture for integers which have exactly two prime factors (called $E_2$ numbers) which holds on average, provided we restrict the size of one of the prime factors. Given $P>0$ and fixed $\delta>0$ we define $E'_2:=E'_2(P)$ to be the set of integers $n=p_1p_2$ with exactly two prime factors such that $p_1\in(P,P^{1+\delta}]$. The presence of the two prime factors gives the problem a bilinear structure which enables us to go further and we show an asymptotic formula for the correlation 
\[
\frac{1}{X}\sum_{X<n\leq2X}\mathbbm{1}_{E'_2}(n)\mathbbm{1}_{E'_2}(n+h),
\]
where $\mathbbm{1}_{E'_2}$ is the indicator function of the set $E'_2$, which holds for almost all $|h|\leq H$ with $\log^{19+\varepsilon}X\leq H\leq X\log^{-A}X$ and $A>3$.  

\begin{theorem}\label{thm:mainthm}
Let $\varepsilon>0$, $A>3$ be fixed and let $\log^{19+\varepsilon}X\leq H\leq X\log^{-A}X$. Then, there exists some $\eta=\eta(\varepsilon)>0$ such that 
\begin{equation}\label{eq:mainresult}
\frac{1}{X}\sum_{X<n\leq 2X}\mathbbm{1}_{E'_2}(n)\mathbbm{1}_{E'_2}(n+h)\sim\mathfrak{S}(h)\Bigg(\frac{1}{X}\sum_{X<n\leq2X}\mathbbm{1}_{E'_2}(n)\Bigg)^2
\end{equation}
holds for all but at most $O(H\log^{-\eta} X)$ values of $0<|h|\leq H$. Here we define
\[
P:=
\begin{cases}
\log^{17+\varepsilon}X,&\text{ if }\log^{19+\varepsilon}X\leq H\leq \exp\big((\log X)^{\varepsilon^3}\big),\\
\exp\big((\log\log X)^2\big),&\text{ if }\exp\big((\log X)^{\varepsilon^3}\big) < H\leq X\log^{-A}X.
\end{cases}
\]
\end{theorem}

\begin{remark}
The range $X\log^{-A}X\le H\le X$ can also be dealt with by the same methods, see for example \cite{mikawa}, \cite{mrt}. The smallest possible choice of $H$ in the above is $H=\log^{19+\varepsilon}X$, however it may be possible to lower this exponent. In the proof of Theorem \ref{thm:mainthm} we apply the argument of Ter\"av\"ainen \cite[Sections 2-4]{teravainen} showing that almost all intervals $[x,x+\log^{5+\varepsilon}x]$ contain an integer which has exactly two prime factors. The second half of Ter\"av\"ainen's paper is dedicated to lowering the exponent $5+\varepsilon$ to $3.51$ through an argument additionally using some sieve theory and the theory of exponent pairs. We do not apply these ideas here, but it is possible that adapting some aspects of this argument to our proof could lower the exponent of $H$.
\end{remark}

We can prove a similar asymptotic formula for correlations of general $E_2$ numbers which holds on average using the same methods. Making some adjustments to the proof of Theorem \ref{thm:mainthm}, we obtain an asymptotic formula for correlations $n,n+h\in E_2$ which holds for almost all $|h|\leq H$. The cost of considering the set of $E_2$ numbers is taking $H$ larger than in the previous theorem, although we still go beyond what is known for primes.

\begin{theorem}\label{thm:unrestricted}
Let $\varepsilon>0$, $B>0$, $A>3$ be fixed and let $\exp\big((\log X)^{1-\varepsilon}\big)\leq H\leq X\log^{-A}X$. Then, we have that 
\[
\frac{1}{X}\sum_{X<n\leq 2X}\mathbbm{1}_{E_2}(n)\mathbbm{1}_{E_2}(n+h)\sim\mathfrak{S}(h)\Bigg(\frac{1}{X}\sum_{X<n\leq2X}\mathbbm{1}_{E_2}(n)\Bigg)^2
\]
for all but at most $O(H\log^{-B} X)$ values of $0<|h|\leq H$.
\end{theorem}

We can also combine our argument with the work of Mikawa \cite{mikawa} on correlations of primes to study correlations $n, n+h$ where $n$ is a prime and $n+h$ is an $E_2$ number on average. We are still able to take advantage of the bilinear structure provided by the almost prime to go further than what is known for primes and prove an asymptotic formula which holds for almost all $|h|\leq H$ with $H$ as small as $X^{1/6+\varepsilon}$.

\begin{theorem}\label{thm:primealmost}
Let $\varepsilon>0$ be fixed sufficiently small, $B>0$, $A>5$ be fixed and let $X^{1/6+\varepsilon}\leq H\leq X\log^{-A}X$. Then, we have that 
\[
\frac{1}{X}\sum_{X<n\leq2X}\primecf(n)\mathbbm{1}_{E_2}(n+h)\sim\mathfrak{S}(h)\Bigg(\frac{1}{X}\sum_{X<n\leq2X}\primecf(n)\Bigg)\Bigg(\frac{1}{X}\sum_{X<m\leq2X}\mathbbm{1}_{E_2}(m)\Bigg)
\]
for all but at most $O(H\log^{-B} X)$ values of $0<|h|\leq H$.
\end{theorem}

\subsection{Previous Works}

Before outlining the proofs of our results we first discuss some previous results on primes and almost primes which are proved using sieve methods.

Chen's theorem gives that $p+2=q$ such that $p$ is prime and $q$ is either a prime or a product of two primes holds for infinitely many primes $p$. Debouzy \cite{debouzy} proved under the Elliott-Halberstam conjecture that given any $0\leq\beta<\gamma$ there exists $X_0$ such that for all $X\geq X_0$ we have that
\[
\sum_{n\leq X}\Lambda(n)\Lambda(n+2)+\frac{1}{\gamma-\beta}\sum_{n\leq X}\Lambda(n+2)\sum_{\substack{d_1d_2=n\\n^\beta\leq d_1\leq n^\gamma}}\frac{\Lambda(d_1)\Lambda(d_2)}{\log n}=2\Pi_2X(1+o(1)).
\]
This result is proved using an improvement of the Bombieri asymptotic sieve. The Elliott-Halberstam conjecture \cite{elliotthalberstam} (see also \cite{davenport,opera}) concerns the distribution of primes in arithmetic progressions and states that for every $A>0$ and $0<\theta<1$ we have that
\[
\sum_{q\leq x^\theta}\max_{(a,q)=1}\left|\psi(x;q,a)-\frac{x}{\varphi(q)}\right|\ll\frac{x}{\log^Ax},
\]
where we define $\psi(x;q,a):=\sum_{n\leq x , n\equiv a(q)}\Lambda(n)$.

More generally, Bombieri \cite{bombieri} had previously considered pairs $P_k$ and $P_k+2=p$ with $p$ prime and $P_k$ an almost prime with at most $k$ factors. More precisely, defining $\Lambda_k(n):=(\mu\ast\log^k)(n)$ to be the generalised von Mangoldt function where $\ast$ denotes Dirichlet convolution, Bombieri proved that if $k\geq1$ is an integer and $x\geq x_0(k)$ we have
\[
\sum_{n\leq X}\Lambda_k(n)\Lambda(n+2)=2\Pi_2X(\log X)^{k-1}(k+O(k^{4/3}2^{-k/3}))
\]
and, assuming the Elliott-Halberstam conjecture, for $k\geq 2$ we have the asymptotic
\[
\sum_{n\leq X}\Lambda_k(n)\Lambda(n+2)\sim 2\Pi_2kX(\log X)^{k-1}.
\]

There are a number of results regarding bounded gaps between the primes; Zhang \cite{zhang} proved that
\begin{equation}\label{eq:zhangbound}
\liminf_{n\rightarrow\infty}(p_{n+1}-p_n)<7\times 10^7
\end{equation}
and in particular that there exist infinitely many bounded gaps between the primes. Maynard \cite{maynard} improved the above bound to $600$, while the Polymath 8b \cite{polymath} project subsequently improved this to $246$. Under the generalised Elliott-Halberstam conjecture, the best known bound is 6. The twin prime conjecture would amount to proving the above result with the bound $2$. Goldston, Graham, Pintz and Y{\i}ld{\i}r{\i}m \cite{ggpy} proved an almost prime analogue of \eqref{eq:zhangbound}; if $q_1<q_2<\cdots$ denotes the sequence of products of exactly two distinct primes, then 
\[
\liminf_{n\rightarrow\infty}(q_{n+1}-q_n)\leq6.
\]

Integers with exactly two prime factors cannot be counted by sieve methods due to the parity problem - even assuming the Elliott-Halberstam conjecture - and we will instead apply the circle method as in previous works on correlations of primes \cite{mrt}, \cite{mikawa}.

\subsection{Outline of the Proof}

We now discuss the main ideas of the proof of Theorem \ref{thm:mainthm}. We apply the Hardy-Littlewood circle method, first expressing the correlation 
\[
\sum_{X<n\leq2X}\mathbbm{1}_{E'_2}(n)\mathbbm{1}_{E'_2}(n+h)
\]
in terms of the integral
\begin{equation}\label{eq:HLintegral}
\int_0^1\Bigg|\sum_{X<n\leq2X}\mathbbm{1}_{E'_2}(n)e(n\alpha)\Bigg|^2e(-h\alpha)d\alpha.
\end{equation}

We split the integral \eqref{eq:HLintegral} over the unit circle into integrals over the major arcs, the set of points in $(0,1)$ which are well approximated by a rational with a small denominator, i.e. the set of $\alpha\in(0,1)$ such that $|\alpha-a/q|\leq1/(q\log^CX)$ for some integers $(a,q)=1$ with $1\leq q\leq\log^{A'}X$ for some $0<A'<C$, and the minor arcs consisting of the rest of the circle. 

In many problems of this type (see e.g. \cite{mrt}, \cite{mikawa}) where the Hardy-Littlewood circle method is applied, it is usual that the major arcs are treated in a standard way to provide the main term and an error term which is not too difficult to control, while the contribution from the minor arcs is more difficult to bound suitably. Since the correlation
\[
\sum_{X<n\leq2X}\mathbbm{1}_{E'_2}(n)\mathbbm{1}_{E'_2}(n+h)=\sum_{P<p_1,p_3\leq P^{1+\delta}}\sum_{\substack{X<p_1p_2,p_3p_4\leq2X\\p_3p_4=p_1p_2+h}}1
\]
has a bilinear structure, we are in fact able to bound the integral over the minor arcs with relative ease using standard results on bilinear exponential sums. For the major arcs, while we are still able to evaluate the main term in the usual way, the difficulty now lies in estimating the error term.

We will first treat the integral over the minor arcs. We find cancellation in the contribution on average over the shift $h$:
\[
\sum_{0<|h|\leq H}\Bigg|\int_\mathfrak{m}\Bigg|\sum_{X<n\leq2X}\mathbbm{1}_{E'_2}(n)e(n\alpha)\Bigg|^2e(-h\alpha)d\alpha\Bigg|^2.
\]
Expanding the square, applying Poisson summation and Gallagher's Lemma, we can reduce the problem to bounding an integral of the form
\[
\sup_{\alpha\in\mathfrak{m}}\int_X^{2X}\Bigg|\sum_{x<n\leq x+H}\mathbbm{1}_{E'_2}(n)e(n\alpha)\Bigg|^2dx
=\sup_{\alpha\in\mathfrak{m}}\int_X^{2X}\Bigg|\sum_{\substack{x<p_1p_2\leq x+H\\P<p_1\leq P^{1+\delta}}} e(\alpha p_1p_2)\Bigg|^2dx.
\]
The bilinear structure of these sums means we get the required cancellation, as seen in the work of Mikawa \cite{mikawa}. We apply the Cauchy-Schwarz inequality before separating the contributions of the diagonal and off-diagonal terms. The diagonal terms are bounded trivially and a standard argument for bounding bilinear exponential sums is used to bound the off-diagonal terms.  

The major arcs contribute the main term, which is evaluated in a standard way, and an error term. We expand the exponential sum in terms of Dirichlet characters, with a suitable approximation to the principal character providing the main term.

To the remaining terms in the expansion, we again apply Gallagher's Lemma to reduce the problem to understanding almost primes in almost all short intervals. We add and subtract a sum over a longer interval, so that we aim to estimate an expression of the form
\begin{equation}\label{eq:heuristicmajor}
\begin{aligned}
\sum_{q\leq(\log X)^{A'}}\frac{q}{\varphi(q)}\sum_{\substack{\chi(q)\\\chi\neq\chi_0}}\Bigg(&\int_X^{2X}\Bigg|\frac{2}{q(\log X)^C}\sum_{x<n\leq x+\frac{q(\log X)^C}{2}}\mathbbm{1}_{E'_2}(n)\chi(n) -\frac{2}{q\Delta}\sum_{x<n\leq x+\frac{q\Delta}{2}}\mathbbm{1}_{E'_2}(n)\chi(n)\Bigg|^2dx \\
+&\int_X^{2X}\Bigg|\frac{2}{q\Delta}\sum_{x<n\leq x+\frac{q\Delta}{2}}\mathbbm{1}_{E'_2}(n)\chi(n)\Bigg|^2dx\Bigg),
\end{aligned}
\end{equation}
with $\Delta$ slightly smaller than $X$. We are then able to apply Cauchy-Schwarz and what is known about primes in almost all short intervals to the second term. For the estimation of the first term, we adapt the work of Ter\"av\"ainen \cite{teravainen} on almost primes in almost all short intervals (which in turn adapts the work of Matom\"aki and Radziwi{\l}{\l} \cite{mrmult} on multiplicative functions in short intervals). In particular, we first use a Parseval-type bound in order to bound the integral in terms of the mean square of the associated Dirichlet polynomial
\[
\int_{-T}^{T}\Bigg|\sum_{X<n\leq2X}\frac{\mathbbm{1}_{E'_2}(n)\chi(n)}{n^{1+it}}\Bigg|^2dt.
\]
We then factorise this Dirichlet polynomial into a short Dirichlet polynomial corresponding to the smaller prime factor $p_1$ and a longer polynomial corresponding to the larger prime factor $p_2$. The domain of integration is split according to whether the short polynomial is pointwise small. When the shorter polynomial is small, we apply the pointwise bound followed by a mean value theorem. When this shorter polynomial is large, to get sufficient cancellation we further decompose the long Dirichlet polynomial into products of shorter polynomials using Heath-Brown's identity, reducing the problem to estimating type I and type II sums. The type I sums occur when these polynomials are sufficiently long and are in fact partial sums related to Dirichlet $L$-functions. In this case we are able to apply the Cauchy-Schwarz inequality followed by a result on the twisted fourth moment of partial sums of Dirichlet $L$-functions. Otherwise, for the type II sums, we then further split the domain according to whether one of these polynomials is small, in which case it is bounded pointwise before we use a mean value theorem. When the polynomial is large, we apply the Hal{\'a}sz-Montgomery inequality followed by large value theorems.

The proof of Theorem \ref{thm:unrestricted} also follows the argument given above, but we need to make appropriate adjustments to the parameters when applying the circle method and take more care when using the Cauchy-Schwarz inequality. On both the major and minor arcs the application of Cauchy-Schwarz to sums over the smaller prime factor is now too inefficient, but we can overcome this by splitting these sums into dyadic intervals and then combining the contributions. For the proof of Theorem \ref{thm:primealmost}, we combine these ideas for the almost primes with the work of Mikawa \cite{mikawa} on the primes.

Recently, the methods of Matom\"aki and Radziwi{\l}{\l} \cite{mrmult} have been combined with the Hardy-Littlewood circle method to make progress on other problems in analytic number theory. Matom\"aki, Radziwi{\l}{\l} and Tao \cite{mrt2} obtained short averages (of length $\log^B X$ for some large $B>0$) for correlations of divisor functions and the von Mangoldt function, at the cost of weaker error terms. Matom\"aki, Radziwi{\l}{\l} and Tao \cite{mrtchowla} use these ideas to establish that Chowla's conjecture \cite{chowla} holds on average as soon as the length of the average grows with $X$. Recent work of Lichtman and Ter\"av\"ainen  \cite{lichtmanteravainen} shows that a hybrid of Chowla's conjecture and the Hardy-Littlewood conjecture holds on average (see also \cite{lichtman}), with average of length a power of $\log X$.

\subsection{Notation}
Throughout $p,p_i,$ are used to denote prime numbers, while $k,l,m,n,q,r,v$ (with or without subscripts) are positive integers.

As usual, $\mu(\cdot)$ is the M\"obius function and $\varphi(\cdot)$ is the Euler totient function. We let $d_r(n)$ denote the number of solutions to $n=a_1\cdots a_r$ in positive integers. We let $c_q(\cdot)$ be the Ramanujan sum, defined by
\[
c_q(n):=\sum_{\substack{a=1\\(a,q)=1}}^q e\left(\frac{an}{q}\right).
\]
We write $\tau(\cdot)$ for the Gauss sum defined on Dirichlet characters $\chi$ modulo $q$ by
\begin{equation}\label{eq:gauss}
\tau(\chi):=\sum_{n=1}^qe\left(\frac{n}{q}\right)\chi(n),
\end{equation}
which satisfies $\tau(\chi_0)=\mu(q)$.

We use $e:\mathbb{T}\rightarrow\mathbb{R}$ to denote $e(x):=e^{2\pi ix}$, where $\mathbb{T}$ is the unit circle. The notation $\mathbbm{1}_S(\cdot)$ is the indicator function of the set $S$; in particular, we write $\mathbbm{1}_S(n)=1$ if $n\in S$ and $\mathbbm{1}_S(n)=0$ otherwise. Let $\|x\|:=\min_{n\in\mathbb{Z}}|x-n|$ denote distance to the nearest integer. 

We will use $(a,b)$ to denote the greatest common divisor of natural numbers $a$ and $b$, while we write $a\mid b$ if $a$ divides $b$. The shorthand $a\equiv b(q)$ is used to denote that $a$ and $b$ are congruent modulo $q$.

We use the shorthand $\chi(q)$ to denote that the summation is taken over all Dirichlet characters modulo $q$. For complex functions $g_1,g_2$ we use the usual asymptotic notation $g_1(x)=O(g_2(x))$ or $g_1(x)\ll g_2(x)$ to denote that there exist real $x_0$ and $C>0$ such that for every $x\geq x_0$ we have that $|g_1(x)|\leq C|g_2(x)|$. We write $g_1(x)=o(g_2(x))$ if for every $\varepsilon>0$ there exists $x_0$ such that $|g_1(x)|\leq \varepsilon|g_2(x)|$ for all $x\geq x_0$. We use the convention that $\varepsilon>0$ may be different from line to line.

\section{Preliminary Lemmas} 
\label{sec:preliminaries}

We now state several results we will need throughout the argument. We will need the following bound on primes $p$ such that $p+h$ is also prime and the singular series:

\begin{lemma}\label{lem:primepairs}
Let $h\le x$ be an even non-zero integer and suppose that $y\geq4$. The number of primes $p\in(x,x+y]$ such that $p+h$ is also prime is
\[
\ll\frac{\mathfrak{S}(h)y}{(\log y)^2}.
\]
Furthermore, we have that
\[
\sum_{h\leq x}\mathfrak{S}(h)\ll x.
\]
\end{lemma}

\begin{proof}
See \cite[Corollary 3.14]{montgomeryvaughan} and the subsequent exercises.
\end{proof}

We will also need Gallagher's Lemma, which will reduce bounding integrals over the major and minor arcs to studying almost primes in short intervals.

\begin{lemma}[Gallagher's Lemma]\label{lem:gallagherslemma}
 Let $2<y<X/2$. For arbitrary complex numbers $a_n$, we have
\begin{equation}\label{eq:gallagher}
\int_{|\beta|\leq\frac{1}{2y}}\left|\sum_{X<n\leq 2X}a_n e(\beta n)\right|^2d\beta
\ll\frac{1}{y^2}\int_X^{2X}\left|\sum_{x<n\leq x+y}a_n\right|^2dx+y\left(\max_{X<n\leq 2X}|a_n|\right)^2.
\end{equation}
\end{lemma}

\begin{proof}
This Lemma is a modification of \cite[Lemma~1]{gallagher} (see also \cite[Lemma~1]{mikawa}).
\end{proof}

Once we have applied Gallagher's Lemma in the treatment of the major arcs, part of the error term is reduced to a Dirichlet character analogue of a problem on primes in almost all short intervals. We will use the following result adapted from the work of Koukoulopoulos \cite{primesshortap} to bound the second term arising in \eqref{eq:heuristicmajor}:

\begin{lemma}\label{lem:primesalmostall}
Let $A\geq 1$ and $\varepsilon\in(0,\frac{1}{3}]$ be fixed. Let $X\geq1$, $1\leq Q\leq\frac{\Delta}{X^{1/6+\varepsilon}}$ and $\Delta=X^{\theta}$ with $\frac{1}{6}+2\varepsilon\leq\theta\leq 1$. Then we have that
\[
\sum_{q\leq Q}\sum_{\chi(q)}\int_X^{2X}\left|\sum_{x<n\leq x+q\Delta}\left(\Lambda(n)\chi(n)-\delta_\chi\right)\right|^2 dx
\ll\frac{Q^3\Delta^2X}{\log^AX},
\]
where we define $\delta_\chi=1$ if $\chi=\chi_0$ and $\delta_\chi=0$ otherwise.
\end{lemma}

\begin{proof}
The proof can be adapted from the proof given in \cite[Section 4]{primesshortap}. Once the contribution of the imprimitive characters has been bounded in a standard way, the main difference we need to account for when compared with \cite[Theorem 1.2]{primesshortap} is the presence of the square. To account for this, we adjust certain parameters  (namely, replace $D$ with $\sqrt{D}$ and adjust some of the powers of logarithms as needed) and do not require as many applications of the Cauchy-Schwarz inequality. We also note that in our argument we do not need the full strength of \cite[Theorem 1.2]{primesshortap}.
\end{proof}

We use the following Parseval-type result to reduce the problem of finding almost primes in short intervals (cf. the first term of \eqref{eq:heuristicmajor}) to finding cancellation in the mean square of the associated Dirichlet polynomial:

\begin{lemma}[Parseval Bound]\label{lem:dpbound}
Let $a_n$ be arbitrary complex numbers, and let $2\leq h_1\leq h_2\leq\frac{X}{T_0^3}$ with $T_0\geq1$. Define $F(s):=\sum_{X<n\leq2X}\frac{a_n}{n^s}$. Then
\begin{equation}\label{eq:parseval}
\begin{aligned}
&\frac{1}{X}\int_X^{2X}\left|\frac{1}{h_1}\sum_{x<n\leq x+h_1}a_n-\frac{1}{h_2}\sum_{x<n\leq x+h_2}a_n\right|^2dx\\
&\ll\frac{1}{T_0^2}\max_{X<n\leq2X}|a_n|^2+\int_{T_0}^{\frac{X}{h_1}}|F(1+it)|^2dt+\max_{T\geq\frac{X}{h_1}}\frac{X}{Th_1}\int_T^{2T}|F(1+it)|^2dt.
\end{aligned}
\end{equation}
\end{lemma}

\begin{proof}
This is \cite[Lemma 1]{teravainen}, which is a variant of \cite[Lemma~14]{mrmult}.
\end{proof}

Finally, we record an exponential sum bound and a related bound on the sum of reciprocal of the distance to the nearest integer function which provide the necessary cancellation in the estimation of the minor arcs.

\begin{lemma}\label{lem:expsumbound}
Let $\beta\in\mathbb{R}$, then
\[
\sum_{n\leq x}e(\beta n)\ll\min\left(x,\frac{1}{\|\beta\|}\right).
\]
\end{lemma}

\begin{proof}
This is a standard result, see for example \cite[Chapter 8, Eq. (8.6)]{iwanieckowalski}.
\end{proof}

\begin{lemma}\label{lem:distancesum}
If $1<X\leq Y$ and $\alpha\in\mathbb{R}$ satisfies $\alpha=a/q+O(q^{-2})$ with $(a,q)=1$, then we have
\[
\sum_{n\leq X}\min\left(\frac{Y}{n},\frac{1}{\|\alpha n\|}\right)
\ll\left(\frac{Y}{q}+X+q\right)\log(qX).
\]
\end{lemma}

\begin{proof}
This is a standard result, see for example \cite[Chapter 13, Page 346]{iwanieckowalski}.
\end{proof}

\section{Applying the Circle Method}
\label{sec:setup}

To prove Theorem \ref{thm:mainthm}, we will apply the Hardy-Littlewood circle method. Let $\varepsilon>0$ be small and fixed. We define $P>0$ according to the size of $H$ as follows:
\begin{equation}\label{eq:Pdef}
P:=
\begin{cases}
\log^{17+\varepsilon}X&\text{ if }\log^{19+\varepsilon}X\leq H\leq \exp\big((\log X)^{\varepsilon^3}\big),\\ 
\exp\big((\log\log X)^2\big),&\text{ if }\exp\big((\log X)^{\varepsilon^3}\big)< H\leq X\log^{-A}X.
\end{cases}
\end{equation}
It will be more convenient throughout the argument to have a $\log$ weight attached to the indicator function of $E'_2$ as follows:
\begin{defn}\label{def:charfndefn}
We define the arithmetic function $\varpi_2:\mathbb{N}\rightarrow\mathbb{R}$ to be
\[
\varpi_2(n)=
\begin{cases}
\log p_2,&\text{ if }n=p_1p_2\text{ with }P<p_1\leq P^{1+\delta},\\
0,&\text{ otherwise}.
\end{cases}
\]
\end{defn}

From now on we fix $\delta>0$ sufficiently small. We will prove the following asymptotic formula, from which Theorem \ref{thm:mainthm} follows immediately after applying dyadic decomposition:

\begin{theorem}\label{thm:weightedmainthm}
Let $\varepsilon>0$, $A>3$ be fixed and let $\log^{19+\varepsilon}X\leq H\leq X\log^{-A}X$. Then, there exists some $\eta=\eta(\varepsilon)>0$ such that for all but at most $O(H\log^{-\eta} X)$ values of $0<|h|\leq H$ we have that 
\begin{equation}\label{eq:weightedmainresult}
\sum_{X<n\leq 2X}\varpi_2(n)\varpi_2(n+h)=\mathfrak{S}(h)X\left(\sum_{P<p\leq P^{1+\delta}}\frac{1}{p}\right)^2+O\left(\frac{X}{\log^{\eta} X}\right),
\end{equation}
where $\mathfrak{S}(h)$ is the singular series defined in \eqref{eq:singularseries}.
\end{theorem}

\begin{remark}
As $H$ becomes an arbitrarily large power of $\log X$, or is larger than any power of $\log X$, we are able to improve the bound on the error terms to $O(X\log^{-A}X)$ for $A>0$ once we have suitably modified the dependencies between $H$, $P$ and the parameters of the circle method. We also note that, after appropriately modifying the main term, using this result we can in fact prove Theorem \ref{thm:mainthm} with a better error term.
\end{remark}

We consider the integral
\begin{equation}\label{eq:circleint}
\int_{0}^{1}|S(\alpha)|^2e(-h\alpha)d\alpha
=\sum_{X<m,n\leq2X}\varpi_2(m)\varpi_2(n)\int_0^1e(\alpha(m-n-h))d\alpha,
\end{equation}
where for $\alpha\in(0,1)$ we define the exponential sum
\[
S(\alpha):=\sum_{X<n\leq 2X}\varpi_2(n)e(n\alpha).
\]
Then, by the integral identity
\begin{equation}\label{eq:expint}
\int_0^1 e(nx)dx=
\begin{cases}
1,&\text{if }n=0,\\
0,&\text{otherwise},
\end{cases}
\end{equation}
we have that the integral in \eqref{eq:circleint} vanishes unless $m=n+h$. Thus \eqref{eq:circleint} becomes
\begin{align}\label{eq:correlationint}
\int_{0}^{1}|S(\alpha)|^2e(-h\alpha)d\alpha
&=\sum_{X<n\leq2X-h}\varpi_2(n)\varpi_2(n+h)\nonumber\\
&=\sum_{X<n\leq2X}\varpi_2(n)\varpi_2(n+h)+O(h\log^2 X).
\end{align}
This error term will be negligible by our choice of $H$. Thus, except for an acceptable error, we can represent the correlation by an integral over the unit circle.

We split the domain of integration into the major and minor arcs. We define the major arcs $\mathfrak{M}$ to be the set of real $\alpha\in(0,1)$ such that
\begin{equation}\label{eq:majordef}
\left|\alpha-\frac{a}{q}\right|\leq\frac{1}{qQ}\text{ for some }1\leq q\leq Q_0,a<q,(a,q)=1
\end{equation}
with $Q_0:=\log^{A'}X$ and $Q:=P\log X$. Here we define $A'>0$ according to the size of $H$ as follows
\begin{equation}\label{eq:Q0def}
A':=
\begin{cases}
1+\varepsilon^2,&\text{ if }\log^{19+\varepsilon}X\leq H\leq\exp\big((\log X)^{\varepsilon^3}\big),\\
3+\varepsilon^2,&\text{ if }\exp\big((\log X)^{\varepsilon^3}\big)< H\leq X\log^{-A}X.
\end{cases}
\end{equation}
We define the minor arcs $\mathfrak{m}$ to be the rest of the circle, that is, the set of real $\alpha\in(0,1)$ such that 
\begin{equation}\label{eq:minordef}
\left|\alpha-\frac{a}{q}\right|\leq\frac{1}{qQ}\text{ for some }Q_0< q\leq Q,a<q,(a,q)=1.
\end{equation}

\begin{remark}
The parameters satisfy $Q_0<P<Q<H$. Decreasing the size we can take for $P$ would directly reduce how small we are able to take $H$.
\end{remark}

In Section \ref{sec:Minors}, we will prove the following estimate for the integral over the minor arcs:
\begin{prop}[Minor Arc Estimate]\label{prop:minorestimate}
Let $A>3$ be fixed and let $\varepsilon>0$ be fixed  sufficiently small. Let $Q\log^{1+\varepsilon}X\leq H\leq X\log^{-A}X$. With $\mathfrak{m}$ defined as in \eqref{eq:minordef}, for $\alpha\in\mathfrak{m}$ there exists some $\eta=\eta(\varepsilon)>0$ such that
\begin{equation}\label{eq:minorintegral}
\int_{\mathfrak{m}\cap[\alpha-\frac{1}{2H},\alpha+\frac{1}{2H}]}|S(\theta)|^2d\theta
\ll\frac{X}{\log^{1+\eta}X}.
\end{equation}
\end{prop}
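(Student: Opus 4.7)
The plan is to apply \cref{lem:gallagherslemma} to replace the integral on the minor-arc piece around $\alpha$ by the mean square of a short-interval exponential sum, then to bound this mean square via the bilinear factorisation of $\varpi_2$. Writing $S(\alpha+\beta)=\sum_n a_n e(n\beta)$ with $a_n=\varpi_2(n)e(n\alpha)$ (so $|a_n|\leq\log X$), \cref{lem:gallagherslemma} with $y=H$ yields
\[
\int_{|\beta|\leq 1/(2H)}|S(\alpha+\beta)|^2\,d\beta \ll \frac{1}{H^2}\int_X^{2X}|G(x,\alpha)|^2\,dx + H(\log X)^2,
\]
where $G(x,\alpha)=\sum_{x<n\leq x+H}\varpi_2(n)e(n\alpha)$. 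The integral in the Proposition sits inside this ball, and $H(\log X)^2\ll X/\log^{1+\eta}X$ for $\eta<A-3$ since $A>3$; so it remains to show $\int_X^{2X}|G|^2\,dx\ll H^2X/\log^{1+\eta}X$.

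Opening the square via $\varpi_2(n)=\sum_{n=p_1p_2,\,P<p_1\leq P^{1+\delta}}\log p_2$ produces a four-fold sum over $(p_1,p_1',p_2,p_2')$ twisted by $e(\alpha(p_1p_2-p_1'p_2'))$ and weighted by $\psi(r):=\max(0,H-|r|)$. Under the size restriction $p_1,p_1'\in(P,P^{1+\delta}]$, the diagonal $p_1p_2=p_1'p_2'$ forces $(p_1,p_2)=(p_1',p_2')$ and contributes $\sim H\sum_n\varpi_2(n)^2\ll\delta HX\log X$, giving $\ll X/\log^{1+\eta}X$ after dividing by $H^2$ under the hypothesis $H\geq Q\log^{1+\varepsilon}X$. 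The semi-diagonal sub-cases $p_1=p_1'$ or $p_2=p_2'$ reduce to one-parameter sums over the remaining shift; \cref{lem:primepairs} majorises the resulting pair-prime counts by the singular series, and together with $\sum_{|k|\leq K}\mathfrak{S}(|k|)\ll K$ this makes their contributions $\ll\delta X/(P\log P)$ after division by $H^2$, comfortably within the target.

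The main obstacle is the genuine off-diagonal with $p_1\neq p_1'$ and $p_2\neq p_2'$: here \cref{lem:primepairs} and a Selberg sieve bound give only the \emph{upper} bound $\ll\delta^2\mathfrak{S}(|k|)X$ on the pair-$E_2$ count at each shift $k=p_1p_2-p_1'p_2'$, so cancellation from $e(\alpha k)$ must be extracted bilinearly rather than by appeal to a Hardy--Littlewood-type asymptotic (which would be circular). Grouping by $k$ and invoking the Fourier--Ramanujan expansion of the singular series reduces the task to bounding $\sum_k e((\alpha+a/d)k)(H-|k|)$ summed against divisor-type weights, to which \cref{lem:expsumbound} and \cref{lem:distancesum} apply using the Dirichlet approximation $\alpha=a/q+O(1/(qQ))$ with $q>Q_0=\log^{A'}X$; the calibration of $A'$ in \eqref{eq:Q0def} then delivers the required saving of $\log^{1+\eta}X$ over the trivial bound.
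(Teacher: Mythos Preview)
Your opening via Gallagher's lemma matches the paper, but the two arguments diverge immediately afterwards, and your treatment of the off-diagonal has a genuine gap.

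The paper does \emph{not} open $|G(x,\alpha)|^2$ directly into a four-fold sum. Instead, it first applies Cauchy--Schwarz in the small prime variable:
\[
\Bigl|\sum_{P<p_1\leq P^{1+\delta}}\sum_{x<p_1p_2\leq x+H}(\log p_2)e(\alpha p_1p_2)\Bigr|^2
\leq \Bigl(\sum_{P<m\leq P^{1+\delta}}1\Bigr)\sum_{P<m\leq P^{1+\delta}}\Bigl|\sum_{x<mp\leq x+H}(\log p)e(\alpha mp)\Bigr|^2.
\]
Only \emph{then} is the square expanded, so there is a single variable $m\in(P,P^{1+\delta}]$ and two large primes $p_1,p_2$. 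After integrating in $x$, the off-diagonal $r=p_1-p_2\neq0$ carries the twist $e(\alpha mr)$ and weight $H-m|r|$; summing over $m$ via \cref{lem:expsumbound} gives $\min(H/r,\|\alpha r\|^{-1})$, and then \cref{lem:distancesum} together with the prime-pair bound of \cref{lem:primepairs} exploits the minor-arc condition $Q_0<q\leq Q$ to finish. The Cauchy--Schwarz loss of $P^{1+\delta}/\log P$ is what makes the bilinear sum over $(m,r)$ genuinely tractable.

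In your approach the ``genuine off-diagonal'' with $p_1\neq p_1'$ and $p_2\neq p_2'$ is not handled. You correctly observe that replacing the pair-$E_2'$ count at shift $k$ by any upper bound such as $\mathfrak{S}(|k|)X$ destroys the oscillation of $e(\alpha k)$, and that a Hardy--Littlewood-type asymptotic would be circular. But the step you then describe---``grouping by $k$ and invoking the Fourier--Ramanujan expansion of the singular series''---is precisely the move you just ruled out: expanding $\mathfrak{S}(|k|)$ as $\sum_d\mu^2(d)\varphi(d)^{-2}c_d(k)$ presupposes that the count \emph{equals} $\mathfrak{S}(|k|)X$ up to a negligible error, which you do not have. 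Taking absolute values instead gives only $\sum_{|k|\leq H}H\cdot(\text{count at }k)\asymp \delta^2H^2X$, which after division by $H^2$ is $\asymp\delta^2X$, far too large. The phrase ``extracted bilinearly'' is not cashed out: the shift $k=p_1p_2-p_1'p_2'$ is not bilinear in any obvious pair of variables, and no substitute for the paper's Cauchy--Schwarz reduction is offered. That reduction is the missing idea.
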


Sections \ref{sec:Majors}-\ref{sec:mvdp} will be dedicated to proving the following expression for the integral over the major arcs:
\begin{prop}[Major Arc Integral]\label{prop:majorestimate}
Let $A>3$ be fixed and let $\varepsilon>0$ be fixed sufficiently small. Let $\log^{19+\varepsilon}X\leq H\leq X\log^{-A}X$. With $\mathfrak{M}$ defined as in \eqref{eq:majordef} and $\delta>0$ sufficiently small, there exists some $\eta=\eta(\varepsilon)>0$ such that for all but at most $O(HQ_0^{-1/3})$ values of $0<|h|\leq H$ we have that
\[
\int_\mathfrak{M} |S(\alpha)|^2e(-h\alpha) d\alpha
=\mathfrak{S}(h)X\left(\sum_{P<p\leq P^{1+\delta}}\frac{1}{p}\right)^2
+O\left(\frac{X}{\log^\eta X}\right),
\]
where $\mathfrak{S}(h)$ is the singular series given in \eqref{eq:singularseries}.
\end{prop}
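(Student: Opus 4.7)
The plan is the classical circle-method decomposition. On a major arc with $\alpha=a/q+\beta$, $q\leq Q_0$, $|\beta|\leq 1/(qQ)$, I would insert the character identity $e(an/q)=\varphi(q)^{-1}\sum_{\chi\pmod q}\bar\chi(a)\tau(\chi)\chi(n)$ (valid for $(n,q)=1$; the tail $(n,q)>1$ is negligible, since any $n$ counted by $\varpi_2$ has $p_1>P>Q_0\geq q$, so coprimality with $q$ constrains only the larger prime factor $p_2$) to obtain
\[
S\!\left(\tfrac{a}{q}+\beta\right)=\frac{1}{\varphi(q)}\sum_{\chi\pmod q}\bar\chi(a)\tau(\chi)T_\chi(\beta)+O\!\left(\tfrac{X}{P}\right),\qquad T_\chi(\beta):=\sum_{X<n\leq 2X}\varpi_2(n)\chi(n)e(n\beta).
\]
Applying Siegel--Walfisz to the $p_2$ variable inside $\varpi_2$ for each fixed $p_1\in(P,P^{1+\delta}]$ yields $T_{\chi_0}(\beta)=V(\beta)+O(X\log^{-2A}X)$, where $V(\beta):=\bigl(\sum_{P<p\leq P^{1+\delta}}p^{-1}\bigr)\sum_{X<n\leq 2X}e(n\beta)$. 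Using $\tau(\chi_0)=\mu(q)$, this identifies the expected main-term approximation $S_0\bigl(\tfrac{a}{q}+\beta\bigr):=\frac{\mu(q)}{\varphi(q)}V(\beta)$ on each major arc.

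Second, I would evaluate the main-term contribution. Squaring $S_0$, summing over $a$ coprime to $q$ via the Ramanujan sum $c_q(h)$, and extending the inner integral from $|\beta|\leq 1/(qQ)$ to $[-1/2,1/2]$ (at negligible cost since the tail is controlled by Parseval and the size of $V$), Parseval gives $\int_0^1|V(\beta)|^2e(-h\beta)d\beta=(X-|h|)(\sum_{P<p\leq P^{1+\delta}}p^{-1})^2$. Completing the sum $\sum_{q\leq Q_0}\mu(q)^2c_q(h)/\varphi(q)^2$ to $q<\infty$ at cost $O(Q_0^{-1/2+\varepsilon}d(h))$ and invoking the Euler-product evaluation $\sum_{q\geq 1}\mu(q)^2c_q(h)/\varphi(q)^2=\mathfrak{S}(h)$, the main-term integral equals $\mathfrak{S}(h)X\bigl(\sum_{P<p\leq P^{1+\delta}}p^{-1}\bigr)^2$ up to an acceptable error, with the $h$-dependent completion error absorbed into the exceptional set of size $O(HQ_0^{-1/3})$ via the singular-series bound in \cref{lem:primepairs}.

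Third, I would bound the error $E(\alpha):=S(\alpha)-S_0(\alpha)$ on $\mathfrak{M}$ in mean square over $h$. Writing $|S|^2=|S_0|^2+2\,\mathrm{Re}(S_0\bar E)+|E|^2$ and applying Plancherel in $h$, the exceptional-set contribution from the last two terms is controlled by
\[
\sum_{q\leq Q_0}\frac{q}{\varphi(q)^2}\sum_{\chi\neq\chi_0\pmod q}\int_{|\beta|\leq 1/(qQ)}|T_\chi(\beta)|^2\,d\beta.
\]
Gallagher's Lemma (\cref{lem:gallagherslemma}) with $y\asymp qQ$ converts each inner integral into $y^{-2}\int_X^{2X}|\sum_{x<n\leq x+y}\varpi_2(n)\chi(n)|^2\,dx$ plus a negligible pointwise term. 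Following the outline, I would add and subtract a length-$q\Delta$ sum with $\Delta$ just smaller than $X$ as in \eqref{eq:heuristicmajor}: the long-interval piece is handled by opening $\varpi_2$ as a Dirichlet convolution, using Cauchy--Schwarz in $p_1$, and invoking \cref{lem:primesalmostall} applied to $\Lambda$; the short-minus-long difference is bounded by the Parseval-type estimate (\cref{lem:dpbound}), reducing it to a mean square $\int|F_\chi(1+it)|^2\,dt$ of the Dirichlet polynomial $F_\chi(s):=\sum_{X<n\leq 2X}\varpi_2(n)\chi(n)n^{-s}$. The main obstacle, and the technical heart of the argument, is this Dirichlet polynomial mean square, which I would treat in the spirit of \cite{teravainen}: factor $F_\chi=F_{1,\chi}F_{2,\chi}$ into a short polynomial over $p_1\in(P,P^{1+\delta}]$ and a longer polynomial over $p_2$; split the $t$-range according to whether $|F_{1,\chi}(1+it)|$ is large or small; in the small range, apply the pointwise bound and a mean value theorem; in the large range, apply Heath--Brown's identity to $F_{2,\chi}$ and split into Type I and Type II sums, bounding Type I via the twisted fourth moment of partial Dirichlet $L$-sums and Type II via Hal\'asz--Montgomery combined with large-value theorems. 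The parameter choices $P=\log^{17+\varepsilon}X$ or $\log^{85/2+\varepsilon}X$ enter precisely to give $F_{1,\chi}$ enough length in the Hal\'asz--Montgomery step to save the required power of $\log X$, which in turn fixes the lower bound $H\geq\log^{19+\varepsilon}X$ in the statement.
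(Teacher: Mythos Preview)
Your outline follows the paper closely: character expansion on each major arc, isolation of $S_0=a(\alpha)$, Ramanujan-sum completion for the main term (the exceptional set arising solely from the tail $q>Q_0$), and bounding $\int_\mathfrak{M}|E|^2=B^2(X)$ via Gallagher's Lemma, the long/short split \eqref{eq:heuristicmajor}, \cref{lem:primesalmostall} for the long piece, \cref{lem:dpbound} for the difference, and the Ter\"av\"ainen-style Dirichlet-polynomial argument (factorisation, Heath--Brown, Type~I/II, Hal\'asz--Montgomery, large values, twisted fourth moment). This is exactly the paper's route.

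One step fails as written: the pointwise claim $T_{\chi_0}(\beta)=V(\beta)+O(X\log^{-2A}X)$ via Siegel--Walfisz. Since $\chi_0(n)=1$ for every $n$ in the support of $\varpi_2$, this amounts to asserting
\[
\sum_{X<n\leq 2X}\bigl(\varpi_2(n)-c\bigr)e(n\beta)\ll \frac{X}{\log^{2A}X}\qquad\text{uniformly for }|\beta|\leq\frac{1}{Q},
\]
with $c=\sum_{P<p\leq P^{1+\delta}}p^{-1}$. Partial summation against $e(n\beta)$ costs a factor $1+|\beta|X\ll X/Q$, so the honest pointwise bound is only $\ll (X/Q)\cdot X\exp(-c\sqrt{\log X})\gg X$. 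The paper does not attempt a pointwise estimate here: the $\chi_0$-difference is folded into $b(\alpha)$ and bounded in $L^2(\mathfrak{M})$ together with the non-principal characters, where Gallagher's Lemma plus the prime number theorem in short intervals (for the $B_2$ piece) and the observation $\varpi_2\chi_0=\varpi_2$ (for the $B_1$ piece) dispose of it. This repair is routine and the remainder of your plan is correct. A minor related point: the paper controls the cross term uniformly in $h$ by Cauchy--Schwarz, $\bigl|\int_\mathfrak{M}2\mathrm{Re}(S_0\bar E)e(-h\alpha)\bigr|\leq 2A(X)B(X)$, rather than by averaging over $h$; the exceptional set in the statement comes only from the singular-series completion.
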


Assuming Proposition \ref{prop:minorestimate} and Proposition \ref{prop:majorestimate}, we can now prove Theorem \ref{thm:weightedmainthm}.

\begin{proof}[Proof of Theorem \ref{thm:weightedmainthm}.]
Following \cite[Section 3]{mrt}, by \cite[Proposition 3.1]{mrt} we have that
\[
\sum_{0<|h|\leq H}\left|\sum_{X<n\leq2X}\varpi_2(n)\varpi_2(n+h)-\int_{\mathfrak{M}}|S(\alpha)|^2e(-h\alpha)d\alpha\right|^2
\ll H\int_\mathfrak{m}|S(\alpha)|^2\int_{\mathfrak{m}\cap[\alpha-\frac{1}{2H},\alpha+\frac{1}{2H}]}|S(\beta)|^2d\beta d\alpha.
\]
By Proposition \ref{prop:minorestimate}, there exists some $\eta=\eta(\varepsilon)>0$ such that
\[
\sup_{\alpha\in\mathfrak{m}}\int_{\mathfrak{m}\cap[\alpha-\frac{1}{2H},\alpha+\frac{1}{2H}]}|S(\beta)|^2d\beta
\ll\frac{X}{\log^{1+\eta}X}.
\]
Noting that by partial summation and Mertens' theorem we have the bound
\[
\int_0^1|S(\alpha)|^2d\alpha
=\sum_{X<n\leq2X}\varpi_2^2(n)
\ll \log X\sum_{X<n\leq2X}\varpi_2(n)
\ll X\log X\sum_{P<p\leq P^{1+\delta}}\frac{1}{p}\ll X\log X
\]
we have that
\[
\sum_{0<|h|\leq H}\left|\sum_{X<n\leq2X}\varpi_2(n)\varpi_2(n+h)-\int_{\mathfrak{M}}|S(\alpha)|^2e(-h\alpha)d\alpha\right|^2\ll\frac{HX^2}{\log^\eta X}.
\]
Applying Chebyshev's inequality and Proposition \ref{prop:majorestimate} then gives the result.
\end{proof}

\section{The Minor Arcs}
\label{sec:Minors}

We first treat the integral over the minor arcs, proving Proposition \ref{prop:minorestimate} by following the proof of \cite[Lemma~8]{mikawa}.

\begin{proof}[Proof of Proposition \ref{prop:minorestimate}.] Starting with the minor arc integral \eqref{eq:minorintegral}, we make the substitution $\theta=\alpha+\beta$ to see that
\[
I:=\int_{\mathfrak{m}\cap[\alpha-\frac{1}{2H},\alpha+\frac{1}{2H}]}|S(\theta)|^2d\theta=\int_{\substack{\alpha+\beta\in\mathfrak{m}\\|\beta|\leq\frac{1}{2H}}}|S(\alpha+\beta)|^2d\beta.
\]
We apply Lemma \ref{lem:gallagherslemma} to the integral to get
\[
I\ll\frac{1}{H^2}\int_{X}^{2X}\left|\sum_{x<n\leq x+H}\varpi_2(n)e(n\alpha)\right|^2dx+H\log^2 X.
\]
The second term is acceptable by our choice of $H$, so it remains to bound the first term. We first consider the case $H\leq \exp((\log X)^{\varepsilon^3})$. We apply the Cauchy-Schwarz inequality to the integrand to get
\begin{equation}\label{eq:minorCS}
\Bigg|\sum_{\substack{x<p_1p_2\leq x+H\\P<p_1\leq P^{1+\delta}}}(\log p_2)e(\alpha p_1p_2)\Bigg|^2 
\leq \Bigg(\sum_{P<m_1\leq P^{1+\delta}}|\primecf(m_1)|^2\Bigg)\Bigg(\sum_{P<m_2\leq P^{1+\delta}}\Bigg|\sum_{x<m_2p\leq x+H}(\log p)e(\alpha m_2 p)\Bigg|^2\Bigg).
\end{equation}
The first term is $\ll \frac{P^{1+\delta}}{\log P}$, while the second term is equal to
\[
\sum_{\substack{x<mp_1,mp_2\leq x+H \\ P<m\leq P^{1+\delta}}} (\log p_1)(\log p_2) e(\alpha m(p_1-p_2)).
\]
Next, we perform the integration on this sum. We may trivially extend the domain of integration to $[0,3X]$ as the integrand is positive. Define the set $\Omega:=\{x:0\leq x\leq 3X, mp_i-H\leq x<mp_i, i=1,2\}$. Exchanging the order of integration and summation and noting that $X<x<mp_1,mp_2\leq x+H\leq3X$, we have that 
\[
I\ll\frac{P^{1+\delta}}{H^2\log P}\sum_{P<m\leq P^{1+\delta}}\left|\sum_{X<mp_1,mp_2\leq3X}(\log p_1)(\log p_2)e(\alpha m(p_1-p_2))\cdot|\Omega|\right|.
\]
If $m|p_1-p_2|>H$, then $|\Omega|=0$. Since we have that $mp_i-H>X-H>0$ and $mp_i\leq3X$ for $i=1,2$, the condition $0\leq x\leq 3X$ is weaker than the condition $\max(mp_1,mp_2)-H\leq x<\min(mp_1,mp_2)$. Therefore, if $m|p_1-p_2|\leq H$ we have that $|\Omega|=H-m|p_1-p_2|$.

We now split the sum into the diagonal terms, $p_1=p_2$, and the off-diagonal terms, $p_1\neq p_2$, denoted by $S_1$ and $S_2$ respectively. The diagonal terms contribute
\begin{equation}\label{eq:minorsdiag}
S_1\ll \frac{P^{1+\delta}}{H\log P}\sum_{P<m\leq P^{1+\delta}}\sum_{\frac{X}{m}<p\leq\frac{3X}{m}}\log^2 p
\ll \frac{XP^{1+\delta}\log(X/P)}{H}.
\end{equation}
Now we bound the off-diagonal terms. Let $r=|p_1-p_2|$. Noting that $0<mr\leq H$, we need to bound
\[
S_2\ll\frac{P^{1+\delta}}{H^2\log P}\sum_{0<r\leq H}\sum_{\substack{\frac{X}{P^{1+\delta}}<p_1,p_2\leq\frac{3X}{P}\\p_2=p_1+r}}(\log p_1)(\log p_2)\Bigg|\sum_{\substack{P<m\leq P^{1+\delta}\\0<m\leq H/r}}e(\alpha mr)(H-mr)\Bigg|.
\]
Noting that $0<m\leq H/r$ and $P<m\leq P^{1+\delta}$, we have that $0<r\leq H/P$. We apply partial summation and Lemma \ref{lem:expsumbound} to the sum over $m$ to see that
\[
S_2\ll \frac{P^{1+\delta}}{H\log P}\sum_{0<r\leq \frac{H}{P}}\min\left(\frac{H}{r},\frac{1}{\|\alpha r\|}\right)\sum_{\substack{\frac{X}{P^{1+\delta}}<p_1,p_2\leq\frac{3X}{P}\\p_2=p_1+r}}(\log p_1)(\log p_2).
\]
By partial summation followed by Lemma \ref{lem:primepairs}, we have that the sum over $p_1,p_2$ is bounded by 
\[
\sum_{\substack{\frac{X}{P^{1+\delta}}<p_1,p_2\leq\frac{3X}{P}\\p_2=p_1+r}}(\log p_1)(\log p_2)
\ll\log^2X\sum_{\substack{\frac{X}{P^{1+\delta}}<p_1,p_2\leq\frac{3X}{P}\\p_2=p_1+r}}1
\ll\frac{\mathfrak{S}(r)X}{P}.
\]
Therefore the contribution of the off-diagonal terms can be bounded by
\[
S_2
\ll\frac{XP^{1+\delta}}{HP\log P}\sum_{0<r\leq \frac{H}{P}}\min\left(\frac{H}{r},\frac{1}{\|\alpha r\|}\right)\mathfrak{S}(r).
\]
We have that $\mathfrak{S}(r)\ll\log\log r$, so applying partial summation we have that
\[
S_2
\ll\frac{XP^{\delta}}{H\log P}\log\log X\sum_{0<r\leq \frac{H}{P}}\min\left(\frac{H}{r},\frac{1}{\|\alpha r\|}\right).
\]
Next, we apply Lemma \ref{lem:distancesum} to the sum over $r$ to get
\begin{equation}\label{eq:minorsoffdiag}
S_2
\ll\frac{XP^\delta }{H}\left(\frac{H}{Q_0}+\frac{H}{P}+Q\right)\log\frac{QH}{P},
\end{equation}
recalling that since $\alpha\in\mathfrak{m}$ we have that $Q_0\leq q\leq Q$. Since $H\leq\exp((\log X)^{\varepsilon^3})$, we note that $\log\frac{QH}{P}\ll(\log X)^{\varepsilon^3}$. Therefore, combining the contributions of the diagonal terms \eqref{eq:minorsdiag} and the off-diagonal terms \eqref{eq:minorsoffdiag}, we find
\[
I
\ll XP^\delta\left((\log X)^{\varepsilon^3}\left(\frac{1}{Q_0}+\frac{1}{P}+\frac{Q}{H}\right)+\frac{P\log(X/P)}{H}\right).
\]
By our choices of $Q_0=\log^{1+\varepsilon^2}X$, $Q\log^{1+\varepsilon} X=P\log^{2+\varepsilon}X\ll H$, we have that
\[
I\ll\frac{X}{\log^{1+\eta}X}
\]
for some $\eta=\eta(\varepsilon)>0$. 

Otherwise, if $H>\exp((\log X)^{\varepsilon^3})$, we split the sum over $P\le p_1\leq P^{1+\delta}$ into dyadic intervals before applying Cauchy-Schwarz in \eqref{eq:minorCS}. We have that $\log\frac{QH}{P}\ll\log X$ and $Q_0=\log^{3+\varepsilon^2}X$, so that the total contribution is
\[
I\ll X\log\log X\left(\log X\left(\frac{1}{Q_0}+\frac{1}{P}+\frac{Q}{H}\right)\right)+\frac{XP\log(X/P)}{H}
\ll\frac{X}{\log^{2+\eta}X}
\]
for some $\eta=\eta(\varepsilon)>0$, which is acceptable.
\end{proof}

\section{The Major Arcs}
\label{sec:Majors}

We now shift our attention to evaluating the contribution of the integral over the major arcs. We will first expand the exponential sum $S(\alpha)$ in terms of Dirichlet characters and suitably approximate the contribution of the principal character, which will provide the main term. We will then evaluate this main term and the sequel will then be dedicated to bounding the error terms that arise from this expansion.

\subsection{Expanding the Exponential Sum}
First, we rewrite the integral over the major arcs by expanding the exponential sum $S(\alpha)$ in terms of Dirichlet characters. Recalling that $\alpha=a/q+\beta$ satisfies \eqref{eq:majordef}, we first define
\begin{align*}
a(\alpha)&:=\frac{\mu(q)}{\varphi(q)}\sum_{P<p\leq P^{1+\delta}}\frac{1}{p}\sum_{X<n\leq2X}e(\beta n),\\
b(\alpha)&:=\frac{1}{\varphi(q)}\sum_{\chi(q)}\tau(\overline{\chi})\chi(a)\sum_{X<n\leq2X}\left(\varpi_2(n)\chi(n)-\delta_\chi\sum_{P_1<p\leq P^{1+\delta}}\frac{1}{p}\right)e(\beta n),\\
A^2(X)&:=\int_{\mathfrak{M}}|a(\alpha)|^2d\alpha,\\
B^2(X)&:=\int_{\mathfrak{M}}|b(\alpha)|^2d\alpha,
\end{align*}
where $\tau(\chi)$ denotes the Gauss sum as defined in \eqref{eq:gauss} and $\delta_\chi=1$ when $\chi=\chi_0$ and is zero otherwise. We will now find the following expression for the integral over the major arcs, once we have expanded the exponential sum:

\begin{lemma}\label{lem:ExpandS}
Let $\mathfrak{M}$ be defined as in \eqref{eq:majordef}. We have that
\[
\int_{\mathfrak{M}}|S(\alpha)|^2 e(-h\alpha)d\alpha=\int_{\mathfrak{M}}|a(\alpha)|^2 e(-h\alpha)d\alpha+O\left(A(X)B(X)+B^2(X)\right).
\]
\end{lemma}

\begin{proof}
Let $\alpha\in\mathfrak{M}$, so that $\alpha=\frac{a}{q}+\beta$ with $q\leq Q_0$, $(a,q)=1$ and $|\beta|\leq\frac{1}{qQ}$. Then
\[
S(\alpha)=\sum_{X<n\leq 2X}\varpi_2(n)e\left(\frac{an}{q}\right)e(\beta n).
\]
By Definition \ref{def:charfndefn}, we have that $n=p_1p_2$ with $P<p_1\leq P^{1+\delta}$. As we have $P>Q_0$, we must have that $(p_1,q)=(p_2,q)=1$ and therefore that $(n,q)=1$. We can now rewrite our expression for $S(\alpha)$ by applying the identity
\begin{equation}\label{eq:addmult}
e\left(\frac{a}{q}\right)=\frac{1}{\varphi(q)}\sum_{\chi(q)}\chi(a)\tau(\overline{\chi})
\end{equation}
which holds for $(a,q)=1$. This gives
\begin{align}
S(\alpha)
&=\frac{1}{\varphi(q)}\sum_{\chi(q)}\tau(\overline{\chi})\chi(a)\sum_{X<n\leq2X}\varpi_2(n)\chi(n)e(\beta n)
\label{eq:Salpha}\\
&=\frac{1}{\varphi(q)}\sum_{\chi(q)}\tau(\overline{\chi})\chi(a)\sum_{\substack{X<p_1p_2\leq2X\\P<p_1\leq P^{1+\delta}}}\chi(p_1)\chi(p_2)(\log p_2)e(\beta p_1p_2)\nonumber,
\end{align}
where we have applied the definition of $\varpi_2$ in the last line. Now we approximate the contribution of the principal character, which will become the main term. First, note that since we have $q\leq Q_0<P<p_1$ we must have that $(p_1p_2,q)=1$ for $X<p_1p_2\leq2X$, so we must have $(\log p_2)\chi_0(p_1)\chi_0(p_2)=\log p_2$ in these ranges. By the prime number theorem, we have that
\[
\sum_{X<n\leq2X}\varpi_2(n)
=\sum_{P<p_1\leq P^{1+\delta}}\sum_{\frac{X}{p_1}<p_2\leq\frac{2X}{p_1}}\log p_2
\sim X\sum_{P<p\leq P^{1+\delta}}\frac{1}{p}.
\]
Therefore we choose to approximate $\sum_{X<n\leq2X}\varpi_2(n)$ by
\[
\sum_{P<p\leq P^{1+\delta}}\frac{1}{p}\sum_{X<n\leq2X}1=X\sum_{P<p\leq P^{1+\delta}}\frac{1}{p}+O(1).
\]
Using this and the fact that $\tau(\chi_0)=\mu(q)$, we approximate the contribution of the principal character to the exponential sum $S(\alpha)$ by
\[
\frac{\mu(q)}{\varphi(q)}\sum_{P<p\leq P^{1+\delta}}\frac{1}{p}\sum_{X<n\leq2X}e(\beta n).
\]
Adding and subtracting this approximation in our expression \eqref{eq:Salpha} for $S(\alpha)$ we have that
\begin{align*}
S(\alpha)
=&\frac{\mu(q)}{\varphi(q)}\sum_{P<p\leq P^{1+\delta}}\frac{1}{p}\sum_{X<n\leq2X}e(\beta n)\\
&+\frac{1}{\varphi(q)}\sum_{\chi(q)}\tau(\overline{\chi})\chi(a)\sum_{X<n\leq2X}\left(\varpi_2(n)\chi(n)-\delta_\chi\sum_{P<p\leq P^{1+\delta}}\frac{1}{p}\right)e(\beta n)\\
=&a(\alpha)+b(\alpha).
\end{align*}
Finally, expanding the square and applying the Cauchy-Schwarz inequality, we have that
\begin{align*}
\int_{\mathfrak{M}}|S(\alpha)|^2 e(-h\alpha)d\alpha
&=\int_{\mathfrak{M}}|a(\alpha)+b(\alpha)|^2e(-h\alpha)d\alpha\\
&=\int_{\mathfrak{M}}|a(\alpha)|^2 e(-h\alpha)d\alpha+O\left(A(X)B(X)+B^2(X)\right),
\end{align*}
as required.
\end{proof}

Thus, in order to prove Proposition \ref{prop:majorestimate} we need to evaluate $\int_{\mathfrak{M}}|a(\alpha)|^2 e(-h\alpha)d\alpha$  (which will also provide a bound for $A^2(X)$) and suitably bound $B^2(X)$.

\subsection{Evaluating the Main Term}
\label{sec:MainTerm}

In this section we evaluate the integral $\int_{\mathfrak{M}}|a(\alpha)|^2 e(-h\alpha)d\alpha$, giving the main term of the asymptotic (and a bound for $A^2(X)$):

\begin{prop}\label{prop:mainterm}
Let $\varepsilon>0$ be fixed sufficiently small. Then for all but at most $O(HQ_0^{-1/3})$ values of $0<|h|\leq H$ we have that
\[
\int_{\mathfrak{M}}|a(\alpha)|^2 e(-h\alpha)d\alpha
=\mathfrak{S}(h)X\left(\sum_{P<p\leq P^{1+\delta}}\frac{1}{p}\right)^2+O\left(\frac{X}{\log^\eta X}\right),
\]
for some $\eta=\eta(\varepsilon)>0$, where we define the singular series $\mathfrak{S}(h)$ as in \eqref{eq:singularseries}.
\end{prop}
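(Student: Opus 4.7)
The plan is a direct computation: substitute the explicit formula for $a(\alpha)$ on each major arc, perform the sum over $a$ coprime to $q$ to produce a Ramanujan sum, and then complete both the $\beta$-integral and the $q$-sum to identify the main term. Writing $\alpha = a/q + \beta$ on the arc centred at $a/q$, setting $R(\beta) := \sum_{X<n\le 2X} e(\beta n)$, and denoting $M := \sum_{P<p\le P^{1+\delta}} 1/p$, we have $a(\alpha) = \mu(q) M R(\beta) / \varphi(q)$, so
\[
\int_\mathfrak{M} |a(\alpha)|^2 e(-h\alpha)\, d\alpha = M^2 \sum_{q \le Q_0} \frac{\mu(q)^2 c_q(h)}{\varphi(q)^2} \int_{-1/(qQ)}^{1/(qQ)} |R(\beta)|^2 e(-h\beta)\, d\beta,
\]
after performing the sum over primitive residues which produces $c_q(-h) = c_q(h)$.

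Next I would complete the $\beta$-integral to $[-1/2, 1/2]$. Orthogonality yields
\[
\int_{-1/2}^{1/2} |R(\beta)|^2 e(-h\beta)\, d\beta = \#\{X < n \le 2X : n+h \in (X, 2X]\} = X + O(H),
\]
while the error from extending is bounded using $|R(\beta)| \ll 1/\|\beta\|$ (\cref{lem:expsumbound}) by $\int_{|\beta| > 1/(qQ)} d\beta/\beta^2 \ll qQ$. Completing the $q$-sum via the classical identity $\mathfrak{S}(h) = \sum_{q \ge 1} \mu(q)^2 c_q(h)/\varphi(q)^2$ then extracts the main term $M^2 \mathfrak{S}(h) X$, leaving the three error contributions
\[
\mathcal{E}(h) := M^2 X\, |T(h)| + M^2 H\, \mathfrak{S}(h) + M^2 Q \sum_{q \le Q_0} \frac{\mu(q)^2 q |c_q(h)|}{\varphi(q)^2},
\]
where $T(h) := \sum_{q > Q_0} \mu(q)^2 c_q(h)/\varphi(q)^2$ is the tail of the singular series.

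The second term is handled pointwise: $\mathfrak{S}(h) \ll \log\log H$ and $H \le X\log^{-A}X$ with $A > 3$, giving $O(X/\log^{A-1-\varepsilon}X)$, acceptable by our choice of $\eta$. For the first and third terms the bounds must hold only for almost all $h$. The main obstacle is controlling $T(h)$, and this is where the exceptional set enters. Using $|c_q(h)| \le (q,h)$ and the elementary computation $\sum_{|h| \le H}(q,h) \ll H \cdot 2^{\omega(q)}$ together with $\sum_q \mu(q)^2 2^{\omega(q)}/\varphi(q)^2 q^\sigma < \infty$ for small $\sigma$, I get
\[
\sum_{|h| \le H} |T(h)| \ll \frac{H}{Q_0^{1-\varepsilon}}, \qquad \sum_{|h|\le H} \sum_{q \le Q_0} \frac{q|c_q(h)|}{\varphi(q)^2} \ll H \log Q_0,
\]
so the third error contributes $O(Q\log Q_0)$ on average, which is negligible.

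A final application of Chebyshev's inequality to $\sum_{|h|\le H}|T(h)|$ shows that $|T(h)| \le \log^{-\eta}X$ fails for at most $O(H\log^\eta X/Q_0^{1-\varepsilon}) = O(HQ_0^{-1/3})$ values of $h$, once $\eta$ is chosen small enough in terms of $\varepsilon$ and $A'$ (recalling $Q_0 = \log^{A'}X$ with $A' \ge 1 + \varepsilon^2$). Combining this with the pointwise bounds on the other error terms gives the proposition. The main subtlety throughout is bookkeeping: ensuring that the admissible $\eta$ is compatible with the prescribed exceptional set size $HQ_0^{-1/3}$ for both choices of $A'$ in \eqref{eq:Q0def}, which dictates the precise form of the averaged estimate on $T(h)$.
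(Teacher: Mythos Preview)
Your proof is correct and follows essentially the same route as the paper: expand over the major arcs, complete the $\beta$-integral to a full period (picking up an $O(qQ)$ extension error via \cref{lem:expsumbound}), then complete the $q$-sum to the full singular series and control the tail $T(h)$ on average before applying Chebyshev. The paper packages the tail estimate as a separate lemma (\cref{lem:singularseries}) and uses the $L^2$ bound $\sum_{h\le H}|T(h)|^2\ll H\log^2 H/Q_0$ from Vaughan, whereas you use the $L^1$ bound $\sum_{|h|\le H}|T(h)|\ll H/Q_0^{1-\varepsilon}$ obtained from $|c_q(h)|\le (q,h)$; both lead to the same exceptional-set size $O(HQ_0^{-1/3})$. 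One small remark: your third error term $M^2 Q\sum_{q\le Q_0}\mu(q)^2 q|c_q(h)|/\varphi(q)^2$ does not actually require averaging over $h$ --- using $|c_q(h)|\le q$ it is bounded pointwise by $O(QQ_0(\log\log Q_0)^2)$, which is a fixed power of $\log X$ and hence negligible against $X/\log^\eta X$ for every $h$.
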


Before we can prove Proposition \ref{prop:mainterm}, we need an expression involving the singular series $\mathfrak{S}(h)$.

\begin{lemma}[The Singular Series]\label{lem:singularseries}
Let $h$ be a non-zero even integer and $Q_0$ be defined as in \eqref{eq:Q0def}. Then, for all but at most $O(HQ_0^{-1/3})$ values of $0<|h|\leq H$ we have that
\[
\sum_{q\leq Q_0}\frac{\mu^2(q)c_q(-h)}{\varphi^2(q)}
=\mathfrak{S}(h)+O(Q_0^{-1/3}\log H).
\]
\end{lemma}

\begin{proof}
For similar results, see \cite[Page 39]{mrt} and \cite[Page 35]{vaughanhl}. Rewriting the sum over $q$, we have that
\[
\sum_{q\leq Q_0}\frac{\mu^2(q)c_q(-h)}{\varphi^2(q)}=
\left(\sum_{q=1}^\infty-\sum_{q> Q_0}\right)\frac{\mu^2(q)c_q(-h)}{\varphi^2(q)}.
\]
The first term can be seen to be equal to $\mathfrak{S}(h)$ by calculating the Euler product. It remains to bound the tail of the sum. By \cite[Page 35]{vaughanhl}, we have that
\[
\sum_{0<h\leq H}\left|\sum_{q>Q_0}\frac{\mu^2(q)c_q(-h)}{\varphi^2(q)}\right|^2
\ll\frac{H\log^2 H}{Q_0}.
\]
By Chebyshev's inequality, we have for all but at most $O(HQ_0^{-1/3})$ values of $h$ the bound
\[
\sum_{q>Q_0}\frac{\mu^2(q)c_q(-h)}{\varphi^2(q)}
\ll Q_0^{-1/3}\log H,
\]
as claimed.
\end{proof}

We are now able to complete the proof of Proposition \ref{prop:mainterm}.
\begin{proof}[Proof of Proposition \ref{prop:mainterm}.]
Applying the definition of the major arcs \eqref{eq:majordef} and expanding the square, we have that
\begin{align}
\int_{\mathfrak{M}}|a(\alpha)|^2 e(-h\alpha)d\alpha
&=\sum_{q\leq Q_0}\sum_{\substack{1\leq a\leq q\\(a,q)=1}}\int_{|\beta|\leq\frac{1}{qQ}}\Bigg|\frac{\mu(q)}{\varphi(q)}\sum_{P<p\leq P^{1+\delta}}\frac{1}{p}\sum_{X<n\leq2X}e(\beta n)\Bigg|^2e\left(\frac{-ha}{q}-h\beta\right)d\beta\nonumber\\
&=\left(\sum_{P<p\leq P^{1+\delta}}\frac{1}{p}\right)^2\sum_{q\leq Q_0}\frac{\mu^2(q)c_q(-h)}{\varphi^2(q)}\int_{|\beta|\leq\frac{1}{qQ}}\sum_{X<m,n\leq2X}e(\beta(m-n-h))d\beta\nonumber\\
&=\left(\sum_{P<p\leq P^{1+\delta}}\frac{1}{p}\right)^2\sum_{q\leq Q_0}\frac{\mu^2(q)c_q(-h)}{\varphi^2(q)}I_1,
\label{eq:aintegral}
\end{align}
say. We rewrite the integral $I_1$ as
\begin{align*}
I_1&=\left\{\int_0^1-\int_{\frac{1}{qQ}}^{1-\frac{1}{qQ}}\right\}\sum_{X<m,n\leq2X}e(\beta(m-n-h))d\beta\\
&=:I_2-I_3,
\end{align*}
say. To the first term $I_2$, we apply the identity \eqref{eq:expint} to get
\begin{equation}\label{eq:I1bound}
I_2=\sum_{\substack{X<m,n\leq2X\\m=n+h}}1
=X+O(H)
\end{equation}
and by our choice of $H$ the error term is acceptable. Now we bound the integral $I_3$. Note that $\beta$ is never an integer in the domain of integration, so applying Lemma \ref{lem:expsumbound} to the sums over $m$ and $n$ we have that
\[
I_3
=\int_{\frac{1}{qQ}}^{1-\frac{1}{qQ}}\sum_{X<m,n\leq2X}e(\beta(m-n-h))d\beta
\ll\int_{\frac{1}{qQ}}^{1-\frac{1}{qQ}}\frac{1}{\|\beta\|^2}d\beta
\ll qQ.
\]
Therefore, combining this with \eqref{eq:I1bound}, we have that
\[
I_1=X+O\left(qQ+H\right).
\]
We now substitute this expression for $I_1$ into \eqref{eq:aintegral} to get
\[
\int_{\mathfrak{M}}|a(\alpha)|^2 e(-h\alpha)d\alpha=\sum_{q\leq Q_0}\frac{\mu^2(q)c_q(-h)}{\varphi^2(q)}\Bigg(X\Bigg(\sum_{P<p\leq P^{1+\delta}}\frac{1}{p}\Bigg)^2+O(qQ+H)\Bigg).
\]
To complete the proof, it remains to treat the sum over $q$. By Lemma \ref{lem:singularseries} and our definitions of $H$ and $Q_0$, we find immediately that for all but at most $O(HQ_0^{-1/3})$ values of $0<|h|\leq H$ we have that
\[
\int_{\mathfrak{M}}|a(\alpha)|^2 e(-h\alpha)d\alpha
=\mathfrak{S}(h)X\left(\sum_{P<p\leq P^{1+\delta}}\frac{1}{p}\right)^2+O\left(\frac{X}{\log^\eta X}\right),
\]
for some $\eta=\eta(\varepsilon)>0$, as claimed.
\end{proof}

\section{The Error Term of the Major Arcs}
\label{sec:MajorError}

In order to complete the proof of Proposition \ref{prop:majorestimate}, and therefore the proof of Theorem \ref{thm:weightedmainthm}, we need to find sufficient cancellation in the error term $B^2(X)$ arising on the major arcs. In this section we prove the following bound for $B^2(X)$, which immediately completes the proof of Proposition \ref{prop:majorestimate} when combined with Proposition \ref{prop:mainterm}:

\begin{prop}\label{prop:B2bound}
Let $\varepsilon>0$ be fixed sufficiently small, then there exists some $\eta=\eta(\varepsilon)>0$ such that
\[
B^2(X)\ll\frac{X}{\log^{\eta}X}.
\]
\end{prop}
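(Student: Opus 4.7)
The plan is to execute the three-phase program outlined in the introduction: reduce $B^2(X)$ via Gallagher's Lemma to a mean-square estimate for character-twisted almost-prime sums in short intervals; split those sums into a long-interval piece handled by Cauchy-Schwarz and \cref{lem:primesalmostall}; and handle a short-versus-long difference by the Parseval bound combined with Dirichlet polynomial techniques.

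First I would write $\alpha=a/q+\beta$ with $(a,q)=1$ and $|\beta|\leq 1/(qQ)$, expand $|b(\alpha)|^2$, sum over $a$ using orthogonality of Dirichlet characters modulo $q$, and apply the standard bound $|\tau(\overline{\chi})|^2\leq q$ to obtain
\[
B^2(X)\ll \sum_{q\leq Q_0}\frac{q}{\varphi(q)}\sum_{\chi(q)}\int_{|\beta|\leq 1/(qQ)}\left|\sum_{X<n\leq 2X}\bigl(\varpi_2(n)\chi(n)-\delta_\chi P^*\bigr)e(\beta n)\right|^2 d\beta,
\]
where $P^*:=\sum_{P<p\leq P^{1+\delta}}1/p$. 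Then I would apply \cref{lem:gallagherslemma} with $y=qQ/2$; since $Q_0 Q$ is polylogarithmic in $X$, the boundary term $y(\max|a_n|)^2$ contributes an acceptable amount after summing over $q,\chi$, and the problem reduces to controlling a short-interval mean square with inner length $qQ/2$.

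Following \eqref{eq:heuristicmajor}, I would introduce an intermediate length $q\Delta/2$ with $X^{1/6+2\varepsilon}\leq\Delta\leq X$ (so that \cref{lem:primesalmostall} applies) and bound the resulting absolute square by twice $|A_{qQ}-A_{q\Delta}|^2+|A_{q\Delta}|^2$, where $A_y:=(2/y)\sum_{x<n\leq x+y/2}f_\chi(n)$ and $f_\chi(n):=\varpi_2(n)\chi(n)-\delta_\chi P^*$. For the long-interval piece $|A_{q\Delta}|^2$, use the identity $\varpi_2(n)\chi(n)=\sum_{p_1\mid n,\,P<p_1\leq P^{1+\delta}}\chi(p_1)\Lambda(n/p_1)\chi(n/p_1)$ to peel off the small prime factor, apply Cauchy-Schwarz over $p_1$ at a cost of $\ll P^{1+\delta}/\log P$, and invoke \cref{lem:primesalmostall}, whose saving of $\log^{-A}X$ for arbitrary $A$ absorbs the Cauchy-Schwarz loss. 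For the short-versus-long difference $|A_{qQ}-A_{q\Delta}|^2$, apply \cref{lem:dpbound} with $h_1=qQ/2$ and $h_2=q\Delta/2$, reducing to mean-square estimates for $F_\chi(s):=\sum_{X<n\leq 2X}\varpi_2(n)\chi(n)n^{-s}$ on segments of the critical line $\Re s=1$. Then exploit the bilinear structure by factoring $F_\chi=M_\chi N_\chi$, with $M_\chi$ the short polynomial indexed by $p_1\in(P,P^{1+\delta}]$ and $N_\chi$ its long complement. Split the $t$-range according to whether $|M_\chi(1+it)|$ is pointwise small (substitute that bound and use a mean value theorem on $N_\chi$) or large; in the latter range, decompose $N_\chi$ via Heath-Brown's identity into type I and type II pieces, handling the type I sums by Cauchy-Schwarz plus a twisted fourth moment bound for partial sums of Dirichlet $L$-functions, and the type II sums by a further small/large dichotomy on one factor combined with the Hal\'asz-Montgomery inequality and standard large value theorems.

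The main obstacle is the final Dirichlet polynomial analysis. The loss incurred by summing over $q\leq Q_0$ and the $\varphi(q)$ characters (coming through the outer factor $q/\varphi(q)$ interacting with large value estimates that themselves typically count characters) must be beaten by the savings from the twisted fourth moment and large value theorems applied to $M_\chi$ and the Heath-Brown pieces of $N_\chi$. It is precisely the balance between these two sides that dictates the admissible size of $P=\log^B X$ in \eqref{eq:Pdef}, and hence the lower threshold $\log^{19+\varepsilon}X$ (respectively $\log^{85/2+\varepsilon}X$) on $H$ in \cref{thm:mainthm}. The technical execution of this step is the content of Sections \ref{sec:PrelimDP} and \ref{sec:mvdp}.
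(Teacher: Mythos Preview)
Your proposal is correct and follows essentially the same route as the paper: the reduction via orthogonality and Gallagher's Lemma to \eqref{eq:shortintervalsum}, the splitting $B^2(X)\ll B_1(X)+B_2(X)$ with intermediate length $\Delta$, the treatment of $B_2(X)$ by Cauchy--Schwarz over $p_1$ plus \cref{lem:primesalmostall}, and the treatment of $B_1(X)$ by \cref{lem:dpbound} followed by the factorisation $F=G_{v_0}H_{v_0}$, Heath-Brown decomposition of $H_{v_0}$, and the type~I/type~II dichotomy (fourth moment for type~I, Hal\'asz--Montgomery and large values for type~II) all match the paper's Propositions~\ref{prop:B2decomposition}--\ref{prop:Fmvtbound} and \cref{sec:mvdp}. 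The only omission is that for $\chi=\chi_0$ in the long-interval piece the paper handles the subtraction of $\delta_\chi P^*$ directly via the prime number theorem in short intervals before invoking \cref{lem:primesalmostall} for $\chi\neq\chi_0$, but this is a routine bookkeeping step.
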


\subsection{Reduction of the problem}
First, using Gallagher's Lemma (Lemma \ref{lem:gallagherslemma}), we will reduce the problem of estimating $B^2(X)$ to understanding almost primes in almost all short intervals. We first define the following: let $\Delta:=X/T_0^3$ with $T_0:=X^{1/100}$ and  
\begin{align}
B_1(X)&:=\sum_{q\leq Q_0}\frac{q}{\varphi(q)}\sum_{\chi(q)}\int_X^{2X}\Bigg|\frac{2}{qQ}\sum_{x<n\leq x+qQ/2}\Bigg(\chi(n)\varpi_2(n)-\delta_\chi\sum_{P<p\leq P^{1+\delta}}\frac{1}{p}\Bigg)\label{eq:B1def}\\
&\qquad\qquad\qquad\qquad\qquad -\frac{2}{q\Delta}\sum_{x<n\leq x+q\Delta/2}\Bigg(\chi(n)\varpi_2(n)-\delta_\chi\sum_{P<p\leq P^{1+\delta}}\frac{1}{p}\Bigg)\Bigg|^2dx,
\nonumber\\
B_2(X)&:=\sum_{q\leq Q_0}\frac{q}{\varphi(q)}\sum_{\chi(q)}\int_X^{2X}\Bigg|\frac{2}{q\Delta}\sum_{x<n\leq x+q\Delta/2}\Bigg(\chi(n)\varpi_2(n)-\delta_\chi\sum_{P<p\leq P^{1+\delta}}\frac{1}{p}\Bigg)\Bigg|^2dx.
\label{eq:B2def}
\end{align}
Now we are able to state a bound for $B^2(X)$ in terms of $B_1(X)$ and $B_2(X)$:

\begin{prop}\label{prop:B2decomposition}
We have that $B^2(X)\ll B_1(X)+B_2(X)$.
\end{prop}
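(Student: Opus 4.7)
The plan is to pass from the major-arc integral $B^2(X)$ to a sum of mean squares over short intervals in three steps: diagonalise the expanded square via orthogonality of Dirichlet characters, convert the resulting exponential-sum integrals into short-interval mean squares via Gallagher's Lemma (\cref{lem:gallagherslemma}), and finally split the short-interval sum using the elementary inequality $|A|^2 \leq 2|A-C|^2 + 2|C|^2$ to introduce the two length scales $qQ/2$ and $q\Delta/2$ appearing in \eqref{eq:B1def} and \eqref{eq:B2def}.

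Concretely, I would begin by using the definition \eqref{eq:majordef} of $\mathfrak{M}$ and expanding the square in $|b(a/q + \beta)|^2$, which produces a double sum over characters $\chi_1, \chi_2$ modulo $q$ weighted by $\chi_1(a)\overline{\chi_2(a)}$. The sum over residues $a \pmod q$ with $(a,q)=1$ forces $\chi_1 = \chi_2$ by orthogonality and contributes a factor of $\varphi(q)$. Combined with the standard bound $|\tau(\overline{\chi})|^2 \leq q$ (which holds uniformly for all characters modulo $q$, with equality in the primitive case and with $|\tau(\chi_0)|^2 = \mu^2(q) \leq 1$), this yields
\[
B^2(X) \ll \sum_{q \leq Q_0} \frac{q}{\varphi(q)} \sum_{\chi(q)} \int_{|\beta| \leq 1/(qQ)} \left|\sum_{X < n \leq 2X} \Bigl(\varpi_2(n)\chi(n) - \delta_\chi \sum_{P < p \leq P^{1+\delta}} \frac{1}{p}\Bigr) e(\beta n) \right|^2 d\beta.
\]

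Next I apply \cref{lem:gallagherslemma} to the inner integral with parameter $y = qQ/2$, which matches $1/(2y) = 1/(qQ)$. Since the coefficients $\varpi_2(n)\chi(n) - \delta_\chi \sum_{p} 1/p$ are bounded pointwise by $O(\log X)$, the Gallagher error term sums to at most $\sum_{q \leq Q_0} q \cdot qQ \log^2 X \ll Q_0^3 Q \log^2 X$, a fixed polylogarithmic power of $\log X$. The main term of Gallagher is precisely the mean square of the average $\frac{2}{qQ}\sum_{x<n\leq x+qQ/2}(\cdots)$. Applying $|A|^2 \leq 2|A-C|^2 + 2|C|^2$ with $C$ taken to be the analogous average $\frac{2}{q\Delta}\sum_{x<n\leq x+q\Delta/2}(\cdots)$ then splits this mean square into the expressions defining exactly $2B_1(X)$ and $2B_2(X)$, giving the claim.

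The reduction is essentially algebraic and I do not anticipate any serious obstacle. The two points requiring care are verifying the Gauss-sum bound uniformly over all (not just primitive) characters modulo $q$, and confirming that the polylogarithmic Gallagher error is swallowed by $B_1(X) + B_2(X)$; the latter will be harmless in the subsequent sections (where $B_1$ and $B_2$ are estimated to be much larger than any fixed power of $\log X$ but still of order $X/\log^{\eta} X$), but it is the one accounting step that a reader should verify before treating the proposition as purely formal.
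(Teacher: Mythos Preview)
Your proposal is correct and follows essentially the same route as the paper: expand the square, diagonalise via character orthogonality and the bound $|\tau(\overline{\chi})|^2\le q$, apply Gallagher's Lemma with $y=qQ/2$, and split the resulting short-interval mean square into the two length scales via $|A|^2\le 2|A-C|^2+2|C|^2$. The two caveats you flag (uniform Gauss-sum bound and the polylogarithmic Gallagher error) are handled identically in the paper, which simply declares the $Q_0^3Q\log^2 X$ term ``negligible'' against the eventual target $X/\log^{\eta}X$.
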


Then, if we can prove that $B_i(X)\ll X\log^{-\eta}X$ for $i=1,2$, we will immediately be able to conclude Proposition \ref{prop:B2bound}.

\begin{proof}
By definition, we have that $B^2(X)$ equals
\[
\sum_{q\leq Q_0}\sum_{\substack{1\leq a\leq q\\(a,q)=1}}\int_{|\beta|\leq\frac{1}{qQ}}\Bigg|\frac{1}{\varphi(q)}\sum_{\chi(q)}\tau(\overline{\chi})\chi(a){\sum_{X<n\leq2X}}\bigg(\chi(n)\varpi_2(n)-\delta_\chi\sum_{P<p\leq P^{1+\delta}}\frac{1}{p}\bigg)e(\beta n)\Bigg|^2d\beta.
\]
Expanding the square, we have that $B^2(X)$ equals
\begin{align*}
&\sum_{q\leq Q_0}\frac{1}{\varphi^2(q)}\sum_{\chi,\chi'(q)}\tau(\overline{\chi})\tau(\chi')\sum_{\substack{1\leq a\leq q\\(a,q)=1}}\chi(a)\overline{\chi'}(a)\sum_{X<m\leq2X}\left(\chi(m)\varpi_2(m)-\delta_\chi\sum_{P<p\leq P^{1+\delta}}\frac{1}{p}\right)\\
&\times\sum_{X<n\leq2X}\left(\overline{\chi'}(n)\varpi_2(n)-\delta_{\chi'}\sum_{P<p\leq P^{1+\delta}}\frac{1}{p}\right)\int_{|\beta|\leq\frac{1}{qQ}}e(\beta (m-n))d\beta.
\end{align*}
Now, using the definition of Dirichlet characters to trivially extend the sum over $a$ to all $1\leq a\leq q$, we may apply the character orthogonality relation
\[
\sum_{a=1}^q\chi(a)\overline{\chi'}(a)=
\begin{cases}
\varphi(q),&\text{ if }\chi=\chi',\\
0,&\text{ if }\chi\neq\chi',
\end{cases}
\]
to see that $B^2(X)$ is
\begin{align*}
&=\sum_{q\leq Q_0}\frac{1}{\varphi(q)}\sum_{\chi(q)}|\tau(\overline{\chi})|^2\int_{|\beta|\leq\frac{1}{qQ}}\left|\sum_{X<n\leq2X}\left(\chi(n)\varpi_2(n)-\delta_\chi\sum_{P<p\leq P^{1+\delta}}\frac{1}{p}\right)e(\beta n)\right|^2d\beta\\
&\ll\sum_{q\leq Q_0}\frac{q}{\varphi(q)}\sum_{\chi(q)}\int_{|\beta|\leq\frac{1}{qQ}}\left|\sum_{X<n\leq2X}\left(\chi(n)\varpi_2(n)-\delta_\chi\sum_{P<p\leq P^{1+\delta}}\frac{1}{p}\right)e(\beta n)\right|^2d\beta,
\end{align*}
where we have used that $\tau(\overline{\chi})\ll q^{1/2}$ in the last line. Now we apply Lemma \ref{lem:gallagherslemma} to the integral term to get that $B^2(X)$ is bounded by
\begin{equation}\label{eq:shortintervalsum}
\begin{aligned}
\ll
&\sum_{q\leq Q_0}\frac{q}{\varphi(q)}\sum_{\chi(q)}\int_X^{2X}\Bigg|\frac{2}{qQ}\sum_{x<n\leq x+qQ/2}\Bigg(\chi(n)\varpi_2(n)-\delta_\chi\sum_{P<p\leq P^{1+\delta}}\frac{1}{p}\Bigg)\Bigg|^2dx\\
&+Q\log^2X\sum_{q\leq Q_0}\sum_{\chi(q)}\frac{q^2}{\varphi(q)}.
\end{aligned}
\end{equation}
The second term contributes 
\[
Q\log^2X\sum_{q\leq Q_0}\sum_{\chi(q)}\frac{q^2}{\varphi(q)}\ll QQ_0^3\log^2X
\]
to $B^2(X)$, which is negligible. Let $\Delta=X/T_0^3$ with $T_0=X^{1/100}$, then we have that $B^2(X)$ is bounded by
\begin{align*}
\ll&
\sum_{q\leq Q_0}\frac{q}{\varphi(q)}\sum_{\chi(q)}\int_X^{2X}\Bigg|\frac{2}{qQ}\sum_{x<n\leq x+qQ/2}\Bigg(\chi(n)\varpi_2(n)-\delta_\chi\sum_{P<p\leq P^{1+\delta}}\frac{1}{p}\Bigg)\\
&\qquad\qquad\qquad\qquad\quad -\frac{2}{q\Delta}\sum_{x<n\leq x+q\Delta/2}\Bigg(\chi(n)\varpi_2(n)-\delta_\chi\sum_{P<p\leq P^{1+\delta}}\frac{1}{p}\Bigg)\Bigg|^2dx\\
&+\sum_{q\leq Q_0}\frac{q}{\varphi(q)}\sum_{\chi(q)}\int_X^{2X}\Bigg|\frac{2}{q\Delta}\sum_{x<n\leq x+q\Delta/2}\Bigg(\chi(n)\varpi_2(n)-\delta_\chi\sum_{P<p\leq P^{1+\delta}}\frac{1}{p}\Bigg)\Bigg|^2dx\\
=&B_1(X)+B_2(X),
\end{align*}
as claimed.
\end{proof}

\subsection{Bounding \texorpdfstring{$B_2(X)$}{B2(X)}} 
First, we prove the the following estimate for $B_2(X)$, which will be reduced to a Dirichlet character analogue of a problem on primes in almost all short intervals.

\begin{prop}\label{prop:B2Xbound} 
Let $C>0$ be fixed, then with $B_2(X)$ as defined in \eqref{eq:B2def} we have 
\[
B_2(X)\ll\frac{X}{\log^C X}.
\]
\end{prop}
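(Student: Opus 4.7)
The plan is to exploit the bilinear decomposition $n=p_1p_2$ built into $\varpi_2$ to separate the small prime $p_1$ from the short-interval character sum over $p_2$, then invoke \cref{lem:primesalmostall}. Since every $p_1>P>Q_0\ge q$ is coprime to $q$, the principal character satisfies $\chi_0(p_1)=1$, so I can rewrite the inner sum in the integrand of \eqref{eq:B2def} as
\[
\sum_{P<p_1\leq P^{1+\delta}}\chi(p_1)\Bigl(\sum_{y<p_2\le y+h_1}\chi(p_2)\log p_2-\delta_\chi h_1\Bigr)+O(1),
\]
with $y:=x/p_1$ and $h_1:=q\Delta/(2p_1)$; the matching of the subtracted main term works because $\delta_\chi=0$ unless $\chi=\chi_0$, in which case $\chi(p_1)=1$ and $\sum_{p_1}h_1=(q\Delta/2)\sum_{p_1}1/p_1$. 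Each bracket is precisely a twisted character sum over primes in the short interval $(y,y+h_1]$ compared with its expected PNT main term.

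Next I would apply the Cauchy--Schwarz inequality to the $p_1$-sum, at the cost of a factor $\pi(P^{1+\delta})\ll P^{1+\delta}/\log P$. A change of variable $y=x/p_1$ in the $x$-integral produces a Jacobian $p_1$ and rescales the outer range to $[X/p_1,2X/p_1]$. Replacing $\log p_2$ with $\Lambda(p_2)\chi(p_2)$ costs only the genuine prime powers, an error of $O(\sqrt{X/p_1}\log X)$ per $y$, which ultimately contributes at most a $\mathrm{polylog}(X)$ term to $B_2(X)$ because of the $1/\Delta^2$ prefactor with $\Delta=X^{97/100}$. We are then left with estimating, for each fixed $p_1$,
\[
\sum_{q\le Q_0}\sum_{\chi(q)}\int_{X/p_1}^{2X/p_1}\biggl|\sum_{y<n\le y+q\Delta'}\bigl(\Lambda(n)\chi(n)-\delta_\chi\bigr)\biggr|^2dy,\qquad \Delta':=\frac{\Delta}{2p_1},
\]
which is exactly the object bounded by \cref{lem:primesalmostall}. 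Since $p_1\le P^{1+\delta}$ is polylogarithmic in $X$, the hypotheses $Q_0\le\Delta'(X/p_1)^{-1/6-\varepsilon}$ and $\Delta'\ge(X/p_1)^{1/6+2\varepsilon}$ hold with enormous slack, and the lemma yields $\ll Q_0^3\Delta^2X/(p_1^3\log^A X)$ for any prescribed $A>0$.

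To conclude, the weight $\frac{q}{\varphi(q)(q\Delta)^2}\le\frac{1}{\Delta^2}$ appearing in $B_2(X)$ is uniformly bounded for $q\ge1$, so the weighted $q$-sum is controlled by the unweighted one from \cref{lem:primesalmostall}. Combining with $\sum_{P<p_1\le P^{1+\delta}}1/p_1^2\ll 1/P$ and the Cauchy--Schwarz and Jacobian factors gives
\[
B_2(X)\ll \frac{\pi(P^{1+\delta})\,Q_0^3\,X}{P\log^AX}\ll\frac{P^\delta Q_0^3 X}{\log^{A+1}X}\ll\frac{X}{\log^CX}
\]
upon choosing $A$ sufficiently large relative to $A'$, $B$, $\delta$ and $C$. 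The argument is essentially a reduction to \cref{lem:primesalmostall} via Cauchy--Schwarz on the small prime, so no serious obstacle appears beyond bookkeeping: verifying that the reduced parameters $(X/p_1,\Delta/(2p_1))$ remain safely inside the hypothesis ranges of \cref{lem:primesalmostall}, that the weight $q/\varphi(q)$ is absorbed by the arbitrary power of $\log X$ saved by that lemma, and that prime-power and boundary corrections are negligible. All real work sits upstream in \cref{lem:primesalmostall}, adapted from \cite{primesshortap}.
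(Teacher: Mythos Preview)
Your proposal is correct and follows essentially the same route as the paper: apply Cauchy--Schwarz to the sum over the small prime $p_1$, change variables $y=x/p_1$, and invoke \cref{lem:primesalmostall} on the resulting short-interval prime sums. The one minor difference is that the paper treats the principal character separately via the prime number theorem in short intervals before applying \cref{lem:primesalmostall} only to the $\chi\neq\chi_0$ contribution, whereas you fold the $\delta_\chi h_1$ subtraction into the bracket and handle all characters uniformly through \cref{lem:primesalmostall}; your unified treatment is slightly cleaner and costs nothing since the lemma already includes the $\delta_\chi$ term.
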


\begin{proof}
We separate the cases $\chi=\chi_0$ and $\chi\neq\chi_0$.
If $\chi=\chi_0$, we have that
\[
\frac{2}{q\Delta}\sum_{x<n\leq x+q\Delta/2}\left(\varpi_2(n)-\sum_{P<p\leq P^{1+\delta}}\frac{1}{p}\right)
=\frac{2}{q\Delta}\sum_{P<p_1\leq P^{1+\delta}}\sum_{\frac{x}{p_1}<p_2\leq\frac{x+q\Delta/2}{p_1}}\log p_2-\sum_{P<p\leq P^{1+\delta}}\frac{1}{p}+O\left(\frac{1}{q\Delta}\right).
\]
We now apply the prime number theorem in short intervals (see, for example, \cite[Chapter 10.5]{iwanieckowalski}), finding that
\[
\frac{2}{q\Delta}\sum_{P<p_1\leq P^{1+\delta}}\sum_{\frac{x}{p_1}<p_2\leq\frac{x+q\Delta/2}{p_1}}\log p_2
=\sum_{P<p\leq P^{1+\delta}}\frac{1}{p}+O\left(\exp(-c(\log x)^{1/3-\varepsilon})\right).
\]
Substituting this back into the above, we have that
\[
\frac{2}{q\Delta}\sum_{x<n\leq x+q\Delta/2}\left(\varpi_2(n)-\sum_{P<p\leq P^{1+\delta}}\frac{1}{p}\right)
=O\left(\exp(-c(\log x)^{1/3-\varepsilon})\right).
\]
Returning to the integral and summing over $q$, we find that the contribution of the principal character to $B_2(X)$ is
\[
\ll X Q_0\exp(-c'(\log X)^{1/3-\varepsilon}),
\]
which is acceptable by the choice of $Q_0$.

We now consider the case $\chi\neq\chi_0$. By the definition of $\varpi_2$ and the Cauchy-Schwarz inequality, we have that
\begin{equation}\label{eq:B2CS}
\begin{aligned}
B_2(X)
&=\sum_{q\leq Q_0}\frac{q}{\varphi(q)}\sum_{\substack{\chi(q)\\\chi\neq\chi_0}}\int_X^{2X}\left|\frac{2}{q\Delta}\sum_{x<n\leq x+q\Delta/2}\chi(n)\varpi_2(n)\right|^2dx\\
&=\sum_{q\leq Q_0}\frac{q}{\varphi(q)}\sum_{\substack{\chi(q)\\\chi\neq\chi_0}}\int_X^{2X}\Bigg|\frac{2}{q\Delta}\sum_{P<p_1\leq P^{1+\delta}}\chi(p_1)\sum_{\frac{x}{p_1}<p_2\leq\frac{x+q\Delta/2}{p_1}}\chi(p_2)\log p_2\Bigg|^2dx\\
&\ll\frac{P^{1+\delta}}{\log P}\sum_{q\leq Q_0}\frac{q}{\varphi(q)}\sum_{P<p_1\leq P^{1+\delta}}\frac{4}{(q\Delta)^2}\sum_{\substack{\chi(q)\\\chi\neq\chi_0}}\int_X^{2X}\Bigg|\sum_{\frac{x}{p_1}<p_2\leq\frac{x+q\Delta/2}{p_1}}\chi(p_2)\log p_2\Bigg|^2dx.
\end{aligned}
\end{equation}
We make the change of variables $u=x/p_1$ to the integral, so that
\[
\int_X^{2X}\Bigg|\sum_{\frac{x}{p_1}<p_2\leq\frac{x+q\Delta/2}{p_1}}\chi(p_2)\log p_2\Bigg|^2dx
=p_1\int_{X/p_1}^{2X/p_1}\Bigg|\sum_{u<p_2\leq u+\frac{q\Delta}{2p_1}}\chi(p_2)\log p_2\Bigg|^2du.
\]
First, in the case $H\leq\exp((\log X)^{\varepsilon^3})$, we now apply Lemma \ref{lem:primesalmostall} to get that
\[
B_2(X)\ll\frac{P^{1+\delta}}{\log P}\frac{X Q_0\log\log Q_0}{\log^DX}\sum_{P<p\leq P^{1+\delta}}\frac{1}{p^2}
\ll\frac{X}{\log^C X}
\]
for $C>0$, as required. In the case $H>\exp((\log X)^{\varepsilon^3})$, we split the sum over $P<p_1\leq P^{1+\delta}$ in \eqref{eq:B2CS} into dyadic intervals and again apply Lemma \ref{lem:primesalmostall} to obtain the required bound.
\end{proof}

\subsection{Bounding \texorpdfstring{$B_1(X)$}{B1(X)}}
It now remains to prove the required bound for $B_1(X)$. This problem can be reduced to finding cancellation in the mean square of a Dirichlet polynomial.

\begin{prop} \label{prop:B1Xbound}
Let $\varepsilon>0$ be fixed sufficiently small. With $B_1(X)$ as defined in \eqref{eq:B1def}, there exists some $\eta=\eta(\varepsilon)>0$ such that
\[
B_1(X)\ll\frac{X}{\log^\eta X}.
\]
\end{prop}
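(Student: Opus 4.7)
The plan is to apply the Parseval-type bound of \cref{lem:dpbound} with $h_1=qQ/2$ and $h_2=q\Delta/2$ to the coefficient sequence $a_n=\chi(n)\varpi_2(n)-\delta_\chi\sum_{P<p\leq P^{1+\delta}}p^{-1}$. The subtracted constant contributes the same quantity to both short-interval averages up to an error $O(1/h_1)$, so it is annihilated by the difference; hence one may work with the coefficients $\chi(n)\varpi_2(n)$ and the associated Dirichlet polynomial
\[
F_\chi(s):=\sum_{X<n\leq 2X}\frac{\chi(n)\varpi_2(n)}{n^s}.
\]
The pointwise term in \cref{lem:dpbound} is harmless since $|a_n|\ll\log X$ and $T_0=X^{1/100}$, so matters reduce to bounding
\[
\sum_{q\leq Q_0}\frac{q}{\varphi(q)}\sum_{\chi(q)}\left(\int_{T_0}^{X/h_1}|F_\chi(1+it)|^2\,dt+\max_{T\geq X/h_1}\frac{X}{Th_1}\int_T^{2T}|F_\chi(1+it)|^2\,dt\right)\ll\frac{X}{\log^\eta X}.
\]

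The next step exploits the bilinear structure of $\varpi_2$. A dyadic decomposition of the ranges $P<p_1\leq P^{1+\delta}$ and of the complementary range for $p_2$, together with a mild smooth cutoff to dispose of the cross-condition $X<p_1p_2\leq 2X$, essentially factorises $F_\chi$ as a sum of $O(\log X)$ products
\[
P_j(s,\chi)\,Q_k(s,\chi):=\Bigl(\sum_{P_j<p\leq 2P_j}\frac{\chi(p)}{p^s}\Bigr)\Bigl(\sum_{N_k<p\leq 2N_k}\frac{\chi(p)\log p}{p^s}\Bigr),
\]
with $P_j\asymp P$ and $P_jN_k\asymp X$, and each factor supported on primes coprime to $q$.

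Following the scheme of Ter\"av\"ainen, I would split the integration range according to whether the short factor $|P_j(1+it,\chi)|$ is pointwise small or large. On the small set, the pointwise bound on $P_j$ combined with the mean value theorem for Dirichlet polynomials applied to $Q_k$ directly yields a power-of-$\log X$ saving, provided $P$ is a sufficiently large power of $\log X$. On the large set, $Q_k$ is decomposed via Heath-Brown's identity into sums of products of Dirichlet polynomials of controlled lengths, which split into type I sums (one factor long and smooth, hence essentially a twisted partial sum of $L(s,\chi)$, treated by Cauchy-Schwarz and the twisted fourth moment bound for partial sums of Dirichlet $L$-functions) and type II sums (treated by a further split: the small-values range by the mean value theorem, and the large-values range by the Hal\'asz-Montgomery inequality combined with Huxley-type large values theorems, using the measure bound for the large-values set of $P_j$ as input).

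The main obstacle will be the bookkeeping: choosing all cutoff thresholds and Heath-Brown decomposition lengths so that each sub-contribution is $\ll X\log^{-\eta}X$ uniformly in $q\leq Q_0$ and $\chi\bmod q$. The outer sum $\sum_{q\leq Q_0}(q/\varphi(q))\sum_{\chi(q)}$ introduces a $Q_0^3$-type loss that must be absorbed by the large values estimates, which is what forces $Q_0$ to remain a small power of $\log X$, and the threshold $P=\log^BX$ with $B\geq 17+\varepsilon$ (respectively $85/2+\varepsilon$ for larger $H$) is dictated precisely by the need for the short factor $P_j$ to be long enough that the small-values and large-values contributions both dominate the combined $Q_0^3$ and Dirichlet-polynomial-length log losses.
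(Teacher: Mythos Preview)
Your proposal is correct and follows essentially the same route as the paper. The paper packages the Dirichlet polynomial estimates into a separate proposition (\cref{prop:Fmvtbound}), first removing the $\log p_2$ weight by partial summation, and then carries out exactly the programme you describe: factorise via \cref{lem:factoriseF}, split according to the size of the short factor, apply Heath-Brown's identity on the large set, and handle type~I pieces via the twisted fourth moment and type~II pieces via Hal\'asz--Montgomery plus large values (the paper uses Jutila's large value theorem rather than Huxley's, but this is a minor variation on your sketch).
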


To prove this result, we will need the following variant of a result of Ter\"av\"ainen \cite{teravainen} on the mean square of the Dirichlet polynomial
\begin{equation}\label{eq:Dirichletpolydef}
F(s,\chi):=\sum_{\substack{X<p_1p_2\leq2X\\P<p_1\leq P^{1+\delta}}}\frac{\chi(p_1)\chi(p_2)}{(p_1p_2)^s},
\end{equation}
 to be proved in Section \ref{sec:mvdp}:
 
\begin{prop}\label{prop:Fmvtbound}
Let $\varepsilon>0$ be fixed sufficiently small. Define $T_0=X^{1/100}$ and $F(s,\chi)$ to be the Dirichlet polynomial defined in \eqref{eq:Dirichletpolydef}, with $P$ and $\delta>0$ as in Section \ref{sec:setup}. Then, for $T\geq T_0$, there exists some $\eta=\eta(\varepsilon)>0$ such that
\begin{equation}\label{eq:qFmv}
B_3(X):=\sum_{q\leq Q_0}\frac{q}{\varphi(q)}\sum_{\chi(q)}\int_{T_0}^{T}|F(1+it,\chi)|^2dt
\ll \frac{1}{Q_0\log^{2+\eta}X} \sum_{q\leq Q_0} \left(\frac{qTP\log X}{X}+\frac{q}{\varphi(q)}\right).
\end{equation}
\end{prop}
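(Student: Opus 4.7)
The plan is to adapt the mean-value argument of Ter\"av\"ainen \cite{teravainen}[Sections 2--4] for Dirichlet polynomials supported on $E_2$ numbers to the character-twisted setting, maintaining uniformity over moduli $q\leq Q_0$ and characters $\chi\pmod{q}$. Observe that the right-hand side of \eqref{eq:qFmv} carries an extra factor of $(Q_0\log^{2+\eta}X)^{-1}$ relative to what the trivial mean value theorem would give, so genuine cancellation beyond the second moment must be extracted from the bilinear structure of $F(s,\chi)$.

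First I would factorise
\[
F(s,\chi) = P(s,\chi)\,G(s,\chi)
\]
modulo boundary corrections from the coupling $X<p_1p_2\leq 2X$ (which can be decoupled by dyadic subdivision of $p_1$ together with a Perron-type smoothing), where
\[
P(s,\chi) := \sum_{P<p_1\leq P^{1+\delta}}\frac{\chi(p_1)}{p_1^s}
\]
is the short Dirichlet polynomial of length $\asymp P$ and $G(s,\chi)$ is the complementary longer polynomial of length $\asymp X/P$. Next split the range of integration $[T_0,T]$ into $\mathcal{T}_{\mathrm{small}} := \{t : |P(1+it,\chi)|\leq V\}$ and its complement $\mathcal{T}_{\mathrm{large}}$, with $V$ to be tuned. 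On $\mathcal{T}_{\mathrm{small}}$, the pointwise bound $|P|\leq V$ combined with the mean value theorem for Dirichlet polynomials applied to $|G|^2$ on $[T_0,T]$ gives a contribution of the desired shape, provided $V$ is chosen so that $V^2$ absorbs an extra $\log^{2+\eta}X$ factor beyond the trivial estimate.

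On $\mathcal{T}_{\mathrm{large}}$, apply Heath-Brown's identity to $G(s,\chi)$ to decompose it into $O(\log X)$ Dirichlet polynomials whose coefficients are bounded Dirichlet convolutions. These I would organise by the length profile of their factors. In the type I case, where some factor is long enough to be approximated by a partial sum of $L(s,\chi)$, apply Cauchy-Schwarz and reduce to a character-averaged twisted fourth moment of Dirichlet $L$-functions, which can be handled uniformly in $q\leq Q_0$ and $\chi\pmod{q}$ by the standard twisted fourth moment estimate. In the type II case, further split the $t$-range according to the pointwise size of another factor in the decomposition: the small range is handled by a pointwise bound together with the mean value theorem, while the large range is treated by the Hal\'asz-Montgomery inequality combined with Huxley/Jutila-type large-values theorems applied to whichever factor is short enough to exploit such bounds.

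The main obstacle will be the type II analysis on $\mathcal{T}_{\mathrm{large}}$. One has to balance the Hal\'asz-Montgomery input against the large-values output uniformly in $q$ and $\chi$, while carrying the weight $q/\varphi(q)$ through a large-sieve-type inequality without destroying the required $\log^{-2-\eta}X$ saving. The key numerical input will be Montgomery's bound on the measure of $\mathcal{T}_{\mathrm{large}}$ via the fourth moment of $P(1+it,\chi)$ summed over all $q\leq Q_0$ and $\chi\pmod q$. A careful choice of Heath-Brown cut-offs, aligned with $P$, $X/P$, $T_0$ and $Q_0$, should ensure that every resulting contribution fits the shape of the right-hand side of \eqref{eq:qFmv}.
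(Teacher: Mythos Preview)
Your outline follows the same architecture as the paper: factorise into a short polynomial over $p_1\in(P,P^{1+\delta}]$ (the paper's $G_{v_0}$) and a long polynomial over $p_2$ (the paper's $H_{v_0}$), split according to the size of the short factor, apply Heath-Brown to the long factor on the large set, and separate into Type~I (twisted fourth moment) and Type~II (Hal\'asz--Montgomery plus large values). However, one essential device is missing from your description, and without it neither the Type~I step nor the Type~II ``small'' subcase closes.

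On $\mathcal{T}_{\mathrm{large}}$ you know $|P(1+it,\chi)|>V=P^{-\alpha_1}$. The paper exploits this not merely as a restriction on the set but as an \emph{amplifier}: before applying any mean value or moment estimate, one inserts
\[
1\;\leq\;\bigl(|P(1+it,\chi)|\,P^{\alpha_1}\bigr)^{2(k-1)}
\]
for a carefully chosen $k$. In the Type~II subcase where a Heath-Brown factor satisfies $|M_1|\leq M_1^{-\alpha_2}$ with $M_1=X^{\nu}$, one takes $k=\lceil\log M_1/\log P\rceil$ so that $P^k$ has length $\asymp M_1$; then the mean value theorem is applied to $|P^{k}M_2|^2$, whose length is $\asymp X$. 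Your phrase ``pointwise bound together with the mean value theorem'' suggests bounding $|M_1|$ pointwise and applying the mean value to $|P\cdot M_2|^2$ directly; that loses $M_1^{1-2\alpha_2}=X^{\nu(1-2\alpha_2)}$, a genuine power of $X$. Similarly in the Type~I case the paper takes $l\approx\varepsilon\log X/\log P$ so that the twist in the fourth-moment estimate is $P^{2l}\asymp X^{\varepsilon}$; with $M=P^2$ the bound $Q_0T/(MN_1^2)$ combined via Cauchy--Schwarz with $\int|N_2|^4\asymp T/N_2^2$ is far too large.

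A smaller point: your final paragraph locates the large-values input at $P$, but the paper applies Jutila's and the Montgomery-type large-value theorems to the Heath-Brown factor $M_1$ (of length $X^{\nu}$, $0<\nu\leq 1/3$) on the set where $|M_1|>M_1^{-\alpha_2}$, obtaining $\ll X^{1/2-\nu-\varepsilon^2}$ well-spaced points; this bound is then fed into Hal\'asz--Montgomery applied to $G_{v_0}\,M_2$. Bounding $|\mathcal{T}_{\mathrm{large}}|$ directly via moments of $P$ alone would not suffice, since $P$ has only logarithmic length.
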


\begin{proof}[Proof of Proposition \ref{prop:B1Xbound} assuming Proposition \ref{prop:Fmvtbound}.]
First we consider when $\chi=\chi_0$ as we have a different summand in this case. We have 
\begin{equation}\label{eq:principalsum}
\frac{2}{qQ}\sum_{x<n\leq x+qQ/2}\left(\varpi_2(n)-\sum_{P<p\leq P^{1+\delta}}\frac{1}{p}\right)
-\frac{2}{q\Delta}\sum_{x<n\leq x+q\Delta/2}\left(\varpi_2(n)-\sum_{P<p\leq P^{1+\delta}}\frac{1}{p}\right).
\end{equation}
We first consider the contribution of the second and fourth terms, namely
\begin{equation}\label{eq:principalsum2}
\sum_{P<p\leq P^{1+\delta}}\frac{1}{p}\left(\frac{2}{q\Delta}\sum_{x<n\leq x+q\Delta/2}1
-\frac{2}{qQ}\sum_{x<n\leq x+qQ/2}1\right)
\ll\frac{1}{qQ}+\frac{1}{q\Delta}.
\end{equation}
Returning to our expression for $B_1(X)$, by our choice of $Q_0,Q$ and $\Delta$ we have that \eqref{eq:principalsum2} contributes
\[
\ll\sum_{q\leq Q_0}\frac{q}{\varphi(q)}\int_X^{2X}\frac{1}{(qQ)^2}dx
\ll\frac{X}{Q^2}\sum_{q\leq Q_0}\frac{1}{q\varphi(q)}
\ll\frac{X}{Q^2},
\]
which is acceptable. Therefore, when considering the principal character $\chi_0$, we need only to bound
\begin{align*}
&\sum_{q\leq Q_0}\frac{q}{\varphi(q)}\sum_{\chi(q)}\int_X^{2X}\left|\frac{2}{qQ}\sum_{x<n\leq x+qQ/2}\varpi_2(n)
-\frac{2}{q\Delta}\sum_{x<n\leq x+q\Delta/2}\varpi_2(n)\right|^2dx\\
=&\sum_{q\leq Q_0}\frac{q}{\varphi(q)}\sum_{\chi(q)}\int_X^{2X}\left|\frac{2}{qQ}\sum_{x<n\leq x+qQ/2}\varpi_2(n)\chi_0(n)
-\frac{2}{q\Delta}\sum_{x<n\leq x+q\Delta/2}\varpi_2(n)\chi_0(n)\right|^2dx,
\end{align*}
noting that in the range of summation we must have $(n,q)=1$, i.e. $\varpi_2(n)\chi_0(n)=\varpi_2(n)$ for each $X<n\leq2X$. Thus, from now on we are able to unify the treatment of the principal character $\chi_0$ with the rest of the characters modulo $q$ at the cost of a negligible error.

We now apply Lemma \ref{lem:dpbound} with $h_1=qQ/2$ and $h_2=q\Delta/2$ to the integral with respect to $x$ to get
\[
B_1(X)\ll
X\sum_{q\leq Q_0}\frac{q}{\varphi(q)}\sum_{\chi(q)}\Bigg(\frac{\log^2 X}{T_0^2}+\int_{T_0}^{\frac{2X}{qQ}}|F_1(1+it,\chi)|^2dt
+\max_{T\geq\frac{2X}{qQ}}\frac{X}{TqQ}\int_T^{2T}|F_1(1+it,\chi)|^2dt\Bigg),
\]
with $T_0=X^{1/100}$ and
\begin{equation}\label{eq:chardp}
F_1(s,\chi):=\sum_{X<n\leq2X}\frac{\varpi_2(n)\chi(n)}{n^s}=\sum_{\substack{X<p_1p_2\leq2X\\P<p_1\leq P^{1+\delta}}}\frac{\chi(p_1)\chi(p_2)\log p_2}{(p_1p_2)^s}.
\end{equation}
The choice of $T_0$ ensures that the first term is negligible. Applying partial summation, we have that $B_1(X)$ is bounded by
\[
\ll X\log^2 X\sum_{q\leq Q_0}\frac{q}{\varphi(q)}\sum_{\chi(q)}\left(\int_{T_0}^{\frac{2X}{qQ}}|F(1+it,\chi)|^2dt+\max_{T\geq\frac{2X}{qQ}}\frac{X}{TqQ}\int_T^{2T}|F(1+it,\chi)|^2dt\right).
\]
We now apply Proposition \ref{prop:Fmvtbound}. Note that we have $P\log X = Q$, so that the first term in our bound for $B_1(X)$ is bounded by
\[
\ll \frac{X}{Q_0\log^{\eta}X} \sum_{q\leq Q_0} \left(\frac{P\log X}{Q}+\frac{q}{\varphi(q)}\right)\ll\frac{X}{\log^\eta X},
\]
as needed. For the second term, we want to bound
\[
\frac{X^2\log^2X}{Q}\sum_{q\leq Q_0}\frac{1}{\varphi(q)}\max_{T\geq \frac{2X}{qQ}}\frac{1}{T}\sum_{\chi(q)}\int_T^{2T}|F(1+it,\chi)|^2dt.
\]
Applying Proposition \ref{prop:Fmvtbound}, we have the bound 
\[
\ll \frac{X^2}{Q_0Q\log^\eta X}\sum_{q\leq Q_0} \max_{T\geq\frac{2X}{qQ}}\left(\frac{P\log X}{X}+\frac{1}{T\varphi(q)}\right)
\ll\frac{X}{\log^\eta X},
\]
again using that $P\log X=Q$. Overall we have that 
\[
B_1(X)\ll\frac{X}{\log^\eta X},
\]
for some $\eta=\eta(\varepsilon)>0$, as required.
\end{proof}

\section{Preliminaries on Dirichlet Polynomials}
\label{sec:PrelimDP}

Before we can prove Proposition \ref{prop:Fmvtbound}, we first need the following preliminary lemmas on Dirichlet polynomials. 

\subsection{Pointwise Bound}

After we factorise our Dirichlet polynomial, there will be instances where the best we can do is use a pointwise bound. Before we state this bound, we need the following definition of a well-spaced set.

\begin{defn}[Well-Spaced Set]\label{def:wellspaced}
We say a set $\mathcal{T}$ is \textit{well-spaced} if for any $t,u\in\mathcal{T}$ with $t\neq u$ we have that $|t-u|\geq1$.
\end{defn}

\begin{lemma}[Pointwise Bound]\label{lem:pointwise}
Let $\mathcal{S}$ be a set of pairs $(t,\chi)$ with $t\in[-T,T]$ and $\chi$ a Dirichlet character mod $q$ which is well-spaced (i.e. if $(t,\chi),(u,\chi)\in\mathcal{S}$ then $|t-u|\geq1$). Suppose that $\min\{|t|:(t,\chi)\in\mathcal{S}\}\gg\log^AN$ for all $A>0$ if $\chi=\chi_0$. Let
\[
P(s,\chi):=\sum_{\substack{N<p_1\cdots p_k\leq2N\\p_1,\ldots,p_k\geq z}}\frac{\chi(p_1)\cdots \chi(p_k)}{(p_1\cdots p_k)^{1+it}},
\]
where $z\geq\exp(\log^{9/10}N)$. Then for any $C>0$ we have
\[
|P(1+it,\chi)|\ll\frac{1}{\log^CN}.
\]
\end{lemma}

\begin{proof}
This is \cite[Lemma~10.7]{harmansieves}.
\end{proof}

\begin{defn}[Prime-factored polynomial, \cite{teravainen}]\label{def:primefactored}
Let $M\geq1$ and
\[
M(s,\chi)=\sum_{M<m\leq2M}\frac{a_m\chi(m)}{m^s}
\]
be a Dirichlet polynomial with $|a_m|\ll d_r(m)$ for some fixed $r$. We say that $M(s,\chi)$ is \textit{prime-factored} if for each $C>0$ we have 
\[
\sup_{(t,\chi)\in\mathcal{S}}|M(1+it,\chi)|\ll\frac{1}{\log^CM}
\]
when $\exp((\log M)^{1/3})\leq t\leq M^{C\log\log M}$, where $\mathcal{S}$ is as defined in the previous lemma.
\end{defn}

\subsection{Decomposing Dirichlet Polynomials}
As in the work of Ter{\"a}v{\"a}inen \cite{teravainen} and Matom{\"a}ki, Radziwi{\l}{\l} \cite{mrmult}, we take advantage of the bilinear structure to factorise our Dirichlet polynomial.

\begin{lemma}[Factorisation of Dirichlet Polynomials]\label{lem:factoriseF}
Define 
\[
F(s):=\sum_{\substack{X<mn\leq2X\\M\leq m\leq M'}}\frac{a_mb_n}{(mn)^s}
\]
for some $M'>M\geq2$ and arbitrary complex numbers $a_m,b_n$. Let $U\geq1$ and define
\[
A_v(s):=\sum_{e^{\frac{v}{U}}\leq m<e^{\frac{v+1}{U}}}\frac{a_m}{m^s},\quad
B_v(s):=\sum_{Xe^{-\frac{v}{U}}<n\leq2Xe^{-\frac{v}{U}}}\frac{b_n}{n^s}.
\]
Then
\begin{equation}\label{eq:factoredF}
F(s)=\sum_{v\in I\cap\mathbb{Z}}A_v(s)B_v(s)+\sum_{\substack{k\in[Xe^{-1/U},Xe^{1/U}]\\\text{or }k\in[2X,2Xe^{1/U}]}}\frac{d_k}{k^s}
\end{equation}
where $I=[U\log M,U\log M']$ and 
\begin{equation}\label{eq:dkdefn}
|d_k|\leq\sum_{k=mn}|a_mb_n|.
\end{equation}
\end{lemma}

\begin{proof}
This is \cite[Lemma~2]{teravainen} (see also \cite[Lemma~12]{mrmult}).
\end{proof}

In some cases we will use the Heath-Brown identity to decompose a long polynomial into products of shorter polynomials. 

\begin{lemma}[Heath-Brown decomposition]\label{lem:heathbrown}
Let $k\geq1$ be a fixed integer, $T\geq2$ and fix $\varepsilon>0$. Define the Dirichlet polynomial $P(s,\chi):=\sum_{P\leq p<P'}\chi(p)p^{-s}$ with $P\gg T^\varepsilon$, $P'\in\left[P+\frac{P}{\log T},2P\right]$. Then, there exist Dirichlet polynomials $Q_1(s,\chi),\ldots,Q_L(s,\chi)$ and a constant $C>0$ such that $L\leq\log^{C}X$ and 
\[
|P(1+it,\chi)|\ll(\log^{C}X)(|Q_1(1+it,\chi)|+\cdots+|Q_L(1+it,\chi)|)
\]
for all $t\in[-T,T]$. Here, each $Q_j(s,\chi)$ is of the form
\[
Q_j(s,\chi)=\prod_{i\leq J_j}M_i(s,\chi),\quad J_j\leq 2k,
\]
where each $M_i(s,\chi)$ is a prime-factored Dirichlet polynomial (depending on $j$) of the form
\[
\sum_{M_i<n\leq2M_i}\frac{\chi(n)\log n}{n^s},\;\sum_{M_i<n\leq2M_i}\frac{\chi(n)}{n^s},\text{ or }\sum_{M_i<n\leq2M_i}\frac{\mu(n)\chi(n)}{n^s},
\]
whose lengths satisfy $M_1\cdots M_J=X^{1+o(1)},M_i\gg\exp\left(\frac{\log P}{\log\log P}\right)$. Furthermore, if in fact $M_i>X^{1/k}$, then $M_i(s,\chi)$ is of the form
\begin{equation}\label{eq:dirichletlsum}
\sum_{M_i<n\leq2M_i}\frac{\chi(n)\log n}{n^s}\text{ or }\sum_{M_i<n\leq2M_i}\frac{\chi(n)}{n^s}.
\end{equation}
\end{lemma}

\begin{proof}
This is the Dirichlet character analogue of \cite[Lemma 10]{teravainen}, which follows from the same argument.
\end{proof}

\subsection{Mean Value Theorems for Dirichlet Polynomials}
Now we state two mean value theorems, the first being the classical result:

\begin{lemma}[Mean Value Theorem]\label{lem:mvt}
Let $q,X\geq 1$ and let $a_n$ be arbitrary complex numbers with $F(s,\chi):=\sum_{X<n\leq2X}\frac{a_n\chi(n)}{n^s}$. Then
\[
\sum_{\chi(q)}\int_{-T}^{T}|F(it,\chi)|^2dt
\ll\left(\varphi(q)T+\frac{\varphi(q)}{q}X\right)\sum_{\substack{X<n\leq2X\\(n,q)=1}}|a_n|^2.
\]
\end{lemma}

\begin{proof}
See, for example, \cite[Chapter 6, Eq. (6.14)]{montgomery}.
\end{proof}

Next we state a variant of the mean value theorem which will allow us to save a $\log X$ in certain parts of the proof.

\begin{lemma}\label{lem:mvtvariant}
With the same assumptions as Lemma \ref{lem:mvt}, we have that
\[
\sum_{\chi(q)}\int_{-T}^{T}|F(it,\chi)|^2dt
\ll T\varphi(q)\Bigg(\sum_{\substack{X<n\leq2X\\(n,q)=1}} |a_n|^2+\sum_{\substack{1\leq h\leq\frac{X}{T}\\q\mid h}}\sum_{\substack{X<n\leq2X\\(n(n+h),q)=1}}|a_{n+h}||a_n|\Bigg).
\]
\end{lemma}

\begin{proof}
This is the Dirichlet character analogue of \cite[Lemma~4]{teravainen}, which follows from \cite[Lemma~7.1]{iwanieckowalski}. The proof is contained in the proof of \cite[Lemma 5.2]{mrt2}.
\end{proof}

After factorising the Dirichlet polynomial $F$ and splitting the domain of integration according to the size of the factors, there will be cases where the mean value is taken over a well-spaced set. In this case, we will apply the Hal{\'a}sz-Montgomery inequality:

\begin{lemma}[Hal{\'a}sz-Montgomery Inequality]\label{lem:HalaszMont}
Let $T\geq1$, $q\geq2$. Let $\mathcal{S}$ be a well-spaced set of pairs $(t,\chi)$ with $t\in[-T,T]$ and $\chi$ a Dirichlet character mod $q$. With the same assumptions as Lemma \ref{lem:mvt}, we have that
\[
\sum_{(t,\chi)\in\mathcal{S}}|F(it,\chi)|^2
\ll\left(\frac{\varphi(q)}{q}X+|\mathcal{S}|(qT)^{1/2}\right)(\log(2qT))\sum_{\substack{X<n\leq2X\\(n,q)=1}} |a_n|^2.
\]
\end{lemma}

\begin{proof}
This is \cite[Lemma 7.4]{klurman}.
\end{proof}

\subsection{Large Value Theorems}
There will be subsets of the domain of integration where a short Dirichlet polynomial factor is large, in which case we apply the following large value theorem.

\begin{lemma}[Large Value Theorem]\label{lem:largevalues}
Let $P\geq1,V>0$, $|a_p|\leq1$ and $F(s,\chi)=\sum_{P<p\leq2P}\frac{a_p\chi(p)}{p^s}$. Let $\mathcal{S}\subset[-T,T]\times\{\chi\mod q\}$ be a well-spaced set such that $|F(1+it,\chi)|\geq V$ for all $(t,\chi)\in\mathcal{S}$. Then
\[
|\mathcal{S}|\ll (qT)^{\frac{2\log(1/V)}{\log P}}V^{-2}\exp\left((1+o(1))\frac{\log(qT)\log\log(qT)}{\log P}\right).
\]
\end{lemma}

\begin{proof}
This is the Dirichlet character analogue of \cite[Lemma~6]{teravainen} and \cite[Lemma~8]{mrmult}. Also see \cite[Lemma 7.5]{klurman}.
\end{proof}

\begin{remark}
As remarked in \cite[Remark 6]{teravainen}, this lemma can still be applied to polynomials with coefficients not only supported on the primes as long as we have $P\gg X^{\varepsilon}$, as will be the case in our application.
\end{remark}

Alternatively, in the case that we have a longer Dirichlet polynomial factor which is large, we will apply a result of Jutila on large values.

\begin{lemma}[Jutila's Large Value Theorem]\label{lem:Jutilalarge}
Let $\varepsilon>0$ be fixed, $|a_n|\leq d_r(n)$ for some fixed $r$ and $F(s,\chi)=\sum_{X<n\leq2X}\frac{a_n\chi(n)}{n^s}$. Let $k$ be a fixed positive integer and $\mathcal{S}\subset[-T,T]\times\{\chi\mod q\}$ be a well-spaced set such that $|F(1+it,\chi)|\geq V$ for all $(t,\chi)\in\mathcal{S}$. Then,
\[
|\mathcal{S}|\ll\left(V^{-2}+\left(\frac{qTV^{-4}}{X^2}\right)^k+\frac{qTV^{-8k}}{X^{2k}}\right)(qTX)^{\varepsilon}.
\]
\end{lemma}

\begin{proof}
This is the first bound of the main theorem in \cite{jutila}.
\end{proof}

\subsection{Moments of Dirichlet Polynomials}
After decomposing the Dirichlet polynomial using the Heath-Brown decomposition (Lemma~\ref{lem:heathbrown}), we can have a long polynomial which is the partial sum of a Dirichlet $L$-function (or its derivative). In this case, we will apply the Cauchy-Schwarz inequality to enable us to use the following bound on the twisted fourth moment of such sums:

\begin{lemma}[Twisted Fourth Moment Estimate] \label{lem:fourthmoment}
Let $Q_0\leq T^\varepsilon$, $T^\varepsilon\leq T_0\leq T$, $1\leq M,N\leq T^{1+o(1)}$ and define the Dirichlet polynomials
\begin{align*}
N(s,\chi)&=\sum_{N<n\leq2N}\frac{\chi(n)}{n^s}\text{ or }\sum_{N<n\leq2N}\frac{\chi(n)\log n}{n^s},\\
M(s,\chi)&=\sum_{M<m\leq2M}\frac{a_m\chi(m)}{m^s},
\end{align*}
with $a_m$ any complex numbers. Then we have that
\begin{align}
\sum_{q\leq Q_0}\frac{1}{\varphi(q)}\sum_{\chi(q)}\int_{T_0}^{T} & |N(1+it,\chi)|^4|M(1+it,\chi)|^2dt\nonumber\\
&\ll\left(\frac{Q_0T}{MN^2}(1+M^2(Q_0T)^{-1/2})+\frac{1}{T_0}\right)(Q_0T)^{\varepsilon}\max_{M<m\leq2M}|a_m|^2.
\label{eq:fourthmomentresult}
\end{align}
\end{lemma}

\begin{proof}
This is the Dirichlet character analogue of \cite[Lemma 9]{teravainen} and we follow the same argument.
In the case $1\leq qt\leq N$, we use partial summation and the hybrid result of Fujii, Gallagher and Montgomery \cite{hybridbound}
\[
\sum_{n\leq N}\chi(n)n^{it}=\frac{\delta_\chi \varphi(q)N^{1+it}}{q(1+it)}+O((q\tau)^{1/2}\log(q\tau)),
\]
with $\tau:=|t|+2$ in place of the zeta sum bound to get that
\begin{align*}
\sum_{q\leq Q_0}&\frac{1}{\varphi(q)}\sum_{\chi(q)}\int_{T_0}^{T}|N(1+it,\chi)|^4|M(1+it,\chi)|^2dt\\
&\ll T^{\varepsilon}\max_{M<m\leq2M}|a_m|^2\sum_{q\leq Q_0}\frac{1}{\varphi(q)}\sum_{\chi(q)}\int_{T_0}^T\left(\frac{\varphi(q)}{q(1+|t|)}\right)^4+\frac{\log^4(q\tau)}{(q\tau)^2}dt\\
&\ll \frac{T^{\varepsilon}}{T_0}\max_{M<m\leq2M}|a_m|^2,
\end{align*}
providing the third term of \eqref{eq:fourthmomentresult}.

In the case $N\leq qt\leq Q_0T$, we apply in place of Watt's twisted moment result its Dirichlet character analogue \cite[Theorem 2]{harmanwattwong} to obtain the first two terms of \eqref{eq:fourthmomentresult}.
\end{proof}

\section{Bounding the Mean Value of a Dirichlet Polynomial}
\label{sec:mvdp}

We are now able to prove Proposition \ref{prop:Fmvtbound}, completing the proof of Proposition \ref{prop:B2bound} and consequently Theorem \ref{thm:weightedmainthm}. We will adapt the argument appearing in \cite[Sections 2-4]{teravainen}. We first factorise the Dirichlet polynomial $F(s,\chi)$ before bounding the contribution of the remainder terms, that is, the second term of \eqref{eq:factoredF}.

\begin{lemma}\label{lem:factorisedFLemma}
Let $\varepsilon>0$ be fixed sufficiently small and $T_0=X^{1/100}$. Denote
\[
G_v(s,\chi):=\sum_{e^{\frac{v}{U}}<p\leq e^{\frac{v+1}{U}}}\frac{\chi(p)}{p^s},\quad
H_v(s,\chi):=\sum_{Xe^{-\frac{v}{U}}<p\leq2Xe^{-\frac{v}{U}}}\frac{\chi(p)}{p^s},
\]
then we have the bound
\begin{align*}
B_3(X)\ll \sum_{q\leq Q_0}\Bigg(\frac{qU^2\log^2P}{\varphi(q)}\sum_{\chi(q)}\int_{T_0}^T & |G_{v_0}(1+it,\chi)|^2|H_{v_0}(1+it,\chi)|^2dt\\
&+\frac{1}{Q_0\log^{2+\eta}X}\left(\frac{qT\log X}{X}+\frac{q}{\varphi(q)}\right)\Bigg),
\end{align*}
for some $\eta=\eta(\varepsilon)>0$, where we take $I=[U\log P,(1+\delta)U\log P]$, $U:=Q_0^{1+\varepsilon^2}$ and $v_0\in I$ a suitable integer.
\end{lemma}

\begin{proof}
We factorise $F(s,\chi)$ using Lemma \ref{lem:factoriseF} to get that
\[
F(s,\chi)=\sum_{v\in I\cap\mathbb{Z}}G_v(s,\chi)H_v(s,\chi) +\sum_{\substack{k\in[Xe^{-1/U},Xe^{1/U}]\\\text{or }k\in[2X,2Xe^{1/U}]}}\frac{d_k\chi(k)}{k^s},
\]
where $I=[U\log P,(1+\delta)U\log P]$, $U:=Q_0^{1+\varepsilon^2}$ and
\[
|d_k|\leq\sum_{\substack{k=p_1p_2\\P<p_1\leq P^{1+\delta}}}1.
\]

Therefore, taking the maximum in the sum over $I$, the mean square of the Dirichlet polynomial is bounded by
\begin{align*}
\int_{T_0}^{T}|F(1+it,\chi)|^2dt
&\ll\int_{T_0}^T\left|\sum_{v\in I\cap\mathbb{Z}}G_v(1+it,\chi)H_v(1+it,\chi)\right|^2dt+\int_{T_0}^{T}\Bigg|\sum_{\substack{k\in[Xe^{-1/U},Xe^{1/U}]\\\text{or }k\in[2X,2Xe^{1/U}]}}\frac{d_k\chi(k)}{k^{1+it}}\Bigg|^2dt\\
&\ll|I|^2\int_{T_0}^T|G_{v_0}(1+it,\chi)|^2|H_{v_0}(1+it,\chi)|^2dt+\int_{T_0}^{T}\Bigg|\sum_{\substack{k\in[Xe^{-1/U},Xe^{1/U}]\\\text{or }k\in[2X,2Xe^{1/U}]}}\frac{d_k\chi(k)}{k^{1+it}}\Bigg|^2dt,
\end{align*}
where $v_0\in I$ is the integer maximising the right hand side. Applying Lemma \ref{lem:mvtvariant} to the second integral, we have that
\begin{equation}
\begin{aligned}
\sum_{\chi(q)}\int_{T_0}^{T}|F(1+it,\chi)|^2dt
\ll& U^2\log^2P\sum_{\chi(q)}\int_{T_0}^T|G_{v_0}(1+it,\chi)|^2|H_{v_0}(1+it,\chi)|^2dt\\
&+T\varphi(q)\sum_{\substack{k\in[Xe^{-1/U},Xe^{1/U}]\\\text{or }k\in[2X,2Xe^{1/U}]\\(k,q)=1}}\frac{|d_k|^2}{k^2}
+T\varphi(q)\sum_{\substack{1\leq h\leq\frac{2Xe^{1/U}}{T}\\q\mid h}}\sum_{\substack{m-n=h\\m,n\in[Xe^{-1/U},Xe^{1/U}]\\\text{or }m,n\in[2X,2Xe^{1/U}]\\(mn,q)=1}}\frac{|d_m||d_n|}{mn}.
\label{eq:factorisedintegral}
\end{aligned}
\end{equation}
We now bound the last two terms. We consider only the sums where $k\in[Xe^{-1/U},Xe^{1/U}]$, with the sums over $k\in[2X,2Xe^{1/U}]$ being treated analogously. For the first sum, 
\begin{equation}\label{eq:brun1}
\sum_{\substack{k=p_1p_2\\Xe^{-1/U}\leq k\leq Xe^{1/U}\\P<p_1\leq P^{1+\delta}}}\frac{1}{k^2}
\ll\frac{e^{2/U}}{X^2}\sum_{P<p_1\leq P^{1+\delta}}\sum_{\frac{Xe^{-1/U}}{p_1}\leq p_2\leq\frac{Xe^{1/U}}{p_1}}1.
\end{equation}
By the Brun-Titchmarsh inequality, we have the bound
\[
\sum_{\frac{Xe^{-1/U}}{p_1}\leq p_2\leq\frac{Xe^{1/U}}{p_1}}1
\ll\frac{X(e^{1/U}-e^{-1/U})}{p_1\log X}.
\]
Returning to \eqref{eq:brun1}, by Mertens' theorem we have that
\[
\sum_{\substack{Xe^{-1/U}\leq p_1p_2\leq Xe^{1/U}\\P<p_1\leq P^{1+\delta}}}\frac{1}{(p_1p_2)^2}
\ll\frac{e^{3/U}-e^{1/U}}{X\log X}\sum_{P<p\leq P^{1+\delta}}\frac{1}{p}
\ll\frac{1}{XU\log X}.
\]

We will use Brun's sieve to bound the second of these sums. We may trivially bound
\[
\begin{aligned}
&|\{n\leq2X:n=p_1p_2, p_1\in(P,P^{1+\delta}]\}|\\
&\ll|\{n\leq2X:n=p_1m, p_1\in(P,P^{1+\delta}], (m,P(z))=1\}|
\end{aligned}
\]
where we define $P(z)=\prod_{p<z}p$ with $z=X^{1/\beta}$ and $\beta>1$ suitably large. Let $\Pi$ be the product of all primes in $\tilde{I}:=(P,P^{1+\delta}]\cap[1,z)$ and $P'(z)=\prod_{p<z, p\nmid h}p$. Therefore, we have that
\begin{align*}
\sum_{\substack{1\leq h\leq\frac{2Xe^{1/U}}{T} \\ q\mid h}}&\sum_{Xe^{-1/U}\leq m\leq Xe^{1/U}}\frac{|d_m||d_{m+h}|}{m(m+h)}\\
&\ll\frac{e^{2/U}}{X^2}\sum_{\substack{1\leq h\leq\frac{2Xe^{1/U}}{T} \\ q\mid h}}\left|\left\{m\in[Xe^{-1/U},Xe^{1/U}]:\left(m(m+h),\frac{P'(z)}{\Pi}\right)=1\right\}\right|.
\end{align*}
Brun's sieve then gives the bound
\begin{align*}
\left|\left\{m \in[Xe^{-1/U},Xe^{1/U}]:\left(m(m+h),\frac{P'(z)}{\Pi}\right)=1\right\}\right|
&\ll \frac{hX(e^{1/U}-e^{-1/U})}{\varphi(h)}\prod_{\substack{p<z\\p\notin\tilde{I}}}\left(1-\frac{2}{p}\right)\\
&\ll\frac{X}{U\log^2z}\frac{h}{\varphi(h)}.
\end{align*}
Therefore we have
\[
\begin{aligned}
\sum_{\substack{1\leq h\leq\frac{2Xe^{1/U}}{T} \\ q\mid h}}\sum_{Xe^{-1/U}\leq m\leq Xe^{1/U}}\frac{|d_m||d_{m+h}|}{m(m+h)}
&\ll\frac{e^{2/U}}{XU\log^2z}\sum_{\substack{1\leq h\leq\frac{2Xe^{1/U}}{T} \\ q\mid h}}\frac{h}{\varphi(h)}\\
&\ll\frac{1}{\varphi(q)TU\log^2X}.
\end{aligned}
\]
Combining these estimates and applying the definition of $U$, these two sums contribute 
\[
\ll\left(\frac{\varphi(q)T\log X}{X}+1\right)\frac{1}{Q_0\log^{2+\eta}X}
\]
to \eqref{eq:factorisedintegral} for some $\eta=\eta(\varepsilon)>0$, as needed.
\end{proof}

It remains to estimate the integral appearing in Lemma \ref{lem:factorisedFLemma}. We split the domain of integration $[T_0,T]$ according to the size of the polynomial $G_{v_0}$. We will first bound the contribution of $\mathcal{S}_1\subset[T_0,T]\times\{\chi\mod q\}$ defined by
\begin{equation}\label{eq:S1defn}
\mathcal{S}_1:=\{(t,\chi)\in[T_0,T]\times\{\chi\mod q\}:|G_{v_0}(1+it,\chi)|\leq e^{-\frac{\alpha_1v_0}{U}}\},
\end{equation}
where $\alpha_1:=\frac{3}{34}-\varepsilon'$ and $\varepsilon'>0$ is sufficiently small in terms of $\varepsilon>0$. We may write
\begin{equation}\label{eq:T1defn}
\mathcal{S}_1=\bigcup_{\chi\mod q}\{\chi\}\times\mathcal{T}_{1,\chi}
\end{equation}
for some $\mathcal{T}_{1,\chi}\subset[T_0,T]$.

\subsection{The contribution of \texorpdfstring{$\mathcal{S}_1$}{S1}}

We first treat the contribution of the integral over $\mathcal{T}_{1,\chi}$, where the polynomial $G_{v_0}(1+it,\chi)$ is pointwise small.

\begin{lemma}\label{lem:T1integral}
Let $\varepsilon>0$ be fixed sufficiently small and $\mathcal{T}_{1,\chi}$ be defined as in \eqref{eq:T1defn}. Then, there exists some $\eta=\eta(\varepsilon)>0$ such that
\[
U^2\log^2P\sum_{q\leq Q_0}\frac{q}{\varphi(q)}\sum_{\chi(q)}\int_{\mathcal{T}_{1,\chi}}|G_{v_0}(1+it)|^2|H_{v_0}(1+it)|^2dt
\ll\frac{1}{Q_0\log^{2+\eta}X}\sum_{q\leq Q_0} \left(\frac{qPT\log X}{X}+1\right).
\]
\end{lemma}

\begin{proof}
First we apply the definition of $\mathcal{T}_{1,\chi}$, bounding pointwise $|G_{v_0}(1+it,\chi)|\leq e^{-\frac{\alpha_1v_0}{U}}\leq P^{-\alpha_1}$ to bound the integral over $\mathcal{T}_{1,\chi}$ by
\[
\int_{\mathcal{T}_{1,\chi}}|G_{v_0}(1+it,\chi)|^2|H_{v_0}(1+it,\chi)|^2dt
\ll P^{-2\alpha_1}\int_{\mathcal{T}_{1,\chi}}|H_{v_0}(1+it,\chi)|^2dt.
\]
Applying Lemma \ref{lem:mvtvariant}, we have that
\begin{align*}
\sum_{\chi(q)}&\int_{\mathcal{T}_{1,\chi}}|G_{v_0}(1+it,\chi)|^2|H_{v_0}(1+it,\chi)|^2dt\\
&\ll \frac{P^{-2\alpha_1}T\varphi(q)e^{2v_0/U}}{X^2}\Bigg(\sum_{\frac{X}{e^{v_0/U}}<p\leq\frac{2X}{e^{v_0/U}}}1+\sum_{\substack{1\leq h\leq\frac{X}{Te^{v_0/U}}\\q\mid h}}\sum_{\substack{\frac{X}{e^{v_0/U}}<p_1,p_2\leq\frac{2X}{e^{v_0/U}}\\p_1-p_2=h}}1\Bigg).
\end{align*}
By Chebyshev's inequality, we have that
\[
\sum_{\frac{X}{e^{v_0/U}}<p\leq\frac{2X}{e^{v_0/U}}}1
\ll\frac{X}{e^{v_0/U}\log X}.
\]
For the second term, we have by Lemma \ref{lem:primepairs} that
\[
\sum_{\substack{1\leq h\leq\frac{X}{Te^{v_0/U}}\\q\mid h}}\sum_{\substack{\frac{X}{e^{v_0/U}}<p_1,p_2\leq\frac{2X}{e^{v_0/U}}\\p_1-p_2=h}}1
\ll\frac{X}{e^{v_0/U}\log^2X}\sum_{\substack{1\leq h\leq\frac{X}{Te^{v_0/U}} \\ q\mid h}}\mathfrak{S}(h)
\ll\frac{X^2}{e^{2v_0/U}qT\log^2X}.
\]
Combining these estimates, we have that
\[
\sum_{\chi(q)}\int_{\mathcal{T}_{1,\chi}}|G_{v_0}(1+it,\chi)|^2|H_{v_0}(1+it,\chi)|^2dt
\ll \frac{\varphi(q)P^{\delta-2\alpha_1}}{q\log^2X}\left(\frac{qPT\log X}{X}+1\right).
\]
Thus the overall contribution to the sum $B_3(X)$ is
\[
\frac{U^2P^{\delta-2\alpha_1}\log^2P}{\log^2X}\sum_{q\leq Q_0}\left(\frac{qPT\log X}{X}+1\right).
\]
Now, by our choices of $P$, $U$ and the definition of $\alpha_1$ with $\varepsilon'$ sufficiently small in terms of $\varepsilon$, we have that
\[
\frac{U^2P^{\delta-2\alpha_1}\log^2P}{\log^2X}
\ll\frac{1}{Q_0\log^{2+\eta}X},
\]
for some $\eta=\eta(\varepsilon)>0$ as $\delta>0$ is sufficiently small.
\end{proof}

\subsection{The contribution of the complement of \texorpdfstring{$\mathcal{S}_1$}{S1}}

It remains to consider the contribution of the complement of $\mathcal{S}_1$. We apply Lemma \ref{lem:heathbrown} to $H_{v_0}(1+it,\chi)$ with $k=3$, decomposing this polynomial into 
\[
|H_{v_0}(1+it,\chi)|
\ll(\log^C X)\left(|Q_1(1+it,\chi)|+\cdots+|Q_L(1+it,\chi)|\right),
\]
where $L\leq \log^CX$ for some $C>0$. Each $Q_j(s,\chi)$ is of the form $Q_j(s,\chi)=\prod_{i\leq J_j}M_i(s,\chi)$ with $J_j\leq6$ for each $1\leq j\leq L$, where $M_i(s,\chi)$ are prime-factored Dirichlet polynomials of the form
\[
\sum_{M_i<n\leq2M_i}\frac{\chi(n)\log n}{n^s},\;\sum_{M_i<n\leq2M_i}\frac{\chi(n)}{n^s},\text{ or }\sum_{M_i<n\leq2M_i}\frac{\mu(n)\chi(n)}{n^s},
\]
whose lengths satisfy $M_1\cdots M_{J_j}=X^{1+o(1)}$, $M_i\gg\exp\left(\frac{\log X}{\log\log X}\right)$ for each $i$. We will treat the polynomials $Q_j(s,\chi)$ according to the lengths $M_i$ of the factors as follows:

\textbf{Type II Sums:}
Suppose we have $Q_j(s,\chi)=\prod_{i\leq J_j}M_i(s,\chi)$ for some $1\leq j\leq L$ with $M_i\leq X^{1/3+\varepsilon'}$ for some $i\leq J_j\leq6$. Then, we rewrite $Q_j(s,\chi)=M_1(s,\chi)M_2(s,\chi)$ with $\exp\left(\frac{\log X}{\log\log X}\right)\ll M_1\leq X^{1/3+\varepsilon'}$ and $M_2=X^{1+o(1)}/M_1$. Note we may write $M_1=X^{\nu}$ for some $0<\nu\leq1/3+\varepsilon'$. Where the coefficient $\log n$ appears, we apply partial summation. The polynomial $M_2(s,\chi)$ is a product of polynomials, and the coefficients are given by convolving coefficients which are one of the sequences $(\mu(n)), (1)$. Thus the coefficients of the polynomial $M_2(s,\chi)$ are bounded in absolute value by $\ll d_r(n)$ with $r\leq5$.

\textbf{Type I Sums:} 
Otherwise, we may write $Q_j(s,\chi)=N_1(s,\chi)N_2(s,\chi)$, where each $N_i(s,\chi)$ is of the form 
\[
\sum_{N_i<n\leq2N_i}\frac{\chi(n)\log n}{n^s},\text{ or }\sum_{N_i<n\leq2N_i}\frac{\chi(n)}{n^s},
\]
with lengths satisfying $N_1N_2=X^{1+o(1)}$. Note that if in fact only one of the lengths $N_i$ satisfies $N_i>X^{1/3+\varepsilon'}$, then one of $N_1(s,\chi), N_2(s,\chi)$ can be the constant polynomial $1^{-s}$. Since we have that $N_1N_2=X^{1+o(1)}$, without loss of generality we may take that $N_1>X^{1/2-\varepsilon'}$, so that $X^{1/3+\varepsilon'}< N_2\leq X^{1/2+\varepsilon'}$.

\subsubsection{Type II Sums}

To treat the contribution of these sums, we split the complement of $\mathcal{S}_1$ according to the size of $M_1(1+it,\chi)$:
\begin{equation}\label{eq:SS2defn}
\begin{aligned}
\mathcal{S}_2&:=\{(t,\chi)\in[T_0,T]\times\{\chi\mod q\}:|M_1(1+it,\chi)|\leq M_1^{-\alpha_2}\}\setminus \mathcal{S}_1,\\
\mathcal{S}&:=([T_0,T]\times\{\chi\mod q\})\setminus (\mathcal{S}_1\cup\mathcal{S}_2),
\end{aligned}
\end{equation}
with $\alpha_2:=\frac{2}{17}-\varepsilon'>\alpha_1$. As before, we may write
\begin{equation}\label{eq:TT2defn}
\begin{aligned}
\mathcal{S}_2&=\bigcup_{\chi\mod q}\{\chi\}\times\mathcal{T}_{2,\chi},\\
\mathcal{S}&=\bigcup_{\chi\mod q}\{\chi\}\times\mathcal{T}_\chi,
\end{aligned}
\end{equation}
for some $\mathcal{T}_{2,\chi},\mathcal{T}_\chi\subset[T_0,T]$. We first consider the contribution of the integral over $\mathcal{T}_{2,\chi}$.

\begin{lemma}\label{lem:T2integral}
Let $\varepsilon>0$ be fixed sufficiently small. Let $\mathcal{T}_{2,\chi}$ be defined as in \eqref{eq:TT2defn} and $M_1(s,\chi),M_2(s,\chi)$ be the prime-factored polynomials defined previously. Then
\[
\sum_{q\leq Q_0}\frac{q}{\varphi(q)}\sum_{\chi(q)}\int_{\mathcal{T}_{2,\chi}}|G_{v_0}(1+it,\chi)|^2|M_1(1+it,\chi)M_2(1+it,\chi)|^2dt
\ll X^{-\varepsilon/1000}.
\]
\end{lemma}

\begin{proof}
By definition of $\mathcal{T}_{2,\chi}$, we have that $(|G_{v_0}(1+it,\chi)|P^{\alpha_1})^{2(k-1)}\geq1$, where $k=\lceil\log M_1/\log P\rceil$. Therefore, we have 
\begin{align}
\sum_{\chi(q)} & \int_{\mathcal{T}_{2,\chi}}|G_{v_0}(1+it,\chi)|^2|M_1(1+it,\chi)M_2(1+it,\chi)|^2dt\nonumber\\
&\ll  M_1^{-2\alpha_2}P^{2\alpha_1(k-1)}\sum_{\chi(q)}\int_{\mathcal{T}_{2,\chi}}|G_{v_0}^{k}(1+it,\chi)M_2(1+it,\chi)|^2dt.
\label{eq:T2contribution}
\end{align}
By the choice of $k$, we have that
\[
P^{2\alpha_1k}
\ll \exp\left(\frac{2\alpha_1\log P \log M_1}{\log P}\right)\\
= M_1^{2\alpha_1}.
\]
Therefore \eqref{eq:T2contribution} is bounded by
\[
\begin{aligned}
&\ll M_1^{2\alpha_1-2\alpha_2}P^{-2\alpha_1}\sum_{\chi(q)}\int_{\mathcal{T}_{2,\chi}}|G_{v_0}^{k}(1+it,\chi)M_2(1+it,\chi)|^2dt\\
&\ll M_1^{2\alpha_1-2\alpha_2}P^{-2\alpha_1}\sum_{\chi(q)}\int_{\mathcal{T}_{2,\chi}}|A(1+it,\chi)|^2dt,
\end{aligned}
\]
where we define 
\[
A(s,\chi):=\sum_{n\in J}\frac{A_n\chi(n)}{n^s},
\]
with $J:=(M_2e^{kv_0/U},2M_2e^{k(v_0+1)/U}]$ and the coefficients $A_n$ satisfying
\[
|A_n|\leq\sum_{\substack{n=p_1\cdots p_km\\e^{v_0/U}<p_i\leq e^{(v_0+1)/U}\\i=1,\ldots,k\\M_2<m\leq2M_2}}d_r(m),
\]
where $r\leq5$, as before. Note that the primes $p_1,\ldots,p_k$ are not necessarily distinct and $m$ may also have prime factors in the range $(e^{v_0/U},e^{(v_0+1)/U}]$. Applying Lemma \ref{lem:mvt} to the integral, we have that
\[
\sum_{\chi(q)}\int_{\mathcal{T}_{2,\chi}}|A(1+it,\chi)|^2dt
\ll\left(\varphi(q)T+\frac{\varphi(q)}{q}M_2e^{kv_0/U}(2e^{k/U}-1)\right)\sum_{\substack{n\in J\\(n,q)=1}}\frac{|A_n|^2}{n^2}.
\]
Following \cite[Lemma 13]{mrmult}, for the coefficients $A_n$ of $A(s,\chi)$, we have the bound
\[
|A_n|
\ll M_2^{o(1)}\sum_{\substack{n=p_1\cdots p_km\\e^{v_0/U}<p_i\leq e^{(v_0+1)/U}\\i=1,\ldots,k\\M_2<m\leq2M_2}}1.
\]
The number of ways we can write $d=p_1\cdots p_k$ with $p_i$ not necessarily distinct is at most $k!$. Then we have the bound 
\[
|A_n|
\ll M_2^{o(1)}k!\sum_{\substack{n=md\\p\mid d\Rightarrow e^{v_0/U}<p\leq e^{(v_0+1)/U}}}1,
\]
trivially extending the range of summation for $m$. We write $g(n)$ for the latter sum, which is multiplicative and satisfies
\[
g(p^a)=
\begin{cases}
a+1,&\text{ if }e^{v_0/U}<p\leq e^{(v_0+1)/U},\\
0,&\text{ otherwise.}
\end{cases}
\]
Therefore, we clearly have $g(n)\ll d(n)\ll n^{o(1)}$ and thus
\begin{align*}
\sum_{\substack{n\in J\\(n,q)=1}}\frac{|A_n|^2}{n^2}
&\ll\frac{X^{o(1)}k!}{e^{kv_0/U}M_2}\sum_{\substack{n\in J\\(n,q)=1}}\frac{|A_n|}{n}\\
&\ll\frac{X^{o(1)}k!}{e^{kv_0/U}M_2}\sum_{M_2<m\leq2M_2}\frac{d_r(m)}{m}\left(\sum_{e^{v_0/U}<p_i\leq e^{(v_0+1)/U}}\frac{1}{p}\right)^{k}\\
&\ll\frac{X^{o(1)}k!}{e^{kv_0/U}M_2}\frac{k}{v_0},
\end{align*}
noting that $(\log M_2)^{r-1}\ll(\log X)^4$ is negligible. By the definition of $k$ and since $v_0\in I$, we have that 
\[
\frac{k}{v_0}\ll\frac{\log M_1}{U\log^2P}\ll 1.
\]
By the definition of $k$, we also have that
\[
k!
\ll(\log M_1)^{\frac{\log M_1}{\log P}}
\ll\exp\left(\frac{\log\log M_1 \log M_1}{(17+\varepsilon)\log\log X}\right)\\
\ll M_1^{\frac{1}{17+\varepsilon}}.
\]
Therefore, we can bound the integral over $\mathcal{T}_{2,\chi}$ by
\[
\sum_{\chi(q)}\int_{\mathcal{T}_{2,\chi}}|A(1+it,\chi)|^2dt
\ll\left(\varphi(q)\frac{T}{e^{kv_0/U}M_2}+\frac{\varphi(q)}{q}(2e^{k/U}-1)\right)X^{o(1)}M_1^{\frac{1}{17+\varepsilon}}.
\]
Since $v_0\in I$, we have that $e^{kv_0/U}\gg P^{k}\gg M_1$ by the definition of $k$. We also have that $2e^{k/U}-1\ll1$ and therefore we can bound the above integral by
\[
\sum_{\chi(q)}\int_{\mathcal{T}_{2,\chi}}|A(1+it,\chi)|^2dt
\ll\left(\varphi(q)\frac{T}{M_1M_2}+\frac{\varphi(q)}{q}\right) X^{o(1)}M_1^{\frac{1}{17+\varepsilon}}
\ll \varphi(q)X^{o(1)}M_1^{\frac{1}{17+\varepsilon}},
\]
as we have $M_1M_2=X^{1+o(1)}$ and $T\leq X^{1+o(1)}$. Returning to \eqref{eq:T2contribution}, we have the bound
\[
\sum_{\chi(q)}\int_{\mathcal{T}_{2,\chi}}|G_{v_0}(1+it,\chi)|^2|M_1(1+it,\chi)M_2(1+it,\chi)|^2dt
\ll \varphi(q)X^{o(1)}M_1^{\frac{1}{17+\varepsilon}+2(\alpha_1-\alpha_2)}P^{-2\alpha_1}.
\]
With our choices of $\alpha_1,\alpha_2$, we have that $2\alpha_1-2\alpha_2=-1/17$. Summing over $q$ introduces a factor of $Q_0^2$. Recalling that we may write $M_1=X^{\nu}$, for some $0<\nu\leq1/3+\varepsilon'$, we find that
\[
\sum_{q\leq Q_0}\frac{q}{\varphi(q)}\sum_{\chi(q)}\int_{\mathcal{T}_{2,\chi}}|G_{v_0}(1+it,\chi)|^2|M_1(1+it,\chi)M_2(1+it,\chi)|^2dt\ll Q_0^2P^{-2\alpha_1}X^{-\frac{\varepsilon\nu}{300}+o(1)},
\]
and choosing $\varepsilon'>0$ sufficiently small in terms of $\varepsilon>0$ ensures the above is bounded by $X^{-\varepsilon/1000}$, as needed.
\end{proof}

We now treat the contribution of the integral over $\mathcal{T}_\chi$, applying the Hal{\'a}sz-Montgomery inequality and the large value theorems.

\begin{lemma}\label{lem:Tintegral}
Let $\mathcal{T_\chi}$ be defined as in \eqref{eq:TT2defn}. Let $E>0$ be fixed sufficiently large. Then, we have that
\[
\sum_{q\leq Q_0}\frac{q}{\varphi(q)}\sum_{\chi(q)}\int_{\mathcal{T}_\chi}|G_{v_0}(1+it,\chi)M_1(1+it,\chi)M_2(1+it,\chi)|^2dt
\ll\frac{1}{\log^EX}.
\]
\end{lemma}

\begin{proof}
Let $M_1=X^{\nu}$ for some $0<\nu\leq1/3+\varepsilon'$. We first replace the integral over $\mathcal{T}_\chi$ with a sum over a well-spaced set. For each character $\chi$ mod $q$, cover $\mathcal{T}_\chi$ with intervals of unit length and from each interval take the point which maximises the integral over that interval. This set is not yet necessarily well-spaced, but we can split it into $O(1)$ well-spaced subsets. Therefore we may write
\begin{align*}
\sum_{\chi(q)}&\int_{\mathcal{T}_\chi}|G_{v_0}(1+it,\chi)M_1(1+it,\chi)M_2(1+it,\chi)|^2dt\\
&\ll\sum_{(t,\chi)\in\mathcal{T}'}|G_{v_0}(1+it,\chi)M_1(1+it,\chi)M_2(1+it,\chi)|^2,
\end{align*}
where $\mathcal{T}'$ is the well-spaced subset which maximises the right hand side. We now apply the prime-factored property ${|M_1(1+it,\chi)|^2\ll\log^{-F'}X}$ with $F'>0$ sufficiently large and then Lemma \ref{lem:HalaszMont} to get that 

\begin{align*}
&\sum_{(t,\chi)\in\mathcal{T}'}|G_{v_0}(1+it,\chi)M_1(1+it,\chi)M_2(1+it,\chi)|^2\\
&\ll\log^{-F'}X\sum_{(t,\chi)\in\mathcal{T}'}|G_{v_0}(1+it,\chi)M_2(1+it,\chi)|^2\\
&\ll\log^{-F'}X\left(\frac{\varphi(q)}{q}X^{1-\nu+\varepsilon'}+|\mathcal{T}'|(qT)^{1/2}\right)\sum_{e^{v_0/U}<p\leq e^{(v_0+1)/U}}\frac{1}{p^2}\sum_{M_2<m\leq2M_2}\frac{d_r^2(m)}{m^2}\\
&\ll\log^{-F}X\left(\frac{\varphi(q)}{q}+\frac{|\mathcal{T}'|(qT)^{1/2}}{X^{1-\nu+\varepsilon'}}\right),
\end{align*}
where $F>0$ is suitably large and $r\leq5$. If we can show that $|\mathcal{T}'|\ll X^{1/2-\nu-\varepsilon^2}$, then we will have that
\[
\sum_{\chi(q)}\int_{\mathcal{T}_\chi}|G_{v_0}(1+it,\chi)M_1(1+it,\chi)M_2(1+it,\chi)|^2dt
\ll\log^{-E'}X
\]
for some suitable $E'>0$. Summing over $q$, we have that
\[
\sum_{q\leq Q_0}\frac{q}{\varphi(q)}\sum_{\chi(q)}\int_{\mathcal{T}_\chi}|G_{v_0}(1+it,\chi)M_1(1+it,\chi)M_2(1+it,\chi)|^2dt
\ll\frac{1}{\log^EX},
\]
where $E>0$ is sufficiently large.

Thus, it remains to prove that $|\mathcal{T}'|\ll X^{1/2-\nu-\varepsilon^2}$.  Applying Lemma \ref{lem:Jutilalarge} with $M_1(1+it,\chi)^l, V=M_1^{-\alpha_2l},k=2,$ and $l\in\{2,3\}$, we have that
\begin{align*}
|\mathcal{T}'|&\ll
X^{\varepsilon^3}\left(M_1^{2\alpha_2l}+X^2M_1^{4l(2\alpha_2-1)}+XM_1^{4l(4\alpha_2-1)}\right)\\
&\ll
\begin{cases}
X^{\max(\frac{8}{17}\nu,2-\frac{104}{17}\nu,1-\frac{72}{17}\nu)-2\varepsilon^2},&\;(l=2),\\
X^{\max(\frac{12}{17}\nu,2-\frac{156}{17}\nu,1-\frac{108}{17}\nu)-2\varepsilon^2},&\;(l=3).
\end{cases}
\end{align*}
We have that $\nu\leq\frac{1}{3}+\varepsilon'$. The inequality $\frac{8}{17}\nu\leq\frac{1}{2}-\nu$ holds when $\nu\leq\frac{17}{50}$ and we have that $2-\frac{104}{17}\nu\geq\frac{8}{17}\nu$ when $\nu\leq\frac{17}{56}$. Note that $2-\frac{104}{17}\nu\leq\frac{1}{2}-\nu$ fails if $\nu<\frac{17}{58}$, so the inequality with $l=2$ provides the bound $|\mathcal{T}'|\ll X^{1/2-\nu-\varepsilon^2}$ in the range $\frac{17}{58}\leq\nu\leq\frac{1}{3}$. 

Similarly, $\frac{12}{17}\nu\leq\frac{1}{2}-\nu$ holds for $\nu\leq\frac{17}{58}$ and $2-\frac{156}{17}\nu\geq\frac{12}{17}\nu$ when $\nu\leq\frac{17}{84}$. We have that $2-\frac{156}{17}\nu\leq1/2-\nu$ fails when $\nu<\frac{51}{278}$, so the inequality with $l=3$ gives the required bound for $|\mathcal{T}'|$ when $\frac{51}{278}\leq\nu\leq\frac{17}{58}$.

For the remaining range $\nu<\frac{51}{278}$, we apply Lemma \ref{lem:largevalues} with $V=M_1^{-\alpha_2}$ to get that
\[
|\mathcal{T}'|
\ll (qT)^{2\alpha_2}X^{2\nu\alpha_2+\varepsilon}
\ll X^{\frac{4}{17}(1+\nu)+100\varepsilon}
\ll X^{1/2-\nu-\varepsilon^2},
\]
as required.
\end{proof}

\subsubsection{Type I Sums}
It remains to treat the contribution of the sums of form \eqref{eq:dirichletlsum}, applying the Cauchy-Schwarz inequality and a result on the twisted fourth moment of partial sums of Dirichlet $L$-functions.

\begin{lemma}\label{lem:dirichletcontribution}
Let $\varepsilon>0$ be fixed sufficiently small. With $N_1(s,\chi),N_2(s,\chi)$ as defined above, we have that
\[
\sum_{q\leq Q_0}\frac{q}{\varphi(q)}\sum_{\chi(q)}\int_{[T_0,T]\setminus\mathcal{T}_{1,\chi}}|G_{v_0}(1+it,\chi)|^2|N_1(1+it,\chi)N_2(1+it,\chi)|^2dt
\ll X^{-\varepsilon/2}.
\]
\end{lemma}

\begin{proof}
We split the domain of integration into dyadic intervals $[T_1,2T_1]$ such that $T_0\leq T_1\leq T$. As we are in the complement of $\mathcal{S}_1$, we have that $|G_{v_0}(1+it,\chi)P^{\alpha_1}|^{2(l-1)}\geq 1$, where we define $l=\lfloor\varepsilon\log X/\log P\rfloor$. Therefore, we have that 
\begin{align*}
\sum_{q\leq Q_0}&\frac{q}{\varphi(q)}\sum_{\chi(q)}\int_{([T_0,T]\setminus\mathcal{T}_{1,\chi})\cap[T_1,2T_1]}|G_{v_0}(1+it,\chi)|^2|N_1(1+it,\chi)N_2(1+it,\chi)|^2dt\\
\ll& P^{2\alpha_1(l-1)}\sum_{q\leq Q_0}\frac{q}{\varphi(q)}\sum_{\chi(q)}\int_{T_1}^{2T_1}|G_{v_0}(1+it,\chi)|^{2l}|N_1(1+it,\chi)N_2(1+it,\chi)|^2dt.
\end{align*}
Applying the Cauchy-Schwarz inequality three times (to the integral and the sums over $\chi$ and $q$), we have that the sum over $q$ above is bounded by
\begin{equation}\label{eq:dyadicintegral}
\ll \left(\sum_{q\leq Q_0}\frac{1}{\varphi(q)}\sum_{\chi(q)}\int_{T_1}^{2T_1}|G_{v_0}(1+it,\chi)|^{4l}|N_1(1+it,\chi)|^4dt\right)^{\frac{1}{2}}
\left(\sum_{q\leq Q_0}\frac{q^2}{\varphi(q)}\sum_{\chi(q)}\int_{T_1}^{2T_1}|N_2(1+it,\chi)|^4dt\right)^{\frac{1}{2}}.
\end{equation}
We apply Lemma \ref{lem:mvt} to the second integral. Noting that $N_2(1+it,\chi)$ either has coefficients $1$ or $\log n$, we find that
\[
\begin{aligned}
\sum_{q\leq Q_0}\frac{q^2}{\varphi(q)}\sum_{\chi(q)}\int_{T_1}^{2T_1}|N_2(1+it,\chi)|^4dt
&\ll X^{\varepsilon/10}\sum_{q\leq Q_0}q^2\left(\frac{T_1+N_2^2/q}{N_2^2}\right)\\
&\ll X^{\varepsilon/10}\left(\frac{T_1+N_2^2}{N_2^2}\right).
\end{aligned}
\]
We next treat the first term appearing in \eqref{eq:dyadicintegral}. We have that
\[
|G_{v_0}(1+it,\chi)|^{4l}
=\left|\sum_{e^{v_0/U}<p\leq e^{(v_0+1)/U}}\frac{\chi(p)}{p^{1+it}}\right|^{4l}
=\left|\sum_{e^{2lv_0/U}<n\leq e^{2l(v_0+1)/U}}\frac{\chi(n)a(n)}{n^{1+it}}\right|^2,
\]
where $a(n)=0$ unless $n$ is a product of $2l$ primes, not necessarily distinct, each lying in the interval $(e^{v_0/U},e^{(v_0+1)/U}]$. Writing $n$ in terms of its prime factorisation $n=p_1^{a_1}\cdots p_{b}^{a_{b}}$ with $b\leq 2l$, we have that $a(n)=\binom{2l}{a_1,\ldots,a_{b}}$ when it is non-zero and therefore that $a(n)\ll(2l)!$. Now we can apply Lemma \ref{lem:fourthmoment} with $M(1+it,\chi)=G_{v_0}^{2l}(1+it,\chi)$, $M=P^{2l}$ and $N$ corresponding to $N_1$ to get that
\begin{align*}
\sum_{q\leq Q_0}\frac{1}{\varphi(q)}\sum_{\chi(q)}\int_{T_1}^{2T_1}|G_{v_0}(1+it,\chi)|^{4l}|N_1(1+it,\chi)|^4dt
&\ll X^{\varepsilon/10}(2l)!^2\left(\frac{Q_0T_1}{N_1^2P^{2l}}\left(1+P^{4l}(Q_0T_1)^{-1/2}\right)+\frac{1}{T_1}\right)\\
&\ll X^{\varepsilon/10}(l!)^{4+\varepsilon}\left(\frac{Q_0T_1}{N_1^2P^{2l}}+\frac{1}{T_1}\right),
\end{align*}
as the definition of $l$ ensures that $P^{4l}(Q_0T_1)^{-1/2}\ll1$. Returning to \eqref{eq:dyadicintegral}, we have that
\begin{align*}
\sum_{q\leq Q_0}\frac{q}{\varphi(q)}&\sum_{\chi(q)}\int_{(\mathcal{T}\setminus\mathcal{T}_1)\cap[T_1,2T_1]}|G_{v_0}(1+it,\chi)|^2|N_1(1+it,\chi)N_2(1+it,\chi)|^2dt\\
&\ll P^{2\alpha_1(l-1)}X^{\varepsilon/10}(l!)^{2+\varepsilon}\left(\frac{Q_0T_1}{N_1^2P^{2l}}+\frac{1}{T_1}\right)^{1/2}\left(\frac{T_1+N_2^2}{N_2^2}\right)^{1/2}\\
&\ll P^{2\alpha_1(l-1)}X^{\varepsilon/10}(l!)^{2+\varepsilon}\left(\frac{Q_0T_1}{N_1^2N_2^2P^{2l}}(T_1+N_2^2)+\frac{1}{N_2^2}+\frac{1}{T_1}\right)^{1/2}.
\end{align*}
We have that $N_1N_2=X^{1+o(1)}$ with $N_1\geq X^{1/2-\varepsilon'}$ and $X^{1/3+\varepsilon'}\leq N_2\leq X^{1/2+\varepsilon'}$. As we also have that $X^{1/100}=T_0\leq T_1\leq T\leq X^{1+o(1)}$, the above is bounded by
\[
\ll P^{2\alpha_1(l-1)}X^{\varepsilon/10}(l!)^{2+\varepsilon}\left(\frac{1}{P^l}+\frac{1}{N_2}+\frac{1}{T_0^{1/2}}\right).
\]
Summing the contribution of each of the integrals over the dyadic intervals multiplies the above estimate by $\log X$. By the definition of $l$, we have that $(l!)^{2+\varepsilon}\ll(\log^2X)^{l(1+\varepsilon)}$, and so we also have that
\begin{align*}
 (P^{2\alpha_1-1}\log^2 X)^{l(1+\varepsilon)}
&\ll\exp\left((1+\varepsilon)\varepsilon\left(2\alpha_1-1+\frac{2}{17+\varepsilon}\right)\log X \right)\\
&\ll X^{-2\varepsilon/3}.
\end{align*}
Overall we have the bound
\[
\sum_{q\leq Q_0}\frac{q}{\varphi(q)}\sum_{\chi(q)}\int_{[T_0,T]\setminus\mathcal{T}_{1,\chi}}|G_{v_0}(1+it,\chi)|^2|N_1(1+it,\chi)N_2(1+it,\chi)|^2dt
\ll X^{-\varepsilon/2},
\]
as needed.
\end{proof}

\subsection{Completing the proof of Proposition \texorpdfstring{\ref{prop:Fmvtbound}}{6.5}}
We may now combine these estimates to complete the proof of Proposition \ref{prop:Fmvtbound}.

\begin{proof}[Proof of Proposition \ref{prop:Fmvtbound}.]
By Lemma \ref{lem:factorisedFLemma}, we have that
\begin{align*}
\sum_{q\leq Q_0}\frac{q}{\varphi(q)}\sum_{\chi(q)}\int_{T_0}^T |F(1+it,\chi)|^2dt
\ll \sum_{q\leq Q_0}\Bigg(&\frac{qU^2\log^2P}{\varphi(q)}\sum_{\chi(q)}\int_{T_0}^T|G_{v_0}(1+it,\chi)|^2|H_{v_0}(1+it,\chi)|^2dt\\
&+\frac{1}{Q_0\log^{2+\eta}X}\left(\frac{qT\log X}{X}+\frac{q}{\varphi(q)}\right)\Bigg)\\
\ll \sum_{q\leq Q_0}\Bigg(&\frac{q U^2\log^2P}{\varphi(q)}\sum_{\chi(q)}\Bigg(\int_{\mathcal{T}_{1,\chi}}|G_{v_0}(1+it,\chi)|^2|H_{v_0}(1+it,\chi)|^2dt\\
&+\int_{[T_0,T]\setminus\mathcal{T}_{1,\chi}}|G_{v_0}(1+it,\chi)|^2|H_{v_0}(1+it,\chi)|^2dt\Bigg)\\
&+\frac{1}{Q_0\log^{2+\eta}X}\left(\frac{qT\log X}{X}+\frac{q}{\varphi(q)}\right)\Bigg)
\end{align*}
for some $\eta=\eta(\varepsilon)>0$ and some suitable integer $v_0\in I$. We apply Lemma \ref{lem:T1integral} to bound the contribution of the integral over $\mathcal{T}_{1,\chi}$, finding that the above is bounded by
\begin{align*}
\ll \sum_{q\leq Q_0} \Bigg(\frac{qU^2\log^2P}{\varphi(q)}\sum_{\chi(q)}\int_{[T_0,T]\setminus\mathcal{T}_{1,\chi}} & |G_{v_0}(1+it,\chi)|^2|H_{v_0}(1+it,\chi)|^2dt\\
&+\frac{1}{Q_0\log^{2+\eta}X}\left(\frac{qTP\log X}{X}+\frac{q}{\varphi(q)}\right)\Bigg).
\end{align*}
Combining Lemmas \ref{lem:T2integral} to \ref{lem:dirichletcontribution},  we bound the contribution of the complement of $\mathcal{S}_1$ by
\[
\ll \frac{U^2\log^2P}{\log^EX}
\ll\frac{1}{\log^FX}
\]
for some sufficiently large $F>0$, which is negligible. Thus, we have that
\[
\sum_{q\leq Q_0}\frac{q}{\varphi(q)}\sum_{\chi(q)}\int_{T_0}^T|F(1+it,\chi)|^2dt
\ll \frac{1}{Q_0\log^{2+\eta}X} \sum_{q\leq Q_0} \left(\frac{qTP\log X}{X}+\frac{q}{\varphi(q)}\right),
\]
as required.
\end{proof}

\section{Proof of Theorem \texorpdfstring{\ref{thm:unrestricted}}{1.3}}
\label{sec:unrestricted}

We now briefly outline how to adjust the argument to prove Theorem \ref{thm:unrestricted}. The problem can be reduced to the set of $E_2$ numbers which factorise in the ``typical" way. By Mertens' theorem, almost all products of exactly two primes $p_1p_2\leq X$ with $p_1\leq p_2$ satisfy 
\begin{equation}\label{eq:typicalap}
p_1\in\left[\exp\left((\log X)^{\varepsilon(X)}\right),\exp\left((\log X)^{1-\varepsilon(X)}\right)\right]=:[P_1,P_2],
\end{equation}
where $\varepsilon(X)=o(1)$. We define $E''_2:=E''_2(X)$ to be the set of $E_2$ numbers $n=p_1p_2\in(X,2X]$ which factorise in the typical way. Using a sieve theory argument, we have that
\[
\begin{aligned}
\frac{1}{X}\sum_{X<n\le2X}\mathbbm{1}_{E_2}(n)\mathbbm{1}_{E_2}(n+h)-o\left(\frac{\mathfrak{S}(h)(\log\log X)^2}{(\log X)^2}\right)&\le \frac{1}{X}\sum_{X<n\le 2X}\mathbbm{1}_{E''_2}(n)\mathbbm{1}_{E''_2}(n+h)\\
&\le \frac{1}{X}\sum_{X<n\le2X}\mathbbm{1}_{E_2}(n)\mathbbm{1}_{E_2}(n+h).
\end{aligned}
\]
Therefore, we can reduce the problem to considering the correlations of $n,n+h\in E''_2$.

We modify every definition featuring $(P,P^{1+\delta}]$, replacing this interval with $[P_1,P_2]$. We will once again apply the Hardy-Littlewood circle method and in \eqref{eq:majordef} and \eqref{eq:minordef} we take
\begin{equation}\label{eq:unrestrictedparam}
Q_0:=\log^{A'} X, A'>4,\qquad Q:=P_2\log^CX,\qquad H\geq Q\log^DX,
\end{equation}
where $C$ is chosen sufficiently large in terms of $A'$ and $D$ is chosen sufficiently large in terms of $A'$ and $C$. In Lemma \ref{lem:factorisedFLemma} we instead define $I:=[U\log P_1,U\log P_2]$ where $U:=Q_0^{E}$, $E>0$ and we define $\alpha_1:=\varepsilon'>0$ sufficiently small in terms of $\varepsilon>0$. 

The applications of the Cauchy-Schwarz inequality to sums over products of exactly two primes in the proofs of Proposition \ref{prop:minorestimate} and Proposition \ref{prop:B2Xbound} are now too inefficient. To overcome this, we split the sum over $p_1\in[P_1,P_2]$ into dyadic intervals before applying the inequality. We now outline how to modify the proof of Proposition \ref{prop:minorestimate}.

\begin{prop}\label{prop:unrestrictedminors}
Let $A>3$, $B>1$ be fixed and $\mathfrak{m}$ be defined as in \eqref{eq:minordef} with $Q_0,Q$ as in \eqref{eq:unrestrictedparam}. Let $Q\log^DX\leq H\leq X\log^{-A}X$ with $D>0$ sufficiently large.  For $\alpha\in\mathfrak{m}$ we have that
\[
\int_{\mathfrak{m}\cap[\alpha-\frac{1}{2H},\alpha+\frac{1}{2H}]}|S(\theta)|^2d\theta
\ll\frac{X}{\log^BX}.
\]
\end{prop}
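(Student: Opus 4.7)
The plan is to adapt the proof of \cref{prop:minorestimate}, with the key modification being a dyadic decomposition of the sum over the smaller prime factor carried out \emph{before} Cauchy--Schwarz. As in \cref{prop:minorestimate}, I first substitute $\theta=\alpha+\beta$ and apply \cref{lem:gallagherslemma} to reduce the problem to bounding
\[
\frac{1}{H^2}\int_X^{2X}\Bigg|\sum_{\substack{x<p_1p_2\le x+H\\P_1\le p_1\le P_2}}e(\alpha p_1p_2)\Bigg|^2dx,
\]
plus a negligible term $H\log^2X$, which is acceptable given $H\le X\log^{-A}X$ and $B$ fixed.

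Next, I partition $[P_1,P_2]$ into $K=O(\log(P_2/P_1))=O((\log X)^{1-o(1)})$ dyadic ranges $(R,2R]$ and write the inner sum as $\sum_R f_R(x,\alpha)$ with
\[
f_R(x,\alpha):=\sum_{\substack{x<p_1p_2\le x+H\\R<p_1\le 2R}}e(\alpha p_1p_2).
\]
By Cauchy--Schwarz over the dyadic levels, $|\sum_R f_R|^2\le K\sum_R |f_R|^2$. Within each dyadic range I then apply Cauchy--Schwarz over $p_1$ as in \cref{prop:minorestimate}, but now the first factor is only $\ll R/\log R$ instead of $P_2/\log P_2$. Following the same manipulation (trivially extending the range of integration and identifying the diagonal and off-diagonal contributions through $|\Omega|=H-m|p_1-p_2|$) gives, for each $R$, a diagonal contribution $S_1(R)\ll XR\log(X/R)/H$ and an off-diagonal contribution bounded using \cref{lem:expsumbound} on the inner $m$-sum, \cref{lem:primepairs} on the prime-pair sum, and finally \cref{lem:distancesum} on the resulting $\min(H/r,1/\|\alpha r\|)$ sum.

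Summing over the $K$ dyadic levels, the diagonal contribution is $\ll KXP_2\log X/H$, which, by the hypothesis $H\ge Q\log^DX=P_2\log^{C+D}X$ with $D$ large, is $\ll X/\log^BX$ as required; similarly, the off-diagonal contribution becomes $\ll KXP_2^{\varepsilon'}H^{-1}(H/Q_0+H/P_1+Q)\log(QH/P_1)$, and the three terms are each $O(X/\log^BX)$ by our choices $Q_0=\log^{A'}X$ with $A'>4$, the trivial bound $H/P_1\le H$ combined with $P_1\ge\exp((\log X)^{\varepsilon(X)})$, and $Q/H\le\log^{-D}X$.

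The main obstacle is the large dyadic multiplicity $K\sim(\log X)^{1-o(1)}$, which dwarfs the usual $\log\log X$ and renders a direct Cauchy--Schwarz argument ineffective. The remedy built into the statement is precisely to require $H\ge Q\log^D X$ with $D$ sufficiently large in terms of $A'$, $B$ and $C$, which provides enough slack in the $H$-denominator to absorb $K$ and still obtain $X/\log^B X$; everything else is a routine transcription of the argument in \cref{sec:Minors}.
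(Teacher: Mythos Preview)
Your approach is the same as the paper's: dyadically decompose $[P_1,P_2]$ before applying Cauchy--Schwarz over $p_1$, then sum the $K\ll\log X$ dyadic contributions (the paper's ``combining the contributions of the dyadic intervals'' is precisely your Cauchy--Schwarz over the dyadic levels). Two slips: you dropped the weight $\log p_2$ that $\varpi_2$ carries, which is harmless; more seriously, your off-diagonal bound contains a factor $P_2^{\varepsilon'}$ that should not be there. In the dyadic setting the ratio of the interval $(R,2R]$ is $2$, so the $P^\delta$ appearing in \eqref{eq:minorsoffdiag} becomes $O(1)$, not $P_2^{\varepsilon'}$. As written, $P_2^{\varepsilon'}=\exp\bigl(\varepsilon'(\log X)^{1-o(1)}\bigr)$ dwarfs every power of $\log X$, and your claimed $O(X/\log^B X)$ would fail already for the $1/Q_0$ term. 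Once this spurious factor is removed (and, as in the paper, one records the helpful $1/\log R$ coming from the first Cauchy--Schwarz factor $R/\log R$), the argument goes through with the stated choices $A'>4$, $P_1\ge\exp\bigl((\log X)^{\varepsilon(X)}\bigr)$, and $Q/H\le\log^{-D}X$.
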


\begin{proof}
As before, we apply Lemma \ref{lem:gallagherslemma} to the minor arc integral so that we need to bound
\[
I\ll\frac{1}{H^2}\int_{X}^{2X}\left|\sum_{x<n\leq x+H}\varpi_2(n)e(n\alpha)\right|^2dx+H\log^2 X.
\]
The second term is acceptable by our choice of $H$, so it remains to bound the first term. Now before applying Cauchy-Schwarz to the integrand we split the sum over $p_1$ into dyadic intervals $[P,2P]$ with $P_1\leq P\leq P_2$ so that we instead need to integrate
\[
\Bigg(\sum_{P<m_1\leq 2P}|\primecf(m_1)|^2\Bigg)\Bigg(\sum_{P<m_2\leq 2P}\Bigg|\sum_{x<m_2p\leq x+H}(\log p)e(\alpha m_2 p)\Bigg|^2\Bigg).
\]
The first term is $\ll \frac{P}{\log P}$, while the second term is equal to
\[
\sum_{\substack{x<mp_1,mp_2\leq x+H \\ P<m\leq 2P}} (\log p_1)(\log p_2) e(\alpha m(p_1-p_2)).
\]
Next, we perform the integration on this sum and split into the diagonal ($p_1=p_2$) and off-diagonal terms ($p_1\neq p_2$) as before. The diagonal terms now contribute
\begin{equation}\label{eq:unrestricteddiag}
S_1\ll \frac{P}{H\log P}\sum_{P<m\leq 2P}\sum_{\frac{X}{m}<p\leq\frac{3X}{m}}\log^2 p
\ll \frac{XP\log X}{H\log P}
\ll\frac{X}{\log^{C+D-1}X}.
\end{equation}
Once again applying Lemma \ref{lem:expsumbound} followed by Lemma \ref{lem:primepairs} and Lemma \ref{lem:distancesum}, the off-diagonal terms contribute
\begin{equation}\label{eq:unrestrictedoffdiag}
S_2\ll\frac{X\log\log X \log X}{\log P}\left(\frac{1}{Q_0}+\frac{1}{P}+\frac{Q}{H}\right)
\ll\frac{X}{\log^{B'}X}
\end{equation}
for $B'>3$ by our choice of $Q_0,P,Q$ and $H$. Combining the contributions of the dyadic intervals $[P,2P]$ gives that
\[
I\ll\frac{X}{\log^BX}
\]
for $B>1$, as claimed.
\end{proof}

\begin{prop}\label{prop:unrestrictedmajor}
Let $A>3,B>0$ be fixed. Let $\varepsilon>0$ be fixed and $\exp((\log X)^{1-\varepsilon})\leq H\leq X\log^{-A}X$. Let $\mathfrak{M}$ be defined as in \eqref{eq:majordef} with $Q_0,Q$ as in \eqref{eq:unrestrictedparam}. Then, for all but at most $O(HQ_0^{-1/3})$ values of $0<|h|\leq H$ we have that
\[
\int_\mathfrak{M} |S(\alpha)|^2e(-h\alpha) d\alpha=\mathfrak{S}(h)X\left(\sum_{P_1\leq p\leq P_2}\frac{1}{p}\right)^2+ O\left(\frac{X}{\log^B X}\right).
\]
\end{prop}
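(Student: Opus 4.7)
The plan is to mirror the proof of \cref{prop:majorestimate} in the new setting, replacing the interval $(P,P^{1+\delta}]$ throughout by $[P_1,P_2]$ and using the modified parameters from \eqref{eq:unrestrictedparam}. First I would expand the exponential sum $S(\alpha)$ in Dirichlet characters exactly as in \cref{lem:ExpandS}, writing $S=a+b$ where $a$ is the principal-character approximation with $\sum_{P_1\leq p\leq P_2}1/p$ in place of $\sum_{P<p\leq P^{1+\delta}}1/p$, and $b$ is the complementary term. Expanding the square and applying Cauchy--Schwarz then reduces the problem to evaluating $\int_{\mathfrak{M}}|a|^2e(-h\alpha)\,d\alpha$ and bounding $B^2(X)=\int_{\mathfrak{M}}|b|^2\,d\alpha$.

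The main-term evaluation goes through verbatim as in the proof of \cref{prop:mainterm}: the inner $\beta$-integral gives $X+O(qQ+H)$, summing $\mu^2(q)c_q(-h)/\varphi^2(q)$ over $q\leq Q_0$ produces the singular series $\mathfrak{S}(h)$ via \cref{lem:singularseries} for all but $O(HQ_0^{-1/3})$ of the shifts, and the errors $qQ+H$ summed against $1/\varphi^2(q)$ are admissible by the choice $Q=P_2\log^C X$ together with $H\leq X\log^{-A}X$ (with $C$ large in terms of $A'$).

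The heart of the proof is the bound $B^2(X)\ll X/\log^B X$. Using Gallagher's Lemma I would decompose $B^2\ll B_1+B_2$ exactly as in \cref{prop:B2decomposition}. The crucial modification is that the Cauchy--Schwarz step applied to the full sum over $p_1\in[P_1,P_2]$ (which appears in both \cref{prop:B1Xbound,prop:B2Xbound}) is catastrophically lossy here, since $P_2=\exp((\log X)^{1-o(1)})$ is not a power of $\log X$. To repair this I would dyadically partition $[P_1,P_2]$ into $O(\log X/\log\log X)$ ranges $[P,2P]$ and apply Cauchy--Schwarz \emph{inside} each range, paying only a factor $P/\log P$; summing the contributions over the dyadic ranges then costs at most a single $\log X$ factor. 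For $B_2$ this immediately opens the way to applying \cref{lem:primesalmostall} range by range. For $B_1$, one runs the Dirichlet-polynomial argument of \cref{sec:PrelimDP,sec:mvdp} on each dyadic factorisation, with the modified choice $\alpha_1=\varepsilon'$ and $U=Q_0^E$ indicated in \cref{sec:unrestricted}.

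The main obstacle will be verifying that each dyadic piece of $B_1$ saves enough in powers of $\log X$ so that the sum of the $O(\log X/\log\log X)$ contributions still fits inside $O(X/\log^B X)$. Every appearance of $\log^{-\eta}X$ in the proofs of \cref{lem:T1integral,lem:T2integral,lem:Tintegral,lem:dirichletcontribution} must be re-examined with the new $\alpha_1=\varepsilon'$ and the dyadically varying $P$, and the twisted fourth-moment and large-value inputs (\cref{lem:fourthmoment,lem:Jutilalarge,lem:largevalues}) must be reapplied with lengths $N_1,N_2$ computed from the current dyadic $P$ rather than from the fixed $P$ of \cref{sec:setup}. Since $P\geq P_1=\exp((\log X)^{\varepsilon(X)})$ grows faster than any fixed power of $\log X$, the pointwise-small and large-value estimates are in fact stronger than in the restricted setting; the larger exponents $A'$, $C$ and $D$ in \eqref{eq:unrestrictedparam} are chosen precisely to absorb the extra logarithmic losses introduced by the dyadic sum, which completes the proof once the range-by-range bookkeeping is carried out.
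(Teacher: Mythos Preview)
Your plan matches the paper's proof for the main-term evaluation and for $B_2(X)$: the dyadic splitting of $[P_1,P_2]$, Cauchy--Schwarz inside each block, and then \cref{lem:primesalmostall} is exactly what the paper does there.

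For $B_1(X)$, however, you have a small misconception. There is no Cauchy--Schwarz over $p_1$ in the proof of \cref{prop:B1Xbound}: that argument passes directly to the Parseval bound \cref{lem:dpbound} and then to \cref{prop:Fmvtbound}, whose factorisation step (\cref{lem:factoriseF}, \cref{lem:factorisedFLemma}) already splits the $p_1$-range into the pieces $G_v$ indexed by $v\in I$. Accordingly the paper does \emph{not} introduce your extra dyadic layer for $B_1$; it simply redefines $I=[U\log P_1, U\log P_2]$, sets $\alpha_1=\varepsilon'$ and $U=Q_0^E$, and reruns \cref{prop:B1Xbound,prop:Fmvtbound} with these new parameters. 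Your proposed additional dyadic decomposition for $B_1$ would not be wrong, but it is redundant: the enlarged $|I|$ is harmless because now $e^{v_0/U}\ge P_1=\exp\bigl((\log X)^{\varepsilon(X)}\bigr)$, so the pointwise saving $e^{-2\alpha_1 v_0/U}$ in \cref{lem:T1integral} (and likewise the factors $M_1^{1/B+2\alpha_1-2\alpha_2}$, etc., in the remaining lemmas) beat any fixed power of $\log X$. That is why the paper can assert the $B_1$ bound after only a change of parameters, reserving the dyadic repair exclusively for $B_2$.
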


\begin{proof}
Recalling Lemma \ref{lem:ExpandS}, we have the expansion
\begin{equation}\label{eq:Sabexpand}
\begin{aligned}
S(\alpha)=&\frac{\mu(q)}{\varphi(q)}\sum_{P_1<p\leq P_2}\frac{1}{p}\sum_{X<n\leq2X}e(\beta n)\\
&+\frac{1}{\varphi(q)}\sum_{\chi(q)}\tau(\overline{\chi})\chi(a)\sum_{X<n\leq2X}\left(\varpi_2(n)\chi(n)-\delta_\chi\sum_{P_1\leq p\leq P_2}\frac{1}{p}\right)e(\beta n)\\
=&a(\alpha)+b(\alpha)
\end{aligned}
\end{equation}
and following the argument of Section \ref{sec:Majors} we have that
\[
\int_\mathfrak{M} |S(\alpha)|^2e(-h\alpha) d\alpha=\mathfrak{S}(h)X\left(\sum_{P_1\leq p\leq P_2}\frac{1}{p}\right)^2+O\left(\frac{X}{\log^BX}+A(X)B(X)+B^2(X)\right).
\]
Note that $A^2(X)\ll X(\log\log X)^3$, so it remains to bound $B^2(X)$. Following Proposition \ref{prop:B2decomposition}, we have that $B^2(X)\ll B_1(X)+B_2(X)$ where $B_1(X)$ is defined in \eqref{eq:B1def} and $B_2(X)$ is defined in \eqref{eq:B2def} with $(P,P^{1+\delta}]$ replaced with $[P_1,P_2]$. With our choices of \eqref{eq:unrestrictedparam} and $P_1,P_2$, following the arguments of Proposition \ref{prop:B1Xbound} and Proposition \ref{prop:Fmvtbound} we now have that
\[
B_1(X)\ll\frac{X}{\log ^BX}.
\]

The proof of Proposition \ref{prop:B2Xbound} requires modifying in a similar way to Proposition \ref{prop:minorestimate}. In \eqref{eq:B2CS} we split the sum over $P_1\leq p_1\leq P_2$ into dyadic intervals $P<p_1\leq 2P$ before applying Cauchy-Schwarz, Lemma \ref{lem:primesalmostall} and then combining the contributions of the dyadic sums.
\end{proof}

We are now able to complete the proof of Theorem \ref{thm:unrestricted}.

\begin{proof}[Proof of Theorem \ref{thm:unrestricted}.]
By partial summation and Mertens' theorem we have the bound
\[
\begin{aligned}
\int_0^1|S(\alpha)|^2d\alpha
&=\sum_{X<n\leq2X}\varpi_2^2(n)
\ll \log X\sum_{X<n\leq2X}\varpi_2(n)
\ll X\log X\sum_{P_1\leq p\leq P_2}\frac{1}{p}\\
&\ll X\log X \log\log X.
\end{aligned}
\]
Therefore, following the proof of Theorem \ref{thm:mainthm}, the result may be deduced from combining this bound with Proposition \ref{prop:unrestrictedminors}, an application of Chebyshev's inequality and Proposition \ref{prop:unrestrictedmajor} followed by an application of partial summation.
\end{proof}

\section{Proof of Theorem \texorpdfstring{\ref{thm:primealmost}}{1.4}}
\label{sec:primealmost}

We outline the modifications needed to prove Theorem \ref{thm:primealmost}. When applying the Hardy-Littlewood circle method, in \eqref{eq:majordef} and \eqref{eq:minordef} we now choose 
\begin{equation}\label{eq:papparam}
Q_0:=\log^{A'}X, A'>6, \qquad Q:=X^{1/6+\varepsilon/2},\qquad H\geq QX^{\varepsilon/2}.
\end{equation}
As in Section \ref{sec:unrestricted},  in Lemma \ref{lem:factorisedFLemma} we instead define $I:=[U\log P_1,U\log P_2]$ where $U:=Q_0^{E}$, $E>0$ and we define $\alpha_1:=\varepsilon'>0$ sufficiently small in terms of $\varepsilon>0$. Analogously to the almost prime case, we may write
\[
\sum_{X<n\leq2X}\Lambda(n)\varpi_2(n+h)=\int_0^1 S(\alpha)\overline{S'(\alpha)}e(-h\alpha)d\alpha+O(h\log^2X),
\]
where for $\alpha\in(0,1)$ we define the exponential sum $S'(\alpha):=\sum_{X<n\leq 2X}\Lambda(n)e(n\alpha)$. The error term is acceptable by our choice of $H$.  We have the following result for the major arcs.

\begin{prop}\label{prop:papmajor}
Let $A>5,B>0$ be fixed and let $\varepsilon>0$ be fixed sufficiently small. Let $X^{1/6+\varepsilon}\leq H\leq X\log^{-A}X$. Let $\mathfrak{M}$ be defined as in \eqref{eq:majordef} with $Q_0,Q$ as in \eqref{eq:papparam}. Then, for all but at most $O(HQ_0^{-1/3})$ values of $0<|h|\leq H$ we have that
\[
\int_\mathfrak{M} S(\alpha)\overline{S'(\alpha)}e(-h\alpha) d\alpha=\mathfrak{S}(h)X\left(\sum_{P_1\leq p\leq P_2}\frac{1}{p}\right)+ O\left(\frac{X}{\log^B X}\right).
\]
\end{prop}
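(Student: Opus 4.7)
The plan is to mirror the proof of \cref{prop:majorestimate} (and its unrestricted variant in \cref{prop:unrestrictedmajor}), but with one of the two exponential sums replaced by the prime exponential sum $S'(\alpha)$. First I would write, for $\alpha=\tfrac{a}{q}+\beta\in\mathfrak{M}$, the Dirichlet character expansions $S(\alpha)=a(\alpha)+b(\alpha)$ as in \eqref{eq:Sabexpand} (with $[P_1,P_2]$ in place of $(P,P^{1+\delta}]$) and, analogously,
\[
S'(\alpha)=a'(\alpha)+b'(\alpha),
\]
where
\[
a'(\alpha):=\frac{\mu(q)}{\varphi(q)}\sum_{X<n\leq2X}e(\beta n),\qquad b'(\alpha):=\frac{1}{\varphi(q)}\sum_{\chi(q)}\tau(\overline\chi)\chi(a)\sum_{X<n\leq2X}(\Lambda(n)\chi(n)-\delta_\chi)e(\beta n).
\]
Expanding the product and applying Cauchy--Schwarz gives
\[
\int_{\mathfrak{M}}S(\alpha)\overline{S'(\alpha)}e(-h\alpha)d\alpha=\int_{\mathfrak{M}}a(\alpha)\overline{a'(\alpha)}e(-h\alpha)d\alpha+O\bigl(A(X)B'(X)+A'(X)B(X)+B(X)B'(X)\bigr),
\]
where $A(X),B(X)$ are as in \cref{sec:Majors} (with $[P_1,P_2]$) and $A'(X),B'(X)$ are defined in the obvious way from $a',b'$.

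For the main term, I would repeat the computation in the proof of \cref{prop:mainterm}: expanding the square of the geometric sums yields
\[
\int_{\mathfrak{M}}a(\alpha)\overline{a'(\alpha)}e(-h\alpha)d\alpha=\left(\sum_{P_1\leq p\leq P_2}\frac{1}{p}\right)\sum_{q\leq Q_0}\frac{\mu^2(q)c_q(-h)}{\varphi^2(q)}\bigl(X+O(qQ+H)\bigr),
\]
and then \cref{lem:singularseries} together with the choices $Q_0=\log^{A'}X$, $Q=X^{1/6+\varepsilon/2}$, $H\leq X\log^{-A}X$ gives the claimed singular series main term for all but $O(HQ_0^{-1/3})$ values of $h$. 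The trivial bounds $A(X),A'(X)\ll X^{1/2}(\log\log X)^{O(1)}$ follow directly from Parseval applied to $a,a'$ on the major arcs.

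The remaining task is to control the three cross terms, which reduces to showing sufficient cancellation in $B^2(X)$ and $B'^2(X)$. For $B^2(X)$, the argument of \cref{sec:MajorError,sec:PrelimDP,sec:mvdp}, adapted as in \cref{sec:unrestricted} to the wider range $[P_1,P_2]$ (dyadic decomposition of the sum over $p_1$ before Cauchy--Schwarz in both \cref{prop:B2Xbound} and the factorisation lemma, with the parameter choices from \cref{sec:papparam}), yields $B^2(X)\ll X\log^{-C}X$ for any $C>0$; this is where the choice $A'>6$ and $Q=X^{1/6+\varepsilon/2}$ matters, since the Parseval bound in \cref{lem:dpbound} is applied with $h_1=qQ/2$ and requires $T_0^3\leq X/h_1$. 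For $B'^2(X)$, the same reduction via \cref{lem:gallagherslemma} and the second term of \eqref{eq:heuristicmajor} leaves a mean square over characters of primes in intervals of length $q\Delta\gg X^{1/6+\varepsilon}$, which is precisely the input provided by \cref{lem:primesalmostall} and (for the first term of \eqref{eq:heuristicmajor}) the arguments of Mikawa \cite{mikawa}.

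The main obstacle is bounding $B'(X)$: unlike the almost prime factor, the prime sum lacks the bilinear structure that allowed us to push past $X^{1/6}$ in the minor arcs, and Mikawa's machinery needs $qQ\gg X^{1/6+\varepsilon}$ to give $B'^2(X)\ll X\log^{-B}X$. This is what forces the threshold $H\geq QX^{\varepsilon/2}\geq X^{1/6+\varepsilon}$ in \eqref{eq:papparam}, and once this threshold is met the product $B(X)B'(X)\ll X\log^{-B}X$ is acceptable, while $A(X)B'(X)$ and $A'(X)B(X)$ are handled by combining the trivial bound on one factor with the non-trivial bound on the other (with the stronger savings in $B^2(X)$ compensating for the weaker ones available in $B'^2(X)$). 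The requirement $A>5$ arises from tracking the losses in $A'(X)$ through this Cauchy--Schwarz step.
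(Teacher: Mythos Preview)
Your proposal is correct and follows essentially the same route as the paper: expand both $S$ and $S'$ in Dirichlet characters, isolate the main term $\int_{\mathfrak M}a(\alpha)\overline{a'(\alpha)}e(-h\alpha)\,d\alpha$ and evaluate it via \cref{lem:singularseries}, and control the cross terms by bounding $B^2(X)$ (almost-prime side, via \cref{sec:MajorError,sec:PrelimDP,sec:mvdp} adapted as in \cref{sec:unrestricted}) and $B'^2(X)$ (prime side, via \cref{lem:primesalmostall}).

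Two small corrections to your explanations. First, for $B'^2(X)$ the paper does not perform a $B_1+B_2$--type split: after Gallagher's reduction (which is what is quoted from Mikawa) the resulting interval already has length $qQ/2\geq X^{1/6+\varepsilon/2}/2$, so \cref{lem:primesalmostall} applies directly. This is the genuine reason for the choice $Q=X^{1/6+\varepsilon/2}$; your remark that it ``matters'' for the Parseval step in $B^2(X)$ is misplaced, since with $T_0=X^{1/100}$ the hypothesis $h_1\leq X/T_0^3$ of \cref{lem:dpbound} is satisfied with enormous room to spare and is not a binding constraint. Second, you have omitted the $O(\log^2X)$ error in the character expansion of $S'(\alpha)$ coming from $n\in(X,2X]$ with $\Lambda(n)\neq 0$ and $(n,q)>1$; the paper records this explicitly (hence the extra $(A(X)+B(X))\log^2X$ in the error), though of course it is harmless.
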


\begin{proof}
We can expand $S'$ in terms of Dirichlet characters (see for example \cite{mikawa}):
\begin{align*}
S'(\alpha)=&\frac{\mu(q)}{\varphi(q)}\sum_{X<n\leq2X}e(n\beta)+\frac{1}{\varphi(q)}\sum_{\chi(q)}\tau(\bar{\chi})\chi(a)\sum_{X<n\leq2X}(\Lambda(n)\chi(n)-\delta_\chi)e(n\beta)
+O(\log^2X)\\
=&c(\alpha)+d(\alpha)+O(\log^2X),
\end{align*}
say. Therefore, using the expansion \eqref{eq:Sabexpand} and Cauchy-Schwarz, we may write the integral over the major arcs as
\begin{equation}\label{eq:primeapexpand}
\begin{aligned}
\int_{\mathfrak{M}}S(\alpha)\overline{S'(\alpha)}e(-h\alpha)d\alpha
=&\int_\mathfrak{M}a(\alpha)\overline{c(\alpha)}e(-h\alpha)d\alpha\\
&+O\big(A(X)D(X)+B(X)(C(X)+D(X))+(A(X)+B(X))\log^2X\big),
\end{aligned}
\end{equation}
where we define $C^2(X)=\int_\mathfrak{M}|c(\alpha)|^2d\alpha$ with $D(X)$ defined analogously. Evaluating $\int_{\mathfrak{M}}a(\alpha)\overline{c(\alpha)}e(-h\alpha)d\alpha$ as in Section 5 gives the required main term and an acceptable error. Mikawa \cite[Section 3]{mikawa} proves that $C^2(X)\ll X\log\log X$ and that
\[
D^2(X)\ll\sum_{q\leq Q_0}\frac{q}{\varphi(q)}\sum_{\chi(q)}\Bigg(\int_X^{2X}\Bigg|\frac{1}{qQ}\sum_{x<n\leq x+qQ/2}(\Lambda(n)\chi(n)-\delta_\chi)\Bigg|^2dx+qQ\log^2X\Bigg).
\] 
The second term is negligible by the definition of $Q$. Noting that we have chosen $Q=X^{1/6+\varepsilon/2}$, we apply Lemma \ref{lem:primesalmostall} to the first term to get 
\[
D^2(X)
\ll\frac{X}{\log^BX}
\]
for $B>0$, as required. Combining this with our estimates for $A(X),B(X)$ (from Proposition \ref{prop:unrestrictedmajor}) and $C(X)$ we have that the error term in \eqref{eq:primeapexpand} is $O(X\log^{-B}X)$, as required.
\end{proof}

\begin{proof}[Proof of Theorem \ref{thm:primealmost}.]
Analogously to the proof of Theorem \ref{thm:weightedmainthm}, by \cite[Proposition 3.1]{mrt} we have that
\begin{align*}
\sum_{0<|h|\leq H}&\left|\sum_{X<n\leq2X}\Lambda(n)\varpi_2(n+h)-\int_{\mathfrak{M}}S(\alpha)\overline{S'(\alpha)}e(-h\alpha)d\alpha\right|^2\\
&\ll H\int_\mathfrak{m}|S(\alpha)||S'(\alpha)|\int_{\mathfrak{m}\cap[\alpha-\frac{1}{2H},\alpha+\frac{1}{2H}]}|S(\beta)||S'(\beta)|d\beta d\alpha.
\end{align*}
By Cauchy-Schwarz, we have that the above is bounded by
\[
\ll H\left(\int_0^1|S'(\alpha)|^2d\alpha\right)\left(\int_0^1|S(\alpha)|^2d\alpha\right)^{1/2}\left(\sup_{\alpha\in\mathfrak{m}}\int_{\mathfrak{m}\cap[\alpha-\frac{1}{2H},\alpha+\frac{1}{2H}]}|S(\beta)|^2d\beta\right)^{1/2}.
\]
Trivially, we have that
\[
\int_0^1|S(\alpha)|^2d\alpha
\ll X\log X\log\log X,\qquad
\int_0^1|S'(\alpha)|^2d\alpha
\ll X\log X,
\]
so, combining these estimates with Proposition \ref{prop:unrestrictedminors} (suitably adjusting for the choices of $Q_0,Q,H$), we have that
\[
\sum_{0<|h|\leq H}\left|\sum_{X<n\leq2X}\Lambda(n)\varpi_2(n+h)-\int_{\mathfrak{M}}S(\alpha)\overline{S'(\alpha)}e(-h\alpha)d\alpha\right|^2\ll\frac{HX^2}{\log^BX}.
\]
Therefore, applying Chebyshev's inequality and Proposition \ref{prop:papmajor} followed by partial summation gives the result.
\end{proof}

\section*{Acknowledgements}
The author is grateful to her supervisor Stephen Lester for suggesting the problem and for many helpful comments and discussions throughout this work. The author would like to thank Joni Ter\"av\"ainen for useful comments on an earlier draft of this article, which included Theorem \ref{thm:unrestricted} and its proof and an improvement on the log exponent in Theorem \ref{thm:mainthm}. The author would also like to thank Kaisa Matom\"aki for helpful comments on an earlier draft. She thanks the anonymous referee for a careful reading of the paper and useful comments. This work was supported by the Engineering and Physical Sciences Research Council [EP/R513106/1].

\bibliographystyle{abbrv}
\bibliography{AlmostPrimesBib}

\end{document}